\def\eg{\emph{e.g.}\xspace}
\def\ie{\emph{i.e.}\xspace}
\newcommand{\bea}{\begin{eqnarray}}
\newcommand{\eea}{\end{eqnarray}}
\newcommand{\beas}{\begin{eqnarray*}}
\newcommand{\eeas}{\end{eqnarray*}}
\newcommand{\be}{\begin{equation}}
\newcommand{\ee}{\end{equation}}
\newcommand{\ba}{\begin{array}}
\newcommand{\ea}{\end{array}}
\newcommand{\cV}{{\cal{V}}}
\newcommand{\cE}{{\cal{E}}}
\title{Regular colored graphs of positive degree}
\author{Razvan Gurau$^{a}$ and Gilles Schaeffer$^{b}$
\thanks{GS acknowledge the support of 
ERC via Research Starting Grant 208471 {ExploreMaps}.}}
\institute{   (a) CPHT, CNRS, UMR 7644, Ecole Polytechnique, rgurau@cpht.polytechnique.fr \\
              (b) LIX, CNRS, UMR 7161, Ecole Polytechnique, schaeffe@lix.polytechnique.fr}
\begin{document}
\maketitle
\begin{abstract}
  Regular colored graphs are dual representations of pure colored
  $D$-dimensional complexes. These graphs can be classified with
  respect to a positive integer, their degree, much like maps are
  characterized by the genus.  We analyze the structure of regular
  colored graphs of fixed degree and perform their exact and
  asymptotic enumeration.  In particular we show that the generating
  function of the family of graphs of fixed degree is an algebraic
  series with a positive radius of convergence, independent of the
  degree. We describe the singular behavior of this series near its
  dominant singularity, and use the results to establish the double
  scaling limit of colored tensor models: interestingly the behavior
   is qualitatively very different for $3\leq D\leq5$ and for $D\geq6$.
\end{abstract} 

\section{Introduction}
\subsubsection{Context.}
In this article a \emph{colored graph} is a rooted connected bipartite graph such that
each edge has a color in $\{0,1,\dots, D\}$ and each vertex is
incident to exactly one edge of each color.  Colored graphs appear
naturally in the crystallization theory of manifolds \cite{Lins} and
in colored tensor models \cite{color} (or colored group field
theory). They are dual to colored triangulations of piecewise linear
orientable $(D+1)$-dimensional pseudo-manifolds \cite{FG,lost}.
Although not all $(D+1)$-triangulations can be properly colored,
colored graphs are fundamental because any orientable topological
manifold in any dimension admits a colored triangulation
\cite{pezzana} and any triangulation in any dimension can be
transformed into a colored triangulation by a barycentric
subdivision.

To each colored graph is associated an invariant, its {\it
  degree} \cite{1overN}, which is a non negative integer. 
  For $D=2$ the degree reduces to the genus of
the dual (2-dimensional) triangulation. Unlike the genus however, the
degree is not a topological invariant of the dual
pseudo-manifold for $D\geq3$.  Be that as it may, classifying graphs
in terms of the degree offers a first rough classification of
triangulations of pseudo-manifolds in any dimension. It also plays a
distinctive role in tensor models, where this classification allows
access to subsequent orders in their $1/N$ expansion, as this
expansion is indexed by the degree (exactly like the $1/N$ expansion
of matrix models is indexed by the genus).

\subsubsection{Our results.}

Our main result is a structural analysis of rooted colored graphs of
fixed degree, which yields on the one hand an exact and an
asymptotic enumeration of these graphs, and on the other hand leads to the
construction of the double scaling limit of colored tensor models. 
  
The structural analysis we perform relies on the reduction of colored
graphs via a precise algorithm to some terminal forms of the same
degree, which we call \emph{reduced schemes}. The number of reduced schemes
of a given degree is finite and the number of graphs sharing a scheme
is exponentially bounded. More precisely we show: 
\begin{theorem}\label{thm:main1}
For any fixed dimension $D\geq3$ and degree $\delta\geq 0$, there exist a
finite set $\tilde {\cal S}^0_{\delta}$ of reduced schemes of degree $\delta$ and root edge of color $0$, 
and triples $( P_{\tilde S}(u), {\bf U}_{\tilde S}, {\bf B}_{\tilde S} )_{ \tilde S\in \tilde {\cal S}^0_{\delta} }$
consisting in a monomial and two integer parameters associated to the schemes such that the generating
function of colored graphs of degree $\delta$ rooted at an edge of color $0$ with respect to the number of black vertices is:
  \[
  H^0_{\delta}(z)=T(z)\sum_{ \tilde S \in \tilde {\cal S}^0_{\delta}}
     \left[ \frac{P_{\tilde S}(u)}{ ( 1-u^2 )^{  {\bf U}_{\tilde S} + {\bf B}_{\tilde S} } (1-D^2u^2)^{{\bf B}_{\tilde S}}}  \right]_{u=zT(z)^{D+1}} \;,
  \]
where $T(z)$ is the unique power series solution of the equation:
   \[ 
   T(z)= 1 + zT(z)^{D+1} \; .
   \]
\end{theorem}

Previous classifications in terms of the degree exist \cite{1overN},
but, while the number of terminal forms identified in \cite{1overN} is
finite at fixed degree, there is no control over the number of graphs
associated to a terminal form.  Our approach is instead reminiscent of
the classification of maps of fixed genus performed in
\cite{chapuy-marcus-schaeffer}, or that of simplicial decompositions
of surfaces with boundaries in \cite{bernardi-rue}, and more generally
of Wright's approach to the enumeration of labeled graph with fixed
excess \cite{wright1,wright2}.

From our main theorem we are able to extract the leading terms in the singular
expansion of the generating functions of colored graphs of degree $\delta$. 
\begin{theorem}\label{thm:main2}
For any fixed $D\geq3$ and $\delta\geq 1$, the generating function of colored
graphs of degree $\delta$ has a dominant singularity at
$z_0=D^D/(D+1)^{D+1}$ and a singular expansion in a slit domain
around $z_0$ of the form:
\[
  H^0_{\delta}(z)=K_\delta(1-z/z_0)^{-\frac{ {\bf B}_{\rm max} }{2}}  \left[ 1+ O \left( \sqrt{1-\frac{z}{z_0}} \right) \right] \;,
\] 
where ${\bf B}_{\rm max}$ is the maximum of a simple integer linear program:
\[
{\bf B}_{\rm max}=\max(2 c_+ +3 q-1\mid (D-2)c_+ +Dq \le \delta ; \;\;  c_+, q \in\mathbb{N}) \; .
\]
In particular ${\bf B}_{\rm max}$ roughly grows linearly with $\delta$ and for
fixed $D$ we determine the largest linearity factor
$\max({\bf B}_{\rm max}/\delta)$ and for which $\delta$ it is obtained:
\[
\begin{array}{c|c|c|c|}
& 3\leq D\leq 5 & D=6 & D\geq 7\\\hline 
  \max({\bf B}_{\rm max}/\delta) &
  \frac{2}{D-2}&\frac{2}{D-2}=\frac{3}{D}
  &\frac{3}{D}\\\hline
  \rm{which} \; \delta & \delta = \mathbb{N} \cdot (D-2) & \textrm{all }\delta & \delta = \mathbb{N} \cdot D  \\\hline
\end{array}
\]
Moreover the constants $K_\delta$ have combinatorial interpretations,
  which for $3\leq D\leq 5$ involve Catalan numbers.
\end{theorem}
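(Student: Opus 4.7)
The plan is to apply standard singularity analysis to the closed-form expression of $F_\delta(z)$ given in Theorem~\ref{thm:count}. I would first locate and describe the singularities of the building blocks $T(z)$ and $U(z)=zT(z)^{D+1}=T(z)-1$. A standard application of the implicit function theorem to $T = 1 + zT^{D+1}$ identifies the unique positive dominant singularity as the point where both $T = 1 + zT^{D+1}$ and the derivative condition $1 = z(D+1)T^D$ hold; solving gives $T_0 = (D+1)/D$ and $z_0 = D^D/(D+1)^{D+1}$, hence $U_0 = 1/D$. A Newton--Puiseux expansion then yields
\[
T(z) = T_0 - \tau\,(1-z/z_0)^{1/2} + O(1-z/z_0), \qquad \tau > 0,
\]
from which
\[
1 - DU(z) = D\tau\,(1-z/z_0)^{1/2}\bigl(1+O((1-z/z_0)^{1/2})\bigr),
\]
while $1 - U(z)^2$ stays bounded away from zero near $z_0$ since $U_0 = 1/D \neq 1$ for $D\geq 3$.

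Injecting these local expansions term-by-term into Theorem~\ref{thm:count}, the summand indexed by a scheme $s \in \mathcal{S}_\delta$ contributes, in a slit domain around $z_0$,
\[
\frac{T(z)\,P_s(U(z))}{(1 - U(z)^2)^{a(s)}(1 - DU(z))^{b(s)}} = C_s\,(1-z/z_0)^{-b(s)/2}\bigl(1 + o(1)\bigr),
\]
with $C_s$ an explicit constant obtained by evaluating the regular factor at $z_0$. The dominant contribution therefore comes from the schemes maximising $b(s)$, producing an exponent $-\frac{1}{2}b_{\max}$, provided $P_s(U_0)$ does not vanish for those maximisers---a property that has to be checked from the way $P_s$ is built by the reduction algorithm underlying Theorem~\ref{thm:count}. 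Matching with the announced exponent $\frac{1}{2}(1-\beta)$ identifies $\beta = b_{\max} + 1$.

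The combinatorial heart of the proof, and the step I expect to be the main obstacle, is the identification of $b_{\max}$ with the optimum of the stated integer program. This requires a structural analysis of schemes: one must show that any scheme of degree $\delta$ decomposes into a bounded number of elementary bricks of two types, each contributing additively both to the degree and to $b(s)$, at rates $(D-2,2)$ and $(D,3)$ respectively. The constraint $(D-2)x + Dy = \delta$ then follows by additivity of the degree over the decomposition; the upper bound $b(s) \leq 2x+3y$ is read off the structure of the scheme (with a global $-1$ shift reflecting the rooting); and one checks that the bound is attained. The regimes displayed in the table are then obtained by comparing the two marginal efficiencies $2/(D-2)$ and $3/D$: they coincide at $D=6$, the first dominates for $D\leq 5$, the second for $D\geq 7$, which also forces the divisibility conditions on $\delta$ under which the optimum is reached.

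Finally, to extract the constant $c_\delta$ and its combinatorial interpretation, I would enumerate the schemes realising the optimum and sum their leading coefficients $C_s$. For $3\leq D \leq 5$, the maximisers are built exclusively from the first type of brick; their combinatorial description reduces to a balanced tree or parenthesisation structure, accounting for the appearance of Catalan numbers. The analogous analyses for $D=6$ (where both types of brick contribute) and $D\geq 7$ (only the second type) use the corresponding combinations of bricks and the evaluation of the polynomials $P_s$ at $U_0$.
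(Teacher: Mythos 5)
Your overall route is the same as the paper's: singularity analysis of $T$ and $U=zT^{D+1}$ at $z_0=D^D/(D+1)^{D+1}$ with $U_0=1/D$, so that $1-U^2$ stays regular while $1-DU\sim D\tau\sqrt{1-z/z_0}$, hence each scheme contributes $(1-z/z_0)^{-b(s)/2}$ and the dominant schemes are those maximizing the number $b$ of broken chain-vertices; the positivity of the leading constants is immediate because the $P_s$ are monomials with positive coefficients evaluated at $U_0=1/D>0$, so no cancellation occurs among maximizers. Your guessed decomposition into two types of bricks with rates $(D-2,2)$ and $(D,3)$, the global $-1$ from rooting, the comparison of the marginal efficiencies $2/(D-2)$ versus $3/D$ with crossover at $D=6$, and the Catalan count of the tree-shaped maximizers for $3\le D\le5$ all coincide with what the paper actually establishes. (One bookkeeping point to reconcile: matching the exponent $\frac12(1-\beta)$ with $-b_{\max}/2$ gives $\beta=b_{\max}+1$, whereas the stated linear program has the $-1$ already inside the objective, so that $\beta=b_{\max}$; fix the convention once and propagate it.)

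The genuine gap is that the central combinatorial inequality is announced but not proved: you say the bound $b(s)\le 2x+3y-1$ is ``read off the structure of the scheme,'' which is precisely the step that needs an argument. The paper proves it by iteratively deleting all broken chain-vertices from the scheme and marking the newly created edges. Writing $q$ for the non-separating deletions (each of which lowers the degree by exactly $D$, giving $\delta=\sum_{c\in C_+'}\delta(c)+Dq$ and hence $(D-2)c_++Dq\le\delta$ since every positive-degree component has degree at least $D-2$), one bounds the total number of marks above by $2q+2(|C_0'|+c_+)$ and below by $c_++1+3|C_0'|$; the lower bound requires showing that a degree-zero component with only two marks would be a chain of $(D-1)$-dipoles adjacent to a removed broken chain, contradicting the maximality of chains in a reduced scheme. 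Combining the two bounds yields $|C_0'|\le 2q+c_+-1$, hence $b=p+q=|C_0'|+c_++q\le 2c_++3q-1$. Realizability is then a separate explicit construction (a binary tree of broken chains with unbroken loops at the leaves, plus $q$ handle-like broken chains each creating two trivalent $(D-2)$-dipoles). Without this marked-edge counting argument and the maximality contradiction, the identification of $b_{\max}$ with the integer program, and therefore the exponent and the table, remain unjustified.
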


\subsubsection{Discussion.}
From a probabilistic point of view the above result implies that we
can give a description of large random colored graphs of fixed degree.
It was shown in \cite{gurauryan} that upon scaling edge lengths
by a factor $k^{-1/2}$ and letting $k$ go to infinity, the degree 0
colored graphs with $2k$ vertices converge in the sense of
Hausdorff-Gromov to the Continuum Random Tree. Our results suggest that 
more generally $k^{-1/2}$ is the proper scaling for which uniform random rooted
colored graphs of fixed degree $\delta \geq 1$ have a non-trivial
continuum limit when the number of vertices goes to infinity.

Another major outcome of our results is the so-called \emph{double
  scaling limit} of colored tensor models. Although the number of
colored graphs with $2k$ vertices grows super-exponentially with $k$,
we can give a meaning to a resummation of the generating series of
graphs of fixed degree. Balancing the singular behavior of these
generating series around the critical point $z_0$ with the scaling in
$N$ we can take the double limit $N\to\infty$, $z\to z_0$ in a
correlated way and exhibit a regime in which graphs with arbitrary large
degree contribute.  As suggested by Theorem~\ref{thm:main2}, this regime leads to two completely diverse behaviors, depending whether $D\leq5$, where the double scaling limit series is summable, or $D\geq 6$ where it is not.
Together with the parallel result obtained in
\cite{ongoing} by different methods and for a simpler model, 
these are the first results of this kind in the realm of tensor models.

A number of very difficult questions remain open. Prominent among them
is the following. A given topology (say spherical) can be represented
by graphs of arbitrary degree. It is a difficult open question whether
the number of triangulations of a fixed topological manifold is
exponentially bounded or not in the number of simplices (the so called Gromov question
\cite{raresph} in the case of the spherical topology). In view of our
results the question can now translate in finding an exponential bound
on the number of reduced schemes to which graphs representing
a given topology can reduce.

\subsubsection{Organization of the paper}

In Section~\ref{sec:notations} we state some definitions and elementary properties of colored
graphs. In Section~\ref{sec:core} we perform a first classification of colored graphs in 
terms of \emph{cores}. In Section~\ref{sec:chains} we discuss \emph{chains} 
and in particular show that the number of cores of fixed degree is not finite due to the presence of infinite chains. This
leads us to the notion of \emph{reduced schemes} in Section~\ref{sec:schemes}, where we show that 
the number of reduced schemes of fixed degree is finite. Sections~\ref{sec:prf1} and~\ref{sec:prf2}
contain the proofs of two technical results. In Section~\ref{sec:enumeration}
we compute the generating series of graphs associated to a reduced scheme 
and in Section~\ref{sec:asymptotics} we identify the reduced schemes with leading singular behavior
at criticality.

\newpage 

\section{Notation and generalities on colored graphs}\label{sec:notations}

\begin{figure}[t]
\begin{center}
\psfrag{G}{$G$}
\psfrag{oG}{$\mathrm{op}(G)$}
\psfrag{r}{$r(G)$}
\includegraphics[scale=.5]{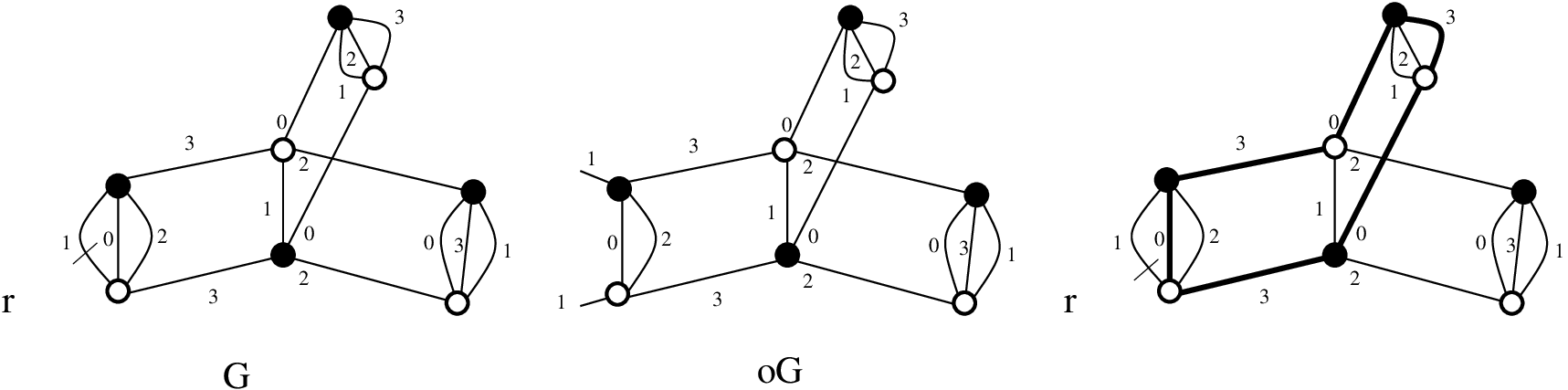}
\end{center}
\caption{A colored graph $G$ (where the root edge is represented as crossed), the open colored graph $\mathrm{op}(G)$, and a face $(0,3)$ of $G$.}\label{fig:colored}
\end{figure}

From now on in this article, an integer $D\geq3$ is fixed.

\begin{definition} A \emph{rooted, closed, connected colored graph} $G$ (henceforth called a \emph{colored graph} for short) is a connected 
bipartite $(D+1)$-regular graph with black and white vertices and colored edges, such that:
\begin{itemize}
 \item the colors of edges are taken in the set $\{0,\ldots,D\}$,
 \item each vertex is incident to exactly one edge of each color,
 \item an edge of $G$, denoted  $r(G)$, is distinguished and it is called the \emph{root edge}.
\end{itemize}

\end{definition}

Multiple edges are allowed, but, due to the color constraints, self-loops are not. An example is presented in Fig.~\ref{fig:colored}
on the left. 

We denote colored graphs by capital letters like $G,G_1$, etc.. 
We distinguish white vertices by a white dot index ($v_{\circ},w_{\circ}$, etc.)
and black vertices by a black dot index ($v_{\bullet}, w_{\bullet}$, etc.).

We include among the colored graphs the \emph{trivial colored graphs} (or \emph{ring graphs})
consisting in an edge closing onto itself and having no vertex (see Fig.~\ref{fig:ringdef} on the left). 
The edge is necessarily the root of the graph and has a color $c\in \{0,\ldots,D\}$, hence there are $D+1$ distinct ring graphs.
We denote them $R^c$, where $c$ is the color of the edge.

\begin{figure}[ht]
\begin{center}
\psfrag{c}{$c$}
\includegraphics[scale=.5]{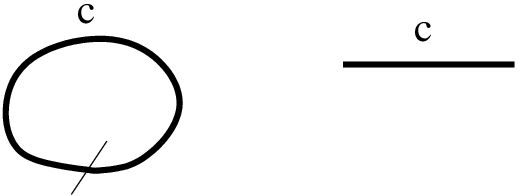}
\end{center}
\caption{A ring graph $R^c$ and a trivial open graph $\mathbb{E}^c=\textrm{op}(R^c) $.}\label{fig:ringdef}
\end{figure}

It will be convenient to regard the edges of a graph as
being formed by pairs of matched half-edges (each half-edge being
hooked to one of the end vertices of the edge) and to allow for
slightly more general structures, called \emph{pre-graphs}, in which
some of the half-edges are left unmatched. From here on half-edges
incident to white vertices will be denoted with a white dot index (\eg
$h_{\circ}, h'_{\circ}$, etc.)  and half-edges incident to black
vertex will be denoted with a black dot index (\eg $h_{\bullet},
h'_{\bullet}$, etc.).

\begin{definition}
A \emph{nontrivial open colored graph} $\mathbb{G}$ is a pre-graph with exactly two unmatched half-edges $h_{\bullet} $ and $h_{\circ}$ of the same color $c$, which 
becomes a colored graph $G$ by matching the two half-edges into an edge of color $c$ and marking this edge as the root edge $r(G)$ of $G$. 

A \emph{trivial open colored graph} $\mathbb{E}^c$ consists in a unique edge of color $c$ with no end vertex
and becomes a ring graph $R^c$ upon matching the two ends of the edge.
\end{definition}

Open colored graphs will be denoted by emphasized capital letters (\eg $\mathbb{G},\mathbb{G}_1$, etc.).

We denote $\textrm{cl}(\mathbb{G})$ (and call it the \emph{closure} of $\mathbb{G}$) the colored graph obtained by matching the two half-edges 
of the open colored graph $\mathbb{G}$ into a root edge. Conversely, given a colored graph $G$, we denote $\textrm{op}(G)$ (and call it the opening of $G$) the unique open colored graph
$\mathbb{G}$ such that $\textrm{cl}(\mathbb{G})=G$ (see  Fig.~\ref{fig:colored} in the center for an example). Of course, $\textrm{cl}(\mathbb{E}^c)=R^c$ and $\textrm{op}(R^c)=\mathbb{E}^c$ as depicted in 
Fig.~\ref{fig:ringdef} on the right.
Open colored graphs are not rooted. 

A non trivial open colored graph $\mathbb{G}$ (which is a pre-graph,
having two half-edges) can be transformed into a graph $
\textrm{Gr}(\mathbb{G})$ by simply erasing the half-edges. Two of the
vertices of $\textrm{Gr}(\mathbb{G})$ have coordination $D$, while all
the others have coordination $D+1$.  The edges of
$\textrm{Gr}(\mathbb{G})$ are colored.

The following definition is illustrated in figure \ref{fig:opensubgraphs}.

\begin{definition}
An \emph{open colored subgraph} $\mathbb{H}$ of a nontrivial open colored graph $\mathbb{G}$, denoted $\mathbb{H} \subset \mathbb{G}$,
is a non trivial open colored graph such that $\mathrm{Gr}(\mathbb{H})$ is a subgraph of $\mathrm{Gr}(\mathbb{G})$.

By extension an \emph{open colored subgraph} $\mathbb{H}$ of a colored graph $G$, denoted $\mathbb{H} \subset G$ is an open colored subgraph of $\mathrm{op}(G)$.
\end{definition}

\begin{figure}[ht]
\begin{center}
\includegraphics[scale=.3]{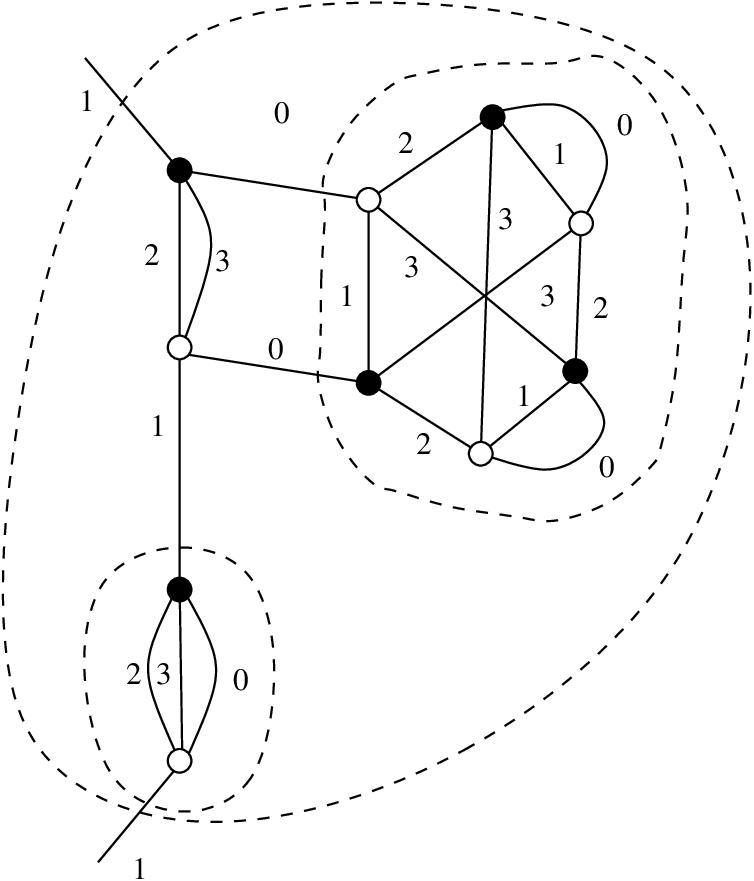}
\end{center}
\caption{Examples of open colored subgraphs of an open colored graph.}\label{fig:opensubgraphs}
\end{figure}

A half-edge, say $h_{\circ}$, of an open colored subgraph $\mathbb{H}\subset \mathbb{G}$ can either 
be one of the half-edges of $\mathbb{G}$ or belong to an edge $e$ of $\mathbb{G}$, which we denote $h_{\circ} \in e$.

Let $G$ be a colored graph. In view of the bipartiteness and
regularity constraints, $G$ has an equal number $k(G)$ of black and white
vertices, and by construction it also has $(D+1)k(G)$ edges.  Let us
define the faces of $G$ as its bicolored connected components: more
precisely, given $0\leq c \neq c'\leq D$, the \emph{faces of color
  $\{c,c'\}$} are the connected components of the subgraph consisting of
edges that have color $c$ or $c'$, and they form a set of cycles
since every vertex in the subgraph has degree 2 (see Fig.~\ref{fig:colored} on the right for an example). 
Observe that with this definition the ring graph $R^c$ has $D$ faces, one for each color different from $c$.

Let $F^{cc'}_p(G)$ denote the number of faces of color $\{c,c'\}$ and length
$2p$ of the colored graph $G$. We denote $\smash{F^{cc'}(G)=\sum_{p\geq1}F_p^{cc'}(G)}$, $\smash{F_p(G)=\sum_{0\le c<c' \le D}F_p^{cc'}(G)}$ 
and $\smash{F(G)=\sum_{p\ge 1}F_p(G)}$, the total number of faces of color $\{c,c'\}$ of $G$, the total number of faces of length $2p$ of $G$ 
and the total number of faces of $G$. 

\begin{definition} Let the \emph{reduced degree} (the \emph{degree} for short) of $G$ be the integer
$\delta(G)$ defined by the relation: 
\begin{equation}\label{eq:defdegree}
\delta(G) = \frac12D(D-1)k(G) + D -F(G) \; .
\end{equation}
\end{definition}

\begin{proposition}[\cite{color}]
The degree $\delta(G)$ of a non trivial colored graph $G$ is a non negative
integer.
\end{proposition}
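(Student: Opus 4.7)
The plan is to separate the two claims. Integrality is essentially free: since $D(D-1)$ is the product of two consecutive integers it is even, so $\frac{1}{2}D(D-1)k$ is an integer, and hence $\delta=\frac{1}{2}D(D-1)k+D-F$ is a difference of integers. The real content is therefore the inequality $\delta\geq 0$, which I would derive via the standard jacket / Euler-characteristic argument.

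The key idea is to associate to each cyclic ordering $\tau$ of the colors $\{0,\dots,D\}$ a ribbon graph $J_\tau$ --- called a \emph{jacket} --- built on the same vertex and edge set as $G$ by prescribing the cyclic order $\tau$ of incident edges at every black vertex and $\tau^{-1}$ at every white vertex. Bipartiteness of $G$ ensures that these opposite local orientations match across every edge, so the resulting ribbon surface is closed and orientable, of some genus $g_\tau\geq 0$. By construction the faces of $J_\tau$ are precisely the bicolored cycles of $G$ whose two colors are consecutive in $\tau$. Writing $F_\tau$ for the number of such faces and applying Euler's formula with $V=2k$ and $E=(D+1)k$,
\[
F_\tau \;=\; 2 - 2g_\tau + E - V \;=\; (D-1)k + 2 - 2g_\tau.
\]

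Finally I would sum this identity over all $D!$ oriented cyclic orderings of the colors. A short counting argument shows that each pair $\{c,c'\}$ appears as consecutive, in either of the two possible orders, in exactly $2(D-1)!$ such orderings: treat $(c,c')$ or $(c',c)$ as a single block and cyclically arrange it with the remaining $D-1$ colors, giving $(D-1)!$ cycles in each direction. Hence $\sum_\tau F_\tau = 2(D-1)!\cdot F$, and substituting into the summed Euler identity and rearranging gives
\[
(D-1)!\,\delta \;=\; \sum_\tau g_\tau \;\geq\; 0,
\]
as required. The one place where care is needed is in verifying that each $J_\tau$ is genuinely a closed orientable surface so that Euler's formula may be applied; this is where the bipartite structure of $G$ combined with the use of $\tau$ at black vertices versus $\tau^{-1}$ at white vertices is essential. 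The combinatorial count of adjacent pairs in cyclic orderings and the final arithmetic are then routine.
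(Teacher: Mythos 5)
Your proof is correct and follows essentially the same route as the paper's: the jacket construction, Euler's formula for each of the $D!$ cyclic orderings, and the count showing each color pair is consecutive in $2(D-1)!$ of them, yielding $\sum_\tau g_\tau=(D-1)!\,\delta\geq 0$. Your only addition is the explicit (and trivial) integrality remark, which the paper leaves implicit.
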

\begin{proof}
Each of the $D!$ cyclic permutations $\pi$ of the colors
$\{0,\ldots,D\}$ induces a unique embedding of the graph $G$ in a
compact oriented surface upon requiring that $\pi$ describes the
clockwise arrangement of edges around black vertices and the
counterclockwise arrangement of edges around white ones. The resulting
combinatorial map $G_\pi$, called in \cite{color} a \emph{jacket} of
$G$, has $(D+1)k(G)$ edges, $2k(G)$ vertices and $\sum_{c}F^{c\,\pi(c)}(G)$
faces since its faces are precisely the faces of color $\{c,\pi(c)\}$ of
$G$. Euler's formula yields the relation
$2k(G)+\sum_c F^{c\,\pi(c)} (G) =(D+1)k(G)+2-2g_\pi$, where $g_\pi$ denotes the
genus of $G_\pi$. Taking into account that each face of $G$ (say with colors $\{c,c'\}$) is 
counted by $2(D-1)!$ cycles $\pi$ (those such that $\pi(c)=c'$ and those such that $\pi(c')=c$), we have 
$\sum_{\pi} \sum_c F^{c\,\pi(c)} (G) = 2(D-1)! F(G)$, and upon summing over all $\pi$ we obtain 
$\delta(G)=\frac{1}{(D-1)!}\sum_{\pi} g_\pi$. The positivity of $\delta(G)$ thus
follows from that of the genera of all jackets.

\qed
\end{proof}

Observe that the degree of a ring graph is zero, $\delta(R^c)=0$.
The following corollary summarizes the relations we shall need on
colored graphs:

\begin{corollary}
Let $G$ be a nontrivial colored graph with $2k(G)$ vertices, $(D+1)k(G)$ edges, degree $\delta(G)$ and $F(G)$ faces, $F_p(G)$ of which have length $2p$. Then:
\bea\label{eq:doublecount1} 
D(D+1)k(G) &=&  2\sum_{p\ge 1} pF_p (G) \; , \\\label{eq:faces}
D(D-1)k(G) &=& 2F(G) +2\delta(G) -2D \; , \\\label{eq:delta'} 
(D+1)\delta (G) + 2F_1(G) &=& D(D+1)+ \sum_{p\geq
    2} \left[(D-1)p-D-1 \right]F_p(G) \; . 
\eea
\end{corollary}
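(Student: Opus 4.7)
The plan is to derive the three identities in order, each by elementary means. None of them requires a sophisticated argument: identity \eqref{eq:doublecount1} is a double count of edge/face incidences, identity \eqref{eq:faces} is a direct rewriting of the definition of the degree given just before the proposition, and identity \eqref{eq:delta'} follows by eliminating $k$ between the first two and isolating the faces of length $2$.

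For \eqref{eq:doublecount1}, I would count in two ways the pairs $(e,f)$ where $e$ is an edge belonging to a bicolored face $f$. On the one hand, any edge $e$ carries a single color $c\in\{0,\ldots,D\}$, and therefore appears in exactly $D$ bicolored subgraphs, namely those with color set $(c,c')$ for each of the $D$ remaining colors $c'$; since $e$ contributes to exactly one face in each such subgraph, it is incident to $D$ faces. The total is thus $D\cdot(D+1)k$, using that $G$ has $(D+1)k$ edges. On the other hand, each face of length $2p$ is a cycle containing $2p$ edges, so the same count equals $\sum_{p\geq1}2pF_p$. This yields \eqref{eq:doublecount1}.

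Identity \eqref{eq:faces} is obtained by simply multiplying the definition $\delta=\tfrac12D(D-1)k+D-F$ by $2$ and rearranging. For \eqref{eq:delta'} I would eliminate $k$ between \eqref{eq:doublecount1} and \eqref{eq:faces}: the former gives $k=\tfrac{2}{D(D+1)}\sum_{p\geq1}pF_p$, and substituting into the latter produces
\[
(D-1)\sum_{p\geq1}pF_p = (D+1)F+(D+1)\delta-D(D+1).
\]
Splitting $F=F_1+\sum_{p\geq2}F_p$ and $\sum_{p\geq1}pF_p=F_1+\sum_{p\geq2}pF_p$ and collecting the $F_1$ contribution, one obtains $-2F_1+\sum_{p\geq2}\bigl((D-1)p-D-1\bigr)F_p$ on the right-hand side after moving $(D+1)\delta$ to the left, which rearranges exactly into \eqref{eq:delta'}.

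There is no substantive obstacle here, since the proposition is essentially an algebraic repackaging of the degree formula; the only combinatorial input is the incidence count in \eqref{eq:doublecount1}, and it is immediate from the definition of bicolored faces. The value of the corollary lies less in its proof than in the fact that \eqref{eq:delta'} expresses the degree as a non-negative combination of $F_p$ for $p\geq 2$ (since $(D-1)p-D-1\geq 0$ for $p\geq 2$ when $D\geq 3$) minus $2F_1$, which will be the key structural handle for the classification in the forthcoming sections.
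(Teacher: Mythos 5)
Your proof is correct and follows exactly the route the paper takes: the first identity by double counting edge--face incidences (each edge lies in $D$ bicolored faces), the second by rewriting the definition of the degree, and the third as a linear combination of the first two. The paper only sketches these steps in one line each; your version supplies the same argument with the algebra written out, and the elimination of $k$ checks out.
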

\begin{proof}
The first equation follows by observing that in the graph $G$ there are a total of $D(D+1) k(G)$ pairs of edges at vertices of $G$, and that each such pair is 
a corner on some face. The second is the definition of the degree.  The third one is a simple linear combination of the first
two. 

\qed
\end{proof}

Equation~(\ref{eq:delta'}) already suggests that the classical case
$D=2$, which we do not consider here, is very different from
$D\geq4$. On the one hand when $D=2$, the coefficient of $F_2(G)$ in the
right hand side is negative, so that the number of large faces (or the
degree of the largest face) can be arbitrarily large even if $\delta(G)$
and $F_1(G)$ are fixed. On the other hand when $D\geq4$, the right hand
sum has only positive coefficients and the number of large faces (or
the degree of the largest face) is bounded by $(D+1)\delta(G)+2F_1(G)$. The
case $D=3$ is \emph{a priori} intermediate in the sense that faces of
degree 4 could proliferate at fixed $\delta(G)$ and $F_1(G)$ (since the
coefficient of $F_2(G)$ is zero in this case), but we shall see later
that this does not happen, and the dichotomy is really between $D=2$
and $D\geq 3$.

\newpage

\section{The core of a rooted colored graph}\label{sec:core}

\subsection{First attempts to define the core}\label{ssec:mremove}

A \emph{melon with external color $c$} (or simply a \emph{melon}) $\mathbb{O}^c$ in a colored graph $G$ is an open colored subgraph of $G$ 
consisting of $D$ parallel edges joining two vertices, and two
half-edges of color $c$ (one on each vertex of $\mathbb{O}^c$). Depending on the external color $c$, there are $D+1$ distinct types of melons. 
The \emph{melon removal} of $\mathbb{O}^c$ in $G$ consists in cutting the two edges of $G$ corresponding to the half-edges 
of $\mathbb{O}^c$, deleting $\mathbb{O}^c$, and gluing the two remaining half-edges to recreate an edge of color $c$ (see Fig.~\ref{fig:meloremove}).

\begin{figure}[ht]
\begin{center}
\psfrag{m}{$\mathbb{O}^c$}
\psfrag{c}{$c$}
\includegraphics[scale=.3]{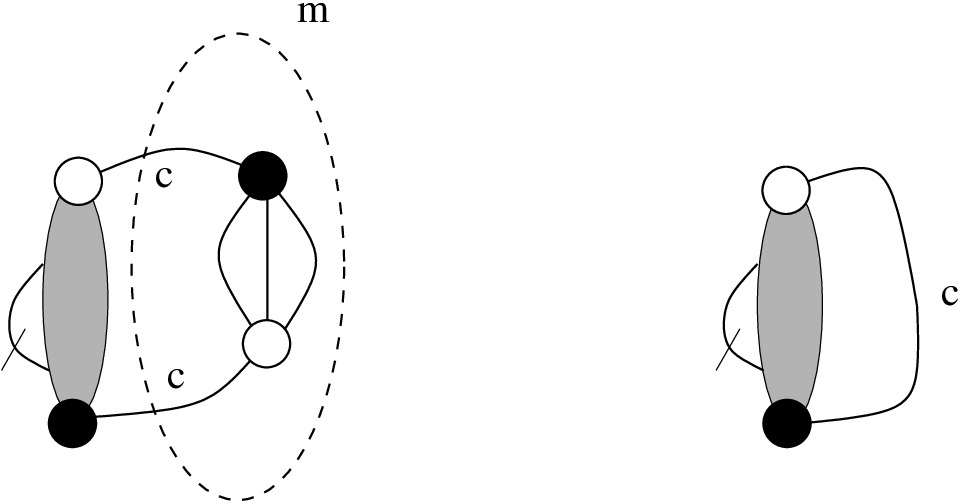}
\end{center}
\caption{The removal of a melon}\label{fig:meloremove}
\end{figure}

We would like to define the \emph{core} of a rooted colored graph
as ``the'' graph obtained by a maximal sequence of melon
removals. However in this ``definition'', it is not clear that the
core is uniquely defined.  We will therefore follow an alternative
approach, which is reminiscent of an old result of Tutte \cite{tutte}
about \emph{2-connected graphs}, that is, graphs that cannot be
disconnected by removing a single vertex.  In the modern phrasing of
\cite{fusy}, Tutte's result states that any 2-connected graph $G$ with set of vertices $\cV$ and set of edges $\cE$ (which we denote
$G=(\cV,\cE)$) can be decomposed into an arborescent structure called its
RMT-tree by a careful analysis of all its 2-cuts, that is, its pairs
$\{x,y\}$ of vertices such that $G|_{\cV\setminus\{x,y\}}$ is not
connected.  Our interest in this result arises from the following
lemma.

\begin{lemma}
Non trivial colored graphs are 2-connected.
\end{lemma}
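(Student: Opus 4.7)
The plan is to show directly that for every vertex $v$ of a colored graph $G$, the graph $G\setminus\{v\}$ is connected. The key idea is that the bicolored faces through $v$ provide ready-made paths connecting its neighbors without using $v$ itself.

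First I would fix a vertex $v$ and exploit the face structure. For any two colors $c\neq c'$ in $\{0,\ldots,D\}$, the bicolored subgraph on colors $(c,c')$ has every vertex of degree exactly $2$, so each of its connected components is a cycle (a face of color $(c,c')$). The face passing through $v$ visits $v$ only once, entering by its unique $c$-edge and leaving by its unique $c'$-edge. Deleting $v$ from this cycle leaves a path in $G\setminus\{v\}$ whose endpoints are precisely the $c$-neighbor and $c'$-neighbor of $v$. Hence these two neighbors lie in the same connected component of $G\setminus\{v\}$. Since this holds for every pair of colors, all $D+1$ neighbors of $v$ lie in a common component $C$ of $G\setminus\{v\}$.

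Next I would show that every other vertex of $G$ also belongs to $C$. Given $u\neq v$, take a shortest path from $u$ to $v$ in $G$ (which exists by connectedness of $G$); by minimality, $v$ appears only as the endpoint of this path, and its predecessor is some neighbor $w$ of $v$. The portion of the path from $u$ to $w$ lies entirely in $G\setminus\{v\}$, so $u$ is connected to $w$ there, and thus $u\in C$. Therefore $G\setminus\{v\}$ is connected, and as $v$ was arbitrary, $G$ is $2$-connected.

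There is no real obstacle in this argument: once one recognizes that the colored regularity forces the bicolored subgraphs to decompose into cycles, the faces through $v$ immediately certify 2-connectivity. The only points worth checking carefully are that $v$ appears only once on each such face (which follows from its degree $2$ in the bicolored subgraph) and that self-loops, which could cause degenerate cases, are forbidden by the color constraints.
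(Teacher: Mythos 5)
Your proof is correct, but it takes a genuinely different route from the paper's. The paper argues by contradiction with a degree double-count: if deleting a vertex $x$ disconnects $G$, take one component $C$ of the remainder and look at the subgraph induced by $C\cup\{x\}$; every vertex there has degree $D+1$ except $x$, which has degree $\ell$ with $1\leq\ell\leq D$, and equating the edge count from the black side with that from the white side of the bipartition forces $(D+1)$ to divide $\ell$, a contradiction. That argument uses only bipartiteness and $(D+1)$-regularity and fits in three lines. You instead exhibit explicit connecting paths: the bicolored $(c,c')$-subgraphs are $2$-regular, so the face through $v$ minus $v$ is a path joining the $c$-neighbor to the $c'$-neighbor inside $G\setminus\{v\}$, which places all neighbors of $v$ in one component, and the shortest-path step finishes it. Your argument is constructive and has the pedagogical advantage of introducing the faces that drive the rest of the paper; its only cost is a little more bookkeeping (you rightly note the need to check that $v$ occurs once on the face, and one should also allow the degenerate case where the $c$- and $c'$-neighbors coincide because of a multiple edge, which your argument handles trivially). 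Both proofs are sound; the paper's is shorter, yours generalizes verbatim to any properly edge-colored regular multigraph without invoking bipartiteness.
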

\begin{proof}
Consider a colored graph $G=(\cV,\cE)$ containing a white vertex $w_{\circ}$ such that
$G'=G|_{\cV\setminus\{w_{\circ}\}}$ is not connected and let $G_1$ be a connected component of
$G'$. Let $\ell$ denote the number of edges between $w_{\circ}$ and
vertices of $G_1$. By hypothesis there is at least one other connected
component in $G'$ so that $1\leq \ell\leq D$. Let us consider the
subgraph of $G$ induced by $w_{\circ}$ and the vertices of $G_1$.  In this
subgraph all the black vertices and all the white vertices except $w_{\circ}$ have
degree $D+1$, while $w_{\circ}$ has degree $1\leq\ell\leq D$: double counting
of edges yields a contradiction.

\qed
\end{proof}

In view of this lemma one could directly apply Tutte's result to
colored graphs and obtain a decomposition by describing the resulting
RMT-trees, and this was our approach in an earlier version of this
article. However it turns out to be easier to derive the decomposition
we need from a direct analysis of 2-edge-cuts.

\subsubsection{2-edge-cuts in colored graphs.}
A \emph{2-edge-cut} of a 2-connected graph $G=(\cV,\cE)$ is a pair of
edges $\{e,e'\}$ such that the graph
$G-\{e,e'\}=(\cV,\cE\setminus\{e,e'\})$ is not connected (see
Fig.~\ref{fig:2-cuts} left hand side).  Again by double counting of
edges, one easily checks that the two edges of a 2-edge-cut in a
colored graph must have the same color.  A \emph{simple cycle} of a
graph $G$ is a cycle visiting each vertex of $G$ at most once.

\begin{lemma}\label{lem:2-cut}
Let $G$ be a 2-connected graph. Then $\{e,e'\}$ is a 2-edge-cut in $G$
if and only if any simple cycle visiting $e$ also visits $e'$.
\end{lemma}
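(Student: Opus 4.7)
The plan is to prove the two implications separately, with the key ingredient being that a 2-connected graph (in the vertex sense, as defined just before the lemma) contains no bridge, i.e.\ it is also 2-edge-connected. Consequently, for any single edge $f$, the graph $G\setminus\{f\}$ remains connected.

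The converse direction ($\Leftarrow$) I would handle by contrapositive. Suppose $\{e,e'\}$ is not a 2-edge-cut, so that $G\setminus\{e,e'\}$ is connected. Writing $e=\{u,v\}$, I would pick any simple path $P$ from $u$ to $v$ inside $G\setminus\{e,e'\}$; then $P$ together with $e$ forms a simple cycle visiting $e$ but avoiding $e'$, contradicting the hypothesis.

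The forward direction ($\Rightarrow$) is more delicate, and this is where I expect the main obstacle to lie. Assuming $\{e,e'\}$ is a 2-edge-cut, I would first observe that $G\setminus\{e\}$ is still connected, so that $e'$ must be a bridge in $G\setminus\{e\}$, splitting it into exactly two components $A$ and $B$; these are also the components of $G\setminus\{e,e'\}$. The step requiring the most care, though still elementary, is showing that the two endpoints $u,v$ of $e$ lie in \emph{different} components: if both were in $A$, then $G\setminus\{e'\}=(G\setminus\{e,e'\})\cup\{e\}$ would still have $B$ as an isolated component, contradicting the fact that $e'$ is not a bridge in $G$. Once this is in place, any simple cycle $C$ visiting $e$ but not $e'$ would yield a $u$-to-$v$ path $C\setminus\{e\}$ lying entirely in $G\setminus\{e,e'\}$, which is impossible since $u$ and $v$ lie in different components of this graph. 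This completes the contradiction and finishes the proof.
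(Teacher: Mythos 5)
Your proof is correct and follows essentially the same route as the paper's: the converse direction via a simple path between the endpoints of $e$ in $G\setminus\{e,e'\}$, and the forward direction via the observation that the two endpoints of $e$ lie in different components of $G\setminus\{e,e'\}$, so that the path $C\setminus\{e\}$ must use $e'$. The only difference is that you carefully justify this last point (using the absence of bridges in a 2-connected graph to rule out both endpoints landing in the same component), a step the paper asserts without proof.
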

\begin{proof}
Let $e=\{x,y\}$ and let $Cycle$ be a simple cycle visiting $e$. Then
$G-\{e,e'\}$ has two connected components, one containing $x$ and the
other containing $y$. But $Cycle-e$ is a path from $x$ to $y$ and $e'$ is
the only edge connecting the two components in $G-\{e\}$.

Conversely if $\{e,e'\}$ is not a 2-edge-cut then there exists a path
in $G-\{e,e'\}$ between any  two vertices, and in particular
between the endpoints of $e$. 

\qed
\end{proof}

\begin{lemma}\label{lem:cyclic}
Let $\{e,e'\}$ and $\{e,e''\}$ be two distinct 2-edge-cuts in a 2-connected
graph $G$.  Then $\{e',e''\}$ is also a 2-edge-cut of $G$. Moreover if
two oriented cycles visit $e$ in the same direction, then they both
visit $e'$ and $e''$ in the same order after $e$.
\end{lemma}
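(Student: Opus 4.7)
My plan is to combine Lemma~\ref{lem:2-cut} with a direct analysis of the connected components that appear after cutting, freely using the fact that a 2-connected graph with at least three edges is 2-edge-connected (so $G$ has no bridge). I would start by writing $G-\{e,e'\}=A\sqcup B$ with $e=(a,b)$, $e'=(a',b')$, and $a,a'\in A$, $b,b'\in B$.

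The first step is to locate $e''$ with respect to this partition. If $e''$ crossed between $A$ and $B$, then $e'$ and $e''$ would both connect $A$ to $B$ inside $G-e$, so $G-\{e,e''\}$ would remain connected, contradicting the hypothesis that $\{e,e''\}$ is a 2-edge-cut. Hence $e''$ lies inside $A$ or inside $B$; up to symmetry, assume $e''\subset A$. Then $A-e''$ must itself be disconnected (else $G-\{e,e''\}$ would stay connected via $e'$), so write $A-e''=A_1\sqcup A_2$. The endpoints $a$ and $a'$ must lie on opposite sides: if they both lay in $A_1$, then $e''$ would be the only edge of $G$ incident to $A_2$, making it a bridge, which is excluded. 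Say $a\in A_1$ and $a'\in A_2$. It then follows directly that $G-\{e',e''\}=(A_1\cup B)\sqcup A_2$, with the first piece connected through $e$, so $\{e',e''\}$ is indeed a 2-edge-cut.

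For the second part I apply Lemma~\ref{lem:2-cut} to conclude that every simple cycle through $e$ passes through both $e'$ and $e''$. The structural picture above shows that the only edges of $G$ leaving the vertex set $A_1$ are $e$ and $e''$, while the only edges leaving $A_2$ are $e'$ and $e''$. So an oriented simple cycle traversing $e$ from $b$ to $a$ enters $A_1$, must exit via $e''$ into $A_2$, and must then exit $A_2$ via $e'$ to close through $B$; the induced cyclic order is $e,e'',e'$. Traversing $e$ in the opposite direction forces the reverse order $e,e',e''$ by a symmetric argument. In each case the order of $e'$ and $e''$ after $e$ depends only on the direction of traversal of $e$, as claimed.

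The main obstacle will be, I expect, the case analysis itself: confirming that the only configuration consistent with the hypotheses is $e''$ lying inside one side of the $\{e,e'\}$-cut with its two endpoints separating the endpoints of $e$ and $e'$ across the resulting subcut. The 2-edge-connectivity of $G$ is used in the critical subcase ruling out the possibility that $a$ and $a'$ lie on the same side of the cut induced by $e''$ inside $A$; this subtlety deserves to be singled out.
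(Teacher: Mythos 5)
Your proof is correct, but it takes a genuinely different route from the paper's. For the first claim the paper argues entirely through the cycle characterization of Lemma~\ref{lem:2-cut}: any simple cycle visiting $e'$ must visit $e$ (since $\{e,e'\}$ is a 2-edge-cut) and hence $e''$ (since $\{e,e''\}$ is), so $\{e',e''\}$ is a 2-edge-cut by Lemma~\ref{lem:2-cut}; for the second claim it supposes two cycles visit $e'$ and $e''$ in opposite orders after $e$ and derives a contradiction from a path that would join the two components of $G-\{e,e'\}$ while avoiding $e$ and $e'$. You instead carry out an explicit component analysis: you locate $e''$ inside one side $A$ of the cut $\{e,e'\}$, show that $A-e''$ splits into $A_1\sqcup A_2$ with the endpoints of $e$ and $e'$ on opposite sides (bridgelessness is indeed the right tool to exclude the degenerate subcase), and read off both conclusions from the resulting three-block structure $A_1,A_2,B$ in which each block meets exactly two of the edges $e,e',e''$. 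Your argument is longer but more informative: the three-block picture is precisely the $\ell=2$ case of the cyclic arrangement and partition $X_0,\ldots,X_\ell$ that the paper only establishes in the subsequent lemma on cut-sets, so your route proves that structural statement along the way, whereas the paper's proof is shorter because it leans entirely on Lemma~\ref{lem:2-cut}. One cosmetic remark: the subcase where $e''$ ``crosses between $A$ and $B$'' is vacuous, since $A$ and $B$ are the connected components of $G-\{e,e'\}$ and no remaining edge joins them; your conclusion there is right but the case needs no argument.
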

\begin{proof}
Any cycle visiting $e'$ visits also $e$ (since $\{e,e'\}$ is a
2-edge-cut) and thus also $e''$ (since $\{e,e''\}$ is a 2-edge-cut),
and we conclude by Lemma~\ref{lem:2-cut}. Next assume that there exist
two oriented cycles $(e,Path_1,e', Path_1', e'',Path_1'')$ and
$(e,Path_2,e'',Path_2',e',Path_2'')$ that visit $e$ in the same
direction. But then the path $Path_2$ would connect the two connected
components of $G-\{e,e'\}$ without visiting $e$ or $e'$.

\qed
\end{proof}

A \emph{proper cut-set} of a 2-connected graph $G$ is a maximal set
$Cut$ of edges such that any 2 edges of $Cut$ form a 2-edge-cut. In view
of the previous lemma, an edge can belong to at most one proper
cut-set, so that we define the \emph{cut-set of an edge} as the unique
proper cut-set it belongs to if it exists, or the edge itself otherwise.

\begin{lemma}
Let $G=(\cV,\cE)$ be a non trivial colored graph and let $e_0$ be an edge of $G$ with non-trivial cut-set $Cut$.  Then
there exists a unique way to cyclically arrange the edges of $Cut$ as
$(e_0,\ldots,e_\ell)$ and a unique partition $\cV_0,\ldots,\cV_\ell$ of
$\cV$ such that $\cE=Cut\cup \cE_{\cV_0}\cup\ldots\cup \cE_{\cV_\ell}$, where $\cE_{\cV_i}$ are edges connecting only vertices in $\cV_i$,
and for all $i=0,\ldots,\ell$, $e_{i}$ connects a black vertex of $\cV_i$ to a white
vertex of $\cV_{i+1}$ with indices taken modulo $\ell+1$ (see Fig.~\ref{fig:2-cuts}).
\end{lemma}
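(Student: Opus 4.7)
The plan is to analyze the multigraph $Q$ obtained from $G$ by contracting each connected component of $G-C$ to a single vertex while keeping the edges of $C$, each oriented from its black to its white endpoint. The lemma reduces to showing that $Q$ is a directed cycle: its vertices will be the parts $X_0,\ldots,X_\ell$ and its arcs the $e_0,\ldots,e_\ell$ in cyclic order.

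First I would check that $Q$ has no loops: for any $e\in C$, if its two endpoints lay in the same component $Y$ of $G-C$, then $e$ together with a path in $Y$ joining them would yield a simple cycle of $G$ disjoint from $C\setminus\{e\}$, contradicting Lemma~\ref{lem:2-cut} applied to the 2-edge-cut $\{e,e'\}$ for any other $e'\in C$. Next, $Q$ is bridgeless: since $G$ is $(D+1)$-regular with $D\ge 3$, any bridge of $G$ would force both its endpoints to be cut vertices, contradicting the 2-connectivity of colored graphs, and bridges cannot be created by contraction. Finally, the cut-set hypothesis translates directly to: every pair of edges of $Q$ is a 2-edge-cut of $Q$.

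These three properties force $Q$ to be an undirected cycle. Suppose otherwise; being 2-edge-connected but not a cycle, $Q$ contains a theta subgraph, i.e.\ three internally disjoint paths $P_1,P_2,P_3$ between two branch vertices (pick any cycle of $Q$ and the first open ear of an ear decomposition). Choosing an edge of $P_1$ and an edge of $P_2$, the path $P_3$ together with the remaining parts of $P_1,P_2$ keeps $Q$ connected after their deletion, contradicting the pairwise 2-edge-cut property. So $Q$ is a cycle, and in particular each component $Y$ of $G-C$ is incident to exactly two edges of $C$.

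To orient this cycle consistently, I would use the $(D+1)$-regularity. For a fixed color $c\in\{0,\ldots,D\}$ the color-$c$ edges of $G$ form a perfect matching, so letting $m_{Y,c}$ be the number of color-$c$ edges internal to $Y$, the color-$c$ edges of $C$ incident to $Y$ split into $|B_Y|-m_{Y,c}$ ``outgoing'' (black endpoint in $Y$) and $|W_Y|-m_{Y,c}$ ``incoming'' (white endpoint in $Y$). Summing over $c$ gives
\[
k^{\mathrm{out}}_Y - k^{\mathrm{in}}_Y = (D+1)\bigl(|B_Y|-|W_Y|\bigr),
\]
a multiple of $D+1\ge 4$; combined with $k^{\mathrm{out}}_Y + k^{\mathrm{in}}_Y = 2$ from the cycle structure, this forces $k^{\mathrm{out}}_Y=k^{\mathrm{in}}_Y=1$. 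Thus $Q$ is a directed cycle, and the cyclic arrangement of the lemma is read off by picking any $e_0\in C$, letting $X_0$ be the component containing the black endpoint of $e_0$, and following the orientation; uniqueness up to cyclic rotation is then immediate since the partition is forced to be the components of $G-C$. The main obstacle I expect is the ``every pair is a cut implies $Q$ is a cycle'' step via theta-avoidance; once $Q$ is a cycle, the color-counting argument for the orientation is short and mechanical.
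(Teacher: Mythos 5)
Your route---contracting each component of $G-C$ to obtain a multigraph $Q$ on edge set $C$ and showing $Q$ is a directed cycle---is genuinely different from the paper's, which instead takes one simple cycle through an edge of $C$ (forced by Lemma~\ref{lem:2-cut} to traverse all of $C$) and reads the parts $X_i$ off the consecutive 2-edge-cuts $\{e_i,e_{i+1}\}$. Your closing color-count, showing each component has exactly one ``outgoing'' and one ``incoming'' cut edge because $k^{\mathrm{out}}_Y-k^{\mathrm{in}}_Y$ is a multiple of $D+1\ge 4$ while $k^{\mathrm{out}}_Y+k^{\mathrm{in}}_Y=2$, is correct and in fact supplies the black/white alternation that the paper dismisses as immediate.

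There is, however, a genuine gap in the step ``being 2-edge-connected but not a cycle, $Q$ contains a theta subgraph.'' This is false as stated: two cycles sharing a single vertex form a 2-edge-connected multigraph with no theta subgraph and no open ear. Open ear decompositions require 2-\emph{vertex}-connectivity, and although $G$ is 2-vertex-connected, that property is not inherited by $Q$ under contraction of the components of $G-C$, so you cannot invoke it. The conclusion survives, and more cheaply: since $Q$ is bridgeless it contains a simple cycle $Z$; if some edge $e'$ of $Q$ were not on $Z$, then picking $e\in Z$ (both lie in $C$, so $\{e,e'\}$ is a 2-edge-cut of $G$ and hence of $Q$) and applying Lemma~\ref{lem:2-cut} to $Q$ would force $Z$ to visit $e'$, a contradiction; hence $Z$ exhausts the edges of $Q$ and $Q$ is a cycle. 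This repair essentially collapses your cycle step onto the paper's argument, transplanted to $Q$. You should also dispose separately of the degenerate case where the cut-set is a single edge belonging to no proper cut-set: there $Q$ is one vertex with a loop, your ``no loops'' argument does not apply, and the lemma holds trivially with $\ell=0$ and $X_0=X$.
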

\begin{proof}
 Since $G$ is 2-connected, there exists a
simple cycle visiting $e_0$, and this cycle also visits the other
edges in $Cut$. Orient this cycle so that $e_0$ is visited from its
black to its white endpoint and let $(e_0,e_1,\ldots,e_\ell)$ describe
the cyclic arrangement of the edges of $Cut$ along this oriented
cycle. Then $\{e_i,e_{i+1}\}$ forms a 2-edge-cut and one of the
components of $G-\{e_i,e_{i+1}\}$ contains the edges 
$e_{i+2},\ldots,e_{i-1}$. Let $\cV_i$ denote the vertex set of other
component. Then the $\cV_i$ are disjoint and form a partition of $\cV$ and
the other required properties are immediate. The uniqueness follows
from the fact that any other cycle traversing $e_0$ has to visit the edges of
$Cut$ in the same order (Lemma~\ref{lem:cyclic}).

\qed
\end{proof}

We will be especially interested in the cut-set of the root edge $r(G)$ of a colored graph $G$.
The list $( \mathbb{G}_0,\ldots, \mathbb{G}_\ell)$ of \emph{open components} of the cut-set of the root $r(G)$ is the set of open colored graphs
obtained by cutting each edge of this cut-set into two half-edges: with
the notation of the lemma, the vertex set of $\mathbb{G}_i$ is $\cV_i$ and the
two half-edges of $\mathbb{G}_i$ arise from $e_i$ and $e_{i+1}$ respectively. The
set of \emph{closed components} of the cut-set of the root is then the set
of colored graphs obtained from the open components by
reconnecting in each of them the unique available pair of half-edges
to form a root edge: $ \left( \textrm{cl}( \mathbb{G}_i), i=0,\ldots,\ell \right)$.
These definitions are illustrated by Fig.~\ref{fig:2-cuts}.

\begin{figure}[ht]
\begin{center}
\psfrag{e}{$ e $}
\psfrag{ep}{$e'$}
\psfrag{e0}{$r(G)=e_0$}
\psfrag{e1}{$e_1$}
\psfrag{e2}{$e_2$}
\psfrag{e3}{$e_3$}
\psfrag{V0}{$\cV_0$}
\psfrag{V1}{$\cV_1$}
\psfrag{V2}{$\cV_2$}
\psfrag{V3}{$\cV_3$}
\psfrag{G0}{$\mathbb{G}_0$}
\psfrag{G1}{$\mathbb{G}_1$}
\psfrag{G2}{$\mathbb{G}_2$}
\psfrag{G3}{$\mathbb{G}_3$}
\psfrag{cG0}{$\textrm{cl}(\mathbb{G}_0)$}
\psfrag{cG1}{$\textrm{cl}(\mathbb{G}_1)$}
\psfrag{cG2}{$\textrm{cl}(\mathbb{G}_2)$}
\psfrag{cG3}{$\textrm{cl}(\mathbb{G}_3)$}
\includegraphics[scale=.5]{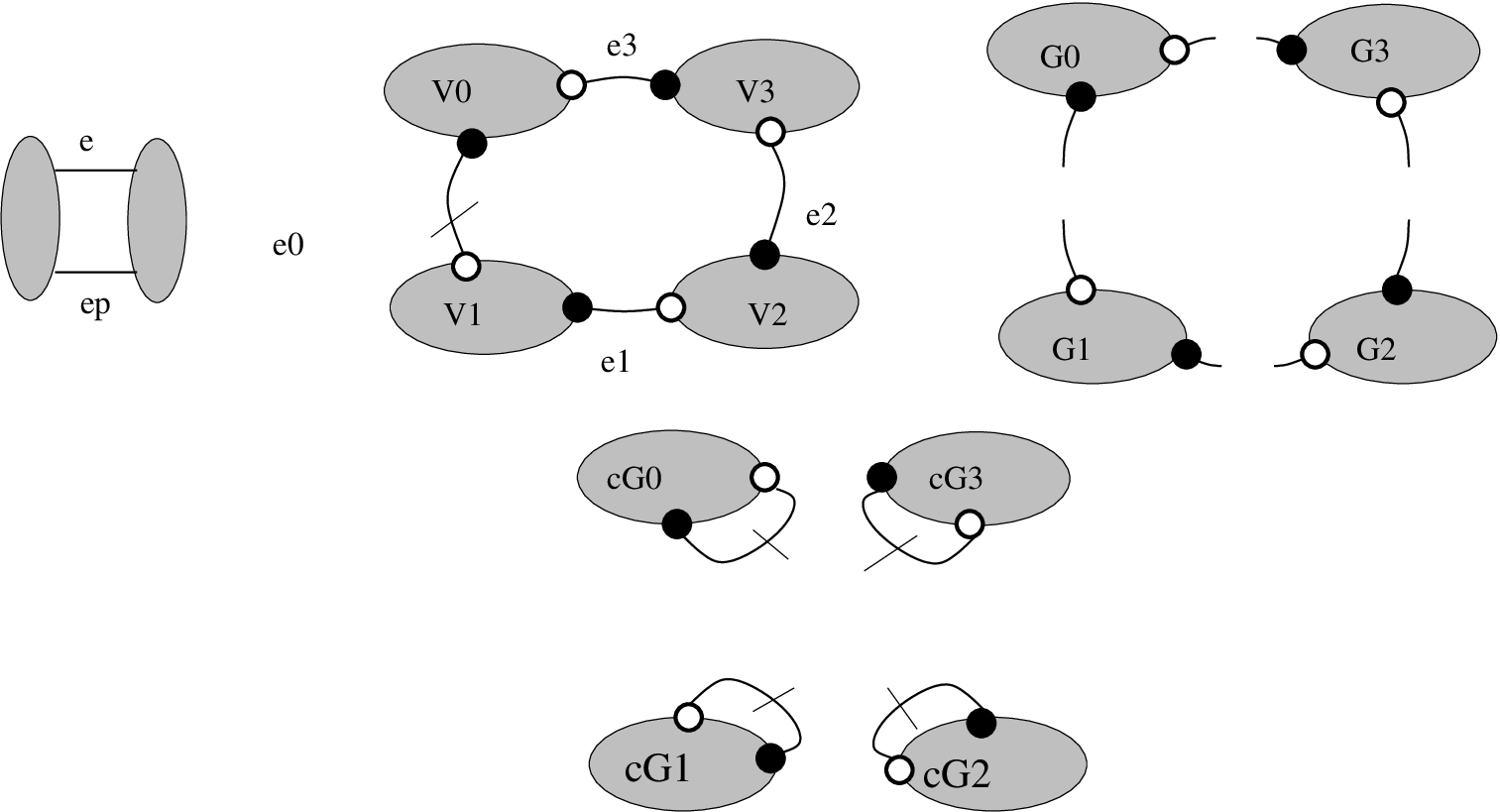}
\end{center}
\caption{A 2-edge-cut, the set-cut of the root edge and the corresponding open and
  closed components.}\label{fig:2-cuts}
\end{figure}

Finally the following immediate lemma will be useful.

\begin{lemma}\label{lem:useful}
Let $Cut'$ and $Cut''$ be two distinct cut-sets in a colored graph
$G$. Then there is an open component $\mathbb{G}'$ of $Cut'$ containing $Cut''$,
and an open component $\mathbb{G}''$ of $Cut''$ containing $Cut'$. 
\end{lemma}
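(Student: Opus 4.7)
The plan is to argue by contradiction. First I would dispose of the degenerate case: if either $C'$ or $C''$ is a singleton cut-set $\{e\}$, then since $G$ is 2-connected there are no bridges, so $G-e$ is connected and the unique open component of $\{e\}$ (namely the pre-graph obtained by cutting $e$ into two half-edges) contains every other edge of $G$, and in particular all of the other cut-set.

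For the main case, assume both $C'$ and $C''$ are proper cut-sets of size at least two, and suppose for contradiction that two edges $e''_1, e''_2 \in C''$ lie in distinct open components $G_i, G_j$ of $C'$. Using the preceding cyclic-arrangement lemma, pick $e'_a \in C'$ on one arc of the cyclic order separating $G_i$ from $G_j$ and $e'_b \in C'$ on the opposite arc. Then $\{e'_a, e'_b\}$ is a 2-edge-cut of $G$ partitioning $V(G) = P_1 \sqcup P_2$ with $X_i \subseteq P_1$ and $X_j \subseteq P_2$, while $\{e''_1, e''_2\}$ is a 2-edge-cut partitioning $V(G) = A \sqcup B$.

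The crux is a four-quadrant analysis of $A \cap P_1$, $A \cap P_2$, $B \cap P_1$, $B \cap P_2$. Since distinct cut-sets are disjoint we have $C' \cap C'' = \emptyset$, so no edge of $C'$ can cross between $A$ and $B$ (such an edge would form a 2-edge-cut with each $e''_k$ by Lemma~\ref{lem:cyclic}, and the usual maximality argument would put it in $C''$). Hence each of $e'_a, e'_b$ has both endpoints on one side of $A/B$. Using that $A$ and $B$ are each connected in $G - \{e''_1, e''_2\}$ and that $e'_a, e'_b$ are the only $P_1/P_2$-crossing edges of $G$, $e'_a$ and $e'_b$ cannot both lie in $A$ nor both in $B$: otherwise the opposite side would split into two pieces $B \cap P_1$ and $B \cap P_2$ (or symmetrically) with no edge between them, contradicting connectedness. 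So, up to swapping names, $e'_a$ lies in $A$ and $e'_b$ lies in $B$.

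The final step is to verify that $\{e'_a, e''_1\}$ is itself a 2-edge-cut. Every edge incident to the quadrant $A \cap P_1$ either stays inside $A \cap P_1$, or equals $e''_1$ (crossing $A/B$ inside $P_1$), or equals $e'_a$ (crossing $P_1/P_2$ inside $A$): indeed any other $C'$-edge stays inside $P_1$ or inside $P_2$ by the 2-edge-cut property of $\{e'_a, e'_b\}$, and has both endpoints on one $A/B$ side by the disjointness of $C'$ and $C''$. Thus removing $\{e'_a, e''_1\}$ isolates $A \cap P_1$, proving the claim. Applying Lemma~\ref{lem:cyclic} to the pair $\{e'_a, f\}$ and $\{e'_a, e''_1\}$ for each $f \in C' \setminus \{e'_a\}$ shows that $\{f, e''_1\}$ is a 2-edge-cut for every such $f$; by maximality of the proper cut-set $C'$ this forces $e''_1 \in C'$, contradicting $e''_1 \in C''$. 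The symmetric conclusion (an open component of $C''$ containing $C'$) follows by exchanging the roles of $C'$ and $C''$. The main obstacle I foresee is the quadrant bookkeeping, specifically verifying that every $C'$-edge other than $e'_a, e'_b$ is trapped inside a single $A\cap P_k$ or $B\cap P_k$ quadrant so that no hidden edge reconnects $A \cap P_1$ to the rest after the cut.
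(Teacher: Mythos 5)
Your proof is correct. It shares the paper's overall shape --- assume two edges of $C''$ fall in different open components of $C'$, produce a 2-edge-cut pairing an edge of $C'$ with an edge of $C''$, and derive a contradiction with the maximality and disjointness of proper cut-sets via Lemma~\ref{lem:cyclic} --- but the middle step is executed quite differently. The paper obtains the offending 2-edge-cut in one line from the cycle characterization of Lemma~\ref{lem:2-cut}: any simple cycle through $e''_1$ must also visit $e''_2$, hence must cross the two boundary edges of the component $G_i$ containing $e''_1$, so $e''_1$ forms a 2-edge-cut with each of them. You never invoke Lemma~\ref{lem:2-cut}; instead you exhibit the cut $\{e'_a,e''_1\}$ directly through the quadrant analysis of the two vertex partitions $P_1\sqcup P_2$ and $A\sqcup B$. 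This is more elementary (no appeal to cycles) but considerably longer. The bookkeeping you flag as a worry does go through; the only point worth writing down explicitly is that all four quadrants are nonempty, which follows because $e''_1$ has both endpoints in $X_i\subseteq P_1$ and one endpoint in each of $A$ and $B$, and symmetrically $e''_2$ has both endpoints in $X_j\subseteq P_2$. This nonemptiness is what makes the ``both in $A$'' case genuinely disconnect $B$, and what guarantees that $A\cap P_1$ and its complement are both nonempty so that removing $\{e'_a,e''_1\}$ really disconnects $G$.
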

\begin{proof}
Assume $Cut''$ is not contained in any of the open components of $Cut'$,
and let $e''_i,e''_j$ be two edges of $Cut''$ appearing in two different
components $\mathbb{G}'_p$ and $\mathbb{G}'_q$ of $Cut'$. Then any cycle visiting $e''_i$
also visits $e''_j$, and thus $e'_p$ and $e'_{p+1}$ and so that $Cut'$
and $Cut''$ are not disjoint. 

\qed
\end{proof}

\subsection{Melons and melonic graphs}

An important ingredient in our approach are the open \emph{melonic graphs}, defined inductively (see Fig.~\ref{fig:melons}) below.

\begin{definition}\label{def:melonin}
 An open colored graph $\mathbb{M}$ with half-edges of color $c$ is a:
\begin{itemize} \item  \emph{melonic graph} if:
\begin{itemize}
  \item either it is trivial, that is $\mathbb{M} = \mathbb{E}^c= \mathrm{op}(R^c)$,
 \item or all the open components of the cut-set of the root edge of $\mathrm{cl}(\mathbb{M} )$ are \emph{prime melonic graphs}. 
\end{itemize}
\item \emph{a prime melonic graph} if it is non trivial and by cutting \emph{all} the edges incident to the vertices $v_{\circ}$ and $v_{\bullet}$ of $\mathbb{M}$ which carry the external 
   half-edges of $\mathbb{M}$ into pairs of half-edges and subsequently deleting $v_{\circ}$, $v_{\bullet}$ and all the half-edges attached to them
   one obtains $D$ (one for each $c'\neq c$) open \emph{melonic graphs}.
   Observe that if $v_{\circ}$ and $v_{\bullet}$ are connected by an edge in $\mathbb{M}$, then this edge is cut twice and this creates a trivial melonic graph.
\end{itemize}

A colored graph $G$ is \emph{melonic} if $\mathrm{op}(G)$ is an open melonic graph. In particular a ring graph $R^c$ is melonic.
\end{definition}

\begin{figure}[ht]
\begin{center}
\psfrag{c}{$c$}
\psfrag{x}{$v_{\circ}$}
\psfrag{y}{$v_{\bullet}$}
\psfrag{c1}{$c'$}
\includegraphics[width=10cm]{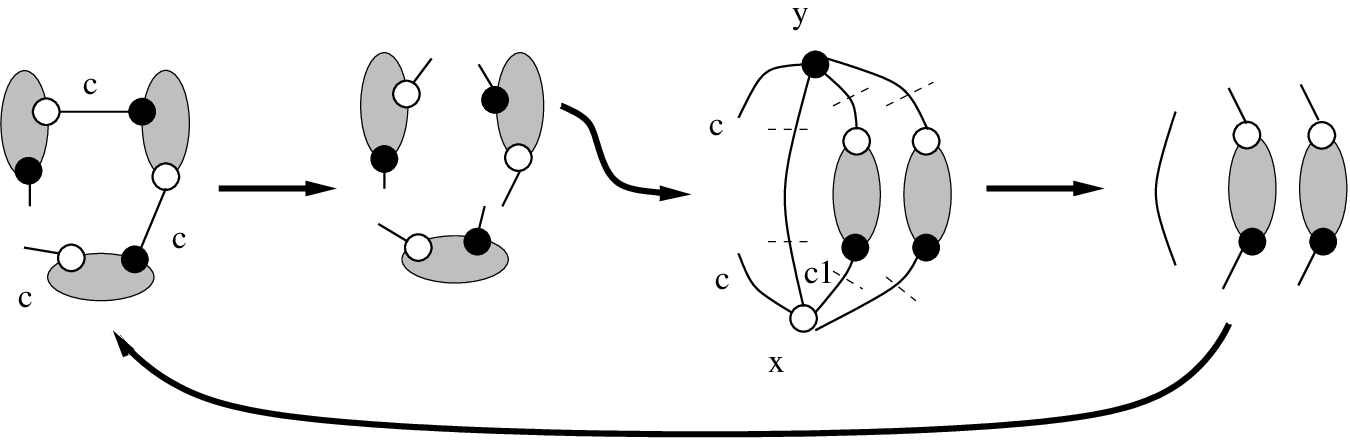}
\end{center}
\caption{Illustration of the definition of melonic graphs.}\label{fig:melons}
\end{figure}

In the previous literature (see \eg \cite{color,lost}) melonic graphs
have been defined as the graphs that can be obtained from some ring
graph $R^c$ by cutting edges and inserting recursively melons
$\mathbb{O}^{c'}$.  Let us now recall the (known) consistence of this
 definition with ours. Let $G[e^c\leftarrow \mathbb{O}^{c } ]$ denote the
rooted colored graph obtained by the \emph{insertion} of a melon
$\mathbb{O}^{c }$ at a non-root edge $e^c$ (of color $c$) of $G$, \ie
by cutting $e^c$ into two half-edges and gluing these half-edges to
those of $\mathbb{O}^{c }$ in the unique way that makes the result
bipartite. The insertion at the root edge $r(G)=\{
v_{\circ},v_{\bullet} \}$ of $G$ can be performed in two ways: either
such that the root of $G[r(G)\leftarrow \mathbb{O}^{c }]$ is hooked to
the white vertex $v_{\circ}$ or such that the new root edge is hooked
to the black vertex $v_{\bullet}$.

\begin{proposition}\label{pro:extract}
A colored graph with root edge of color $c$ is melonic if and only if it can be obtained from the ring graph $R^c$ by a sequence of melon insertions.
\end{proposition}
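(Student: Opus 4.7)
The plan is to prove the two directions separately by induction. For the forward direction, that any graph built from $\Delta$ by melon insertions satisfies the inductive definition, I would induct on the number of insertions. The base case $G=\Delta$ is verified directly: no pair of edges in $\Delta$ forms a 2-edge-cut, so the cut-set of the root reduces to the root itself, the unique closed component is $\Delta$ itself, and cutting the $D$ other edges around its two vertices leaves only trivial open components, so $\Delta$ is prime melonic and hence melonic. For the inductive step, inserting a melon $m$ at an edge $e$ of a melonic graph $G$ introduces the two resulting edges as a new 2-edge-cut and $m$ as a new prime melonic substructure, fitting cleanly into the inductive decomposition of $G$; the other closed components and prime melonic pieces are preserved up to the obvious modification at $e$, and inherit their melonic structure from the induction hypothesis.

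For the converse direction, I would induct on the number of black vertices $k$. The base case $k=1$ forces $G=\Delta$, which is the output of the empty insertion sequence. For $k\geq 2$, the key claim is that any melonic $G$ contains a proper sub-melon $m$ (one not involving the root edge) such that $G\setminus m$ is again melonic; combining this with the identity $G=(G\setminus m)[e\leftarrow m]$ and the induction hypothesis applied to $G\setminus m$ produces the required insertion sequence.

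To exhibit such a sub-melon, I would descend recursively through the inductive decomposition of $G$. The root cut-set partitions $G$ into prime melonic closed components; within any such component with more than two vertices, cut the edges around its root vertices and examine the non-trivial melonic open components, then iterate. Each step strictly decreases the vertex count, so the descent terminates in a leaf prime melonic piece whose non-trivial open components all close to $\Delta$, i.e., are bare open melons. Any such open melon, sitting strictly inside the decomposition, provides a proper sub-melon of $G$.

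The principal obstacle is verifying that $G\setminus m$ remains melonic for the leaf sub-melon $m$ identified above. I would argue that the removal affects only the innermost prime melonic piece housing $m$: that piece loses one of its non-trivial open components (the one that had been $m$ together with its two cut half-edges) while its other open components and its root vertices are unchanged, so it remains prime melonic. The outer layers of the recursive decomposition, including the cut-set of the root of $G$ and all other prime melonic pieces, are entirely unaffected. Propagating this local observation back up the decomposition shows that $G\setminus m$ satisfies the inductive definition, closing the induction.
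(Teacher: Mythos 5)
Your proof is correct and follows the same route as the paper, whose own proof of Proposition~\ref{pro:extract} is simply the phrase ``By induction'': you carry out that induction in both directions, supplying the details (the base case $\Delta$, the locality of an insertion in the recursive decomposition, and the descent to a terminal sub-melon whose removal preserves melonicity) that the paper leaves implicit. The argument is sound; the only slight imprecision is that the descent need not terminate at a prime piece \emph{all} of whose open components are melons --- it suffices, and is what actually happens, that one closed component somewhere in the decomposition equals $\Delta$ --- but this does not affect the conclusion.
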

\begin{proof} By induction, observing that the only prime melonic open graphs with two vertices are the $\mathbb{O}^{c }$s.

\qed
\end{proof}

Our interest in melonic graphs arises from the following two known
results (\emph{e.g.} \cite{color,lost}): 
\begin{proposition}\label{pro:degree}
  The degree of $G$ and $G[e^c\leftarrow \mathbb{O}^{c }]$ are the same.
\end{proposition}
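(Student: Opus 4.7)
The plan is to verify that melon removal changes the counts that enter the degree formula $\delta=\tfrac{1}{2}D(D-1)k+D-F$ in exactly offsetting ways, so that $\delta$ is preserved.

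First I would analyze the structural effect of the removal. Let $c$ be the common color of the two half-edges of $m$, and let $v_1,v_2$ be its two vertices, joined inside $m$ by one edge of each color $c'\neq c$. In $G$, the external ends of the two half-edges are attached to vertices $w_1,w_2$ outside $m$ via edges of color $c$. The removal deletes $v_1,v_2$ and the $D$ internal edges, and identifies the two surviving half-edges into a single color-$c$ edge from $w_1$ to $w_2$. A brief check confirms that the result is again a colored graph: every outside vertex still carries exactly one edge of each color, bipartiteness is preserved (since $v_1,v_2$ have opposite colors, so do $w_1,w_2$), and connectedness is immediate. In particular $k$ decreases by 1 and the total number of edges decreases by $D+1$, as required.

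Given the degree formula, it suffices to prove that $F(G\setminus m)=F(G)-\binom{D}{2}$, since then
\[
\delta(G\setminus m)=\tfrac12 D(D-1)(k-1)+D-\bigl(F-\tfrac12 D(D-1)\bigr)=\tfrac12 D(D-1)k+D-F=\delta(G).
\]
The face count is therefore the only nontrivial thing to track, and I would split it according to whether the color pair involves $c$ or not.

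For a pair $(c_1,c_2)$ with $c_1,c_2\neq c$, the only edges of colors $c_1,c_2$ incident to $v_1$ or $v_2$ are the two internal edges of $m$ of those colors, so any bicolored $(c_1,c_2)$-cycle that meets the melon is forced to traverse $v_1\to v_2$ via $c_1$ and back via $c_2$, yielding a bigonal face entirely contained in $m$. There are exactly $\binom{D}{2}$ such bigonal faces, and they are destroyed by the removal, while all other $(c_1,c_2)$-faces lie outside $m$ and are untouched. For a pair $(c,c')$ with $c'\neq c$, any $(c,c')$-face meeting $m$ must enter via one external $c$-edge, cross the unique internal $c'$-edge between $v_1$ and $v_2$, and exit via the other external $c$-edge; under the removal, the two external $c$-edges merge into one, so such a face is merely shortened by two edges, not destroyed. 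Hence the number of faces of color $(c,c')$ is unchanged for each of the $D$ choices of $c'$.

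Adding these contributions gives precisely $F(G)-F(G\setminus m)=\binom{D}{2}$, and the equality of degrees follows. The only delicate point is the second case above, where one must be confident that a $(c,c')$-face cannot ``get stuck'' in $m$ or split into two: this is immediate from the fact that the $(c,c')$-subgraph is 2-regular and the two external $c$-half-edges at $v_1$ and $v_2$ are the unique continuations of the internal $c'$-edge. Everything else is bookkeeping.
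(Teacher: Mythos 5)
Your proof is correct and follows essentially the same route as the paper's: the paper simply states that $G\setminus m$ has two fewer vertices and $\binom{D}{2}$ fewer faces and invokes the degree formula, while you supply the face-by-face verification (the $\binom{D}{2}$ destroyed bigons internal to the melon versus the $D$ merely shortened faces through the external color) that the paper leaves as "by construction." The bookkeeping and the final cancellation in the degree formula match the paper's argument exactly.
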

\begin{proof}
 By construction $G[e^c\leftarrow \mathbb{O}^{c }]$ has two more vertices and
 $\binom{D}{2}$ more faces than $G$, hence by Equation~(\ref{eq:faces}) they have the same degree.
 
\qed
\end{proof}
\begin{theorem}\cite{color,lost} A rooted colored graph $G$ is melonic if and only if it has degree 0. 
\end{theorem}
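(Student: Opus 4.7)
\smallskip
\noindent\emph{Plan.} The proof splits into the two implications.

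\smallskip
\noindent\emph{Melonic $\Rightarrow$ degree $0$.} I would induct on the number of melon insertions. By Proposition~\ref{pro:extract} every melonic graph is obtained from $\Delta$ by a sequence of melon insertions, and Proposition~\ref{pro:degree} (applied in the insertion direction) shows each such insertion preserves the degree. It therefore suffices to compute $\delta(\Delta)=0$ directly: $\Delta$ has $k=1$ and each of the $\binom{D+1}{2}$ bichromatic subgraphs is a $2$-cycle, so $F=\binom{D+1}{2}$ and Equation~(\ref{eq:faces}) gives $\delta=\tfrac{1}{2}D(D-1)+D-\binom{D+1}{2}=0$.

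\smallskip
\noindent\emph{Degree $0$ $\Rightarrow$ melonic.} I would induct on the number $k$ of black vertices. For $k=1$, connectedness together with the color constraints force $G=\Delta$, which is melonic. For $k\geq 2$ the plan is to produce a \emph{proper sub-melon} $m\subset G$: then $G\setminus m$ has $k-1$ black vertices and, by Proposition~\ref{pro:degree}, still has degree $0$, so it is melonic by induction; applying Proposition~\ref{pro:extract} (reading it as insertion) then gives $G=(G\setminus m)[e\leftarrow m]$ as melonic.

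\smallskip
\noindent\emph{Finding the sub-melon---the main obstacle.} Setting $\delta=0$ in Equation~(\ref{eq:delta'}) gives
\[
2F_1=D(D+1)+\sum_{p\geq 2}\bigl((D-1)p-D-1\bigr)F_p,
\]
and since $(D-1)p-D-1\geq D-3\geq 0$ for $p\geq 2$ and $D\geq 3$, one obtains $F_1\geq\binom{D+1}{2}$. Combining this with~(\ref{eq:doublecount1}) also yields the clean identity $\sum_{p\geq 2}(p-1)F_p=D(k-1)$, which forces a face of length $\geq 4$ whenever $k\geq 2$. A face of length $2$ is exactly a pair of parallel edges, so writing $j_{xy}$ for the multiplicity of the edge between $x$ and $y$ we have $F_1=\sum_{\{x,y\}}\binom{j_{xy}}{2}$. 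A sub-melon is precisely a pair $\{x,y\}$ with $j_{xy}\geq D$, and it is proper whenever $k\geq 2$ (after possibly re-rooting to move the root off the parallel bundle).

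The hard part is the structural claim that some $j_{xy}\geq D$. I would attack it by choosing a face $\mathcal{F}$ of \emph{minimum} length $2p\geq 4$ (which exists by the identity above) of some colors $(c,c')$, and analyzing, for each of the $D-1$ remaining colors $c''$, the color-$c''$ edges incident to the vertices of $\mathcal{F}$: minimality of $2p$ should preclude these edges from leaving $\mathcal{F}$ in a way that would create a shorter non-$2$ bichromatic face, and thus force them to pair up between two consecutive vertices of $\mathcal{F}$, producing the required $D$-fold multi-edge there. This local match-up, running over the $D-1$ non-face colors and using both the abundance $F_1\geq\binom{D+1}{2}$ and the minimality of $\mathcal{F}$ in an essential way, is where I expect the real combinatorial care to be required; the remainder of the argument then cleanly assembles via the two propositions.
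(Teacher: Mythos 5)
Your first implication is correct and is exactly the paper's argument: Propositions~\ref{pro:extract} and~\ref{pro:degree} plus the base case $\delta(\Delta)=0$ (which you compute explicitly from Equation~(\ref{eq:faces}); the paper leaves this implicit). The problem is the converse. The paper does not prove it either: it reduces it to the statement that \emph{every degree-$0$ graph contains a non-root melon} and cites \cite{color,lost} for that fact. You correctly identify the same reduction, and your preparatory computations are all valid ($F_1\geq\binom{D+1}{2}$ from Equation~(\ref{eq:delta'}), the identity $\sum_{p\geq2}(p-1)F_p=D(k-1)$ from~(\ref{eq:doublecount1}) and~(\ref{eq:faces}), and $F_1=\sum_{\{x,y\}}\binom{j_{xy}}{2}$). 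But none of these yields the conclusion you need, namely that some $j_{xy}\geq D$: the lower bound on $F_1$ is perfectly consistent with every multiplicity being $2$ or $3$, so the abundance of length-$2$ faces does not by itself localize into a single $D$-fold multi-edge. That localization is the entire content of the hard direction, and your proposal explicitly leaves it as a plan ("this is where I expect the real combinatorial care to be required"). The minimal-face strategy you sketch is not obviously workable: minimality of a face of colors $(c,c')$ gives no direct control on faces of other color pairs, and the published proofs of this lemma proceed quite differently, by induction on $D$ using the fact that the $D$-colored subgraphs (bubbles) and the jackets of a degree-$0$ graph again have degree (genus) $0$. So the proof has a genuine gap at precisely the point the paper chooses to cite rather than prove.

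Two smaller points. First, "after possibly re-rooting to move the root off the parallel bundle" is not free: melonicity as defined in this paper is a property of the \emph{rooted} graph (it is phrased via the cut-set of the root edge), so invariance under re-rooting would itself require an argument, or you should instead show directly that a degree-$0$ graph with $k\geq2$ contains a melon avoiding any prescribed edge. Second, even once a proper sub-melon $m$ is found, concluding "$G=(G\setminus m)[e\leftarrow m]$ is melonic because $G\setminus m$ is" uses the insertion direction of Proposition~\ref{pro:extract}, which is fine, but you should note that the induction terminates at $\Delta$ only because $j_{xy}=D+1$ forces $k=1$; that step is correct but worth stating.
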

\begin{proof}
 Propositions~\ref{pro:extract} and~\ref{pro:degree} and the remark that the reduce degree of a ring graph is zero immediately imply that melonic graphs
 have degree 0. 
 
 In order to prove the other implication, observe that if $G$ is non trivial and $\delta(G)=0$, then from Eq.~\eqref{eq:delta'} it ensues that $F_1(G)\ge D(D+1)/2$, hence there exists a 
 face with colors $(c_1,c_2)$ of length two (\ie formed by the two edges $e^{c_1}=\{v_{\circ},v_{\bullet}\}$ and $e^{c_2}=\{v_{\circ},v_{\bullet}\}$) of $G$.
 
 If for all $c\neq c_1,c_2$ the two vertices are connected by a unique edge $e^c=\{v_{\circ},v_{\bullet}\}$, then  $G=R^{c'}[e^{c'} \leftarrow \mathbb{O}^{c' }]$ and $G$ is melonic.
 Else, there exists $c$ such that the edge of color $c$ hooked to $v_{\circ}$ (denoted $e_1^c$) is different from the edge of color $c$ hooked to $v_{\bullet}$ (denoted $e^c_2$). 
 As $\delta(G)=0$ all the jackets $G_{\pi}$ of $G$ are planar, including $\pi(c_1)=c, \pi(c) = c_2$, hence $\{ e_1^c,e^c_2 \}$ is a 2-edge-cut of $G$.
 Cutting $e^c_1$ and $e^c_2$ into pairs of half-edges $e^c_1=\Braket{h^1_{\circ} , h^1_{\bullet}}$ and $e^c_2=\Braket{ h^2_{\circ} , h^2_{\bullet}  }$
 and reconnecting the half-edges the other way around into two new edges $f_1 = \Braket{ h^1_{\circ} ,  h^2_{\bullet} } $ and 
 $f_2 = \Braket{ h^2_{\circ} , h^1_{\bullet} }$, $G$ splits into two colored graphs $G_1$ (containing the edge $f_1$) and $G_2$ (containing the edge $f_2$). Some care must be taken with the root: 
\begin{itemize}
 \item  if $r(G)\neq e^c_1,e^c_2$ and $r(G)$ belongs let's say to $G_1$, then $r(G_1) = r(G)$ and $r(G_2)=f_2$,
 \item if $r(G) =e^c_1$ or $r(G)=e^c_2$ then $r(G_1)=f_1$ and $r(G_2)=f_2$.
\end{itemize}

As $k(G_1) + k(G_2) =k(G)$ and $F(G_1) + F(G_2) = F(G) + D$, it follows that $\delta(G_1) = \delta(G_2)=0$ and taking into account that $k(G_1),k(G_2)<k(G)$, the  theorem follows by induction.
 
 \qed
\end{proof}

\subsection{The core decomposition}

Two open colored subgraphs of an open colored graph $\mathbb{G}$ are
\emph{totally disjoint} if their edge sets are disjoint and their
half-edges belong to different edges of $\mathbb{G}$. We shall be
particularly interested in open colored subgraphs that are open
melonic graphs: for short we refer to these as \emph{open melonic
  subgraphs}. Recall that open colored subgraphs are assumed non
trivial, and so are open melonic subgraphs.

The following lemma is illustrated in Fig.~\ref{fig:non-disjoint}.

\begin{lemma}\label{lem:union}
If two open melonic subgraphs of a rooted colored graph $G$ are not
totally disjoint then their union is an open melonic subgraph of $G$.
\end{lemma}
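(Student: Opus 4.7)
The first step is to check that $M_1 \cup M_2$ is a well-defined connected open subgraph of $G$. The hypothesis of being not totally disjoint splits into two cases: either $M_1$ and $M_2$ share at least one edge of $G$, so the union of vertex sets, edge sets and half-edges is already connected; or a half-edge of $M_1$ and a half-edge of $M_2$ belong to the same edge $e$ of $G$, in which case these two halves match in the union to form an internal edge, again yielding connectedness. Using Proposition \ref{pro:extract} together with the theorem characterizing melonic graphs as those of degree $0$, proving the lemma reduces to exhibiting $M_1 \cup M_2$ as obtainable from $\Delta$ by a sequence of melon insertions.

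I would then argue by induction on $|V(M_1)| + |V(M_2)|$. The base case, where one of the subgraphs has only two vertices (a single melon, the minimal melonic open subgraph), is direct: the union coincides with the other subgraph, up to fusing an extra pair of half-edges into an internal edge, and remains melonic. For the induction step, if $M_1 \subseteq M_2$ or $M_2 \subseteq M_1$ we are done; otherwise the plan is to locate a proper sub-melon $m$ of $M_1$ (say) whose two vertices lie outside $V(M_2)$. Removing $m$ yields a smaller open melonic subgraph $M_1^{-}$ (melonicity preserved by Proposition \ref{pro:degree} and the degree-$0$ characterization), while the corresponding removal within $M_1 \cup M_2$ is valid precisely because $m$ does not meet $M_2$, producing $M_1^{-} \cup M_2$ which remains not totally disjoint. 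The induction hypothesis gives that $M_1^{-} \cup M_2$ is melonic, and re-inserting $m$ is a single melon insertion that preserves melonicity by Proposition \ref{pro:extract}. This recovers $M_1 \cup M_2$.

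The main obstacle is to guarantee that such an external sub-melon $m$ always exists when neither containment holds. The plan is to argue by contradiction: suppose every proper sub-melon of $M_1$ meets $V(M_2)$ and symmetrically every proper sub-melon of $M_2$ meets $V(M_1)$. Using the tree structure of nested melon insertions building $M_1$ from $\Delta$ provided by Proposition \ref{pro:extract}, unless $M_1$ is itself a single melon — an easily handled base case — this tree has at least two leaves, corresponding to two vertex-disjoint proper sub-melons of $M_1$. The connectedness of $M_2$, combined with the 2-edge-cut analysis of Lemmas \ref{lem:cyclic} and \ref{lem:useful} applied to the pairs of parallel edges bounding those leaf melons, should then force $V(M_1) \subseteq V(M_2)$ and, by the color and bipartiteness constraints, $M_1 \subseteq M_2$, contradicting the standing hypothesis. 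This delicate combinatorial analysis of how melonic subgraphs can overlap inside a colored graph is the crux of the proof and the step I expect to require the most care.
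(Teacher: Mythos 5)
Your overall strategy --- induction on $|V(M_1)|+|V(M_2)|$ by peeling off a sub-melon of one subgraph that avoids the other --- is genuinely different from the paper's argument, but it has a real gap exactly at the step you yourself flag as the crux, and the sketch you give there does not close it. First, the claim that the insertion tree of Proposition~\ref{pro:extract} for a melonic graph larger than a single melon ``has at least two leaves, corresponding to two vertex-disjoint proper sub-melons'' is false: a melonic graph built by nesting each new melon inside an edge of the previously inserted one is a path in that tree and has exactly one proper sub-melon, the innermost pair of vertices. So the contradiction argument does not get started, and the concluding ``should then force $V(M_1)\subseteq V(M_2)$'' is an admission rather than a proof. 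Second, and more fundamentally, to establish that an external sub-melon exists whenever neither containment holds you must understand \emph{how} two melonic subgraphs can overlap inside $G$, and that is precisely where all the content of the lemma lies; the induction wrapper does not reduce that difficulty. (A smaller inaccuracy: in your base case the union need not ``coincide with the other subgraph'' --- when the two subgraphs meet only through a shared edge of $G$ containing one half-edge of each, the union is strictly larger than $M_2$.)

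The paper's route is short and bypasses all of this. The key observation is that a proper open melonic subgraph $M$ whose two half-edges lie in edges $e,e'$ of $G$ has every one of its vertices of full degree $D+1$ inside $M$, so $\{e,e'\}$ is a 2-edge-cut of $G$ (unless $e=e'$ is the root, in which case $M=\mathrm{op}(G)$) and $M$ is one of its sides. Hence each melonic subgraph is pinned to a 2-edge-cut, and the overlap pattern is governed by cut-sets: if the two cuts lie in different cut-sets, Lemma~\ref{lem:useful} forces one subgraph to contain the other (or total disjointness, excluded by hypothesis); if they lie in the same cut-set, each $M_i$ is a union of consecutive open components of that cut-set, two overlapping intervals union to another interval bounded by a 2-edge-cut, and the closed components of the union are those of $M_1$ together with those of $M_2$, all prime melonic, so the union is melonic by definition. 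To salvage your induction you would in any case have to prove this cut-set picture first in order to locate the external sub-melon --- at which point the induction is no longer needed.
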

\begin{proof}
First observe that if $\mathbb{M}$ is an open melonic subgraph of $G$
with half-edges $h_{\circ}\in e$ and $h_{\bullet}\in e'$, then the edges $e$ and $e'$
of $G$ form a 2-edge-cut of $G$, unless $e=e'=r(G)$, in
which case $\mathbb{M}=\mathrm{op}(G)$. Indeed, since each vertex of $\mathbb{M}$
has the same degree (counting the half-edges) in $\mathbb{M}$ and in $G$, all the edges of $G$ incident
to a vertex of $\mathbb{M}$ are in $\mathbb{M}$, except for $e$ and $e'$.

Now if $\mathbb{M}_1$ and $\mathbb{M}_2$ are two open melonic subgraphs of $G$ with
half-edges $h^1_{\circ}\in e_1$ and $h^1_{\bullet}\in e_1'$, and $h^2_{\circ}\in e_2$ and
$h^2_{\bullet}\in e_2'$ respectively, then the two cuts $\{e_1,e_1'\}$ and
$\{e_2,e_2'\}$:
\begin{itemize}
\item either belong to two different cut-sets: in this case, in view of
  Lemma~\ref{lem:useful}, either $\mathbb{M}_1\subset \mathbb{M}_2$ or $\mathbb{M}_2\subset \mathbb{M}_1$
  (or $\mathbb{M}_1$ and $\mathbb{M}_2$ are totally disjoint but this contradict the
  hypothesis).
\item or belong to the same cut-set: in this case, since $\mathbb{M}_1$ and
  $\mathbb{M}_2$ are not totally disjoint, there are two edges $e''_1\in
  \{e_1,e_1'\}$ and $e''_2\in\{e_2,e_2'\}$ such that $\mathbb{M}_1\cup \mathbb{M}_2$ is
  the component of $G-\{e''_1,e''_2\}$ not containing the root. In
  particular the closed components of the cut-set of the root of
  $\textrm{cl}(\mathbb{M}_1\cup \mathbb{M}_2)$ are the union of the closed components of
  the cut-sets of the roots of $\textrm{cl}(\mathbb{M}_1)$ and
  $\textrm{cl}(\mathbb{M}_2)$ and they are all prime melonic graphs, so that
  $\mathbb{M}_1\cup \mathbb{M}_2$ is melonic.
\end{itemize}
 \qed
\end{proof}

\begin{figure}[ht]
\begin{center}
\includegraphics[scale=.2]{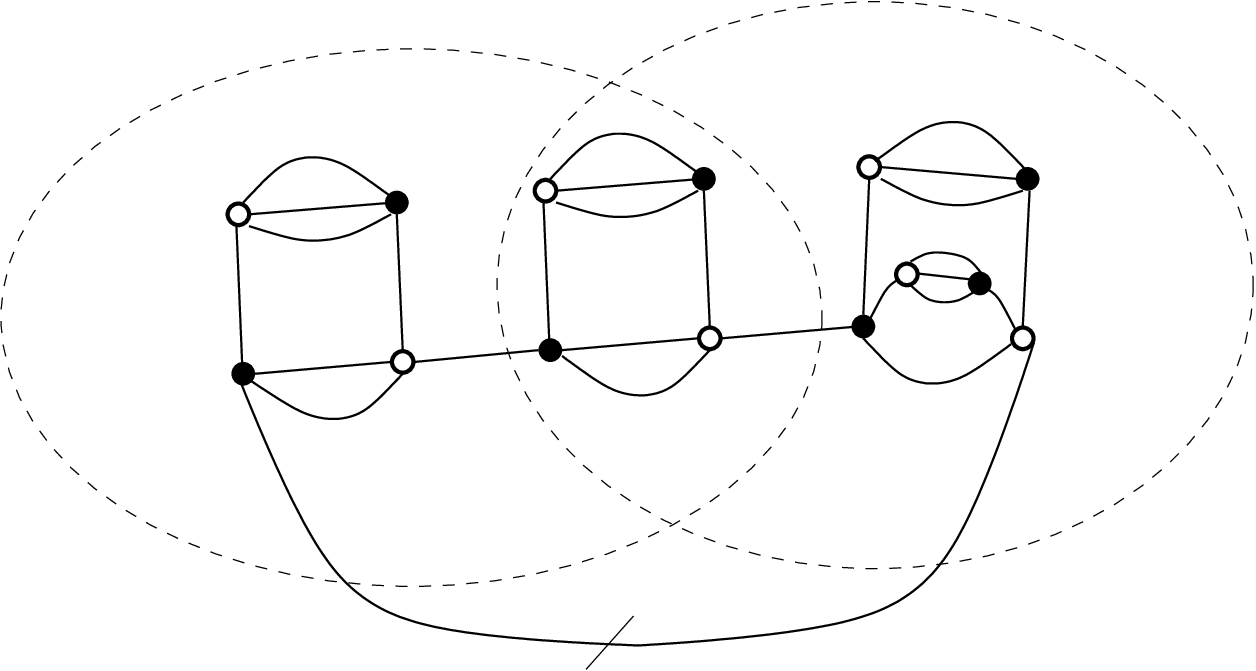}
\end{center}
\caption{Two non-disjoint melonic subgraphs of a colored graph.}\label{fig:non-disjoint}
\end{figure}

A \emph{maximal melonic subgraph} of a colored graph $G$ is a (non trivial) open melonic subgraph which is maximal for inclusion.
\begin{lemma}
The maximal  melonic subgraphs of $G$ are totally disjoint.
\end{lemma}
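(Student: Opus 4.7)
The plan is to reduce to Lemma~\ref{lem:union} by a standard maximality-contradiction argument, since that lemma is precisely designed to upgrade ``not totally disjoint'' into ``their union is melonic''. I would suppose for contradiction that there exist two distinct maximal melonic subgraphs $M_1$ and $M_2$ of $G$ that fail to be totally disjoint, and aim to exhibit a strictly larger proper open melonic subgraph containing one of them.

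First I would record the trivial observation that if $M_1$ and $M_2$ are both maximal for inclusion and distinct, then neither can be contained in the other (otherwise the smaller one would not be maximal). Next I would invoke Lemma~\ref{lem:union}, which applied to $M_1$ and $M_2$ asserts that $M_1 \cup M_2$ is an open melonic subgraph of $G$. A brief verification is needed to check that this union is still a \emph{proper} open subgraph in the sense of Section~\ref{sec:structural}, i.e.\ connected: since $M_1$ and $M_2$ are each connected and the failure of total disjointness means that they share either a common edge of $G$ or two half-edges lying in a common edge of $G$, the union $M_1 \cup M_2$ is indeed connected, hence qualifies as a proper open melonic subgraph of $G$.

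Finally, combining with the first observation, $M_1 \cup M_2$ strictly contains $M_1$ (and $M_2$), contradicting the maximality of $M_1$ (resp. $M_2$). This contradiction forces any two distinct maximal melonic subgraphs of $G$ to be totally disjoint, which is the claim. The only delicate step is the case analysis in Lemma~\ref{lem:union}, which has already been carried out; the present argument is essentially a packaging of that lemma into the form needed for the forthcoming core decomposition.
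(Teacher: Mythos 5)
Your proof is correct and follows exactly the route the paper intends: the paper simply states that the lemma is an immediate consequence of Lemma~\ref{lem:union}, and your maximality-contradiction argument (non-disjoint maximal melonic subgraphs would have a melonic union strictly containing each of them) is precisely the spelled-out version of that one-line deduction. The extra check that the union is connected, hence a proper open subgraph, is a reasonable detail to make explicit.
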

\begin{proof}
This is an immediate consequence of Lemma~\ref{lem:union}.

\qed
\end{proof}

\begin{definition}
The \emph{core} of an open colored graph $\mathbb{G}$ is the open colored graph $\hat {\mathbb{G}}$
obtained from $\mathbb{G}$ by deleting each of its maximal melonic
subgraphs and gluing the resulting pairs of half-edges to reform
edges. The core $\hat G$ of a colored graph $G$ is the colored graph obtained by deleting each of the maximal melonic subgraphs of $G$, i.e. 
$\hat G$ is the closure of the core of $\mathrm{op}(G)$.
\end{definition}

Observe that if one of the two half-edges of $\mathbb{G}$ belongs to a
melonic subgraph then this half-edge is considered to be cut and
re-glued.  If both half-edges of $\mathbb{G}$ belong to the same
melonic subgraph, then $\mathbb{G}$ is melonic and the core is a
trivial one edge graph $\mathbb{E}^c$.

A colored graph is \emph{melon-free} if it does not contains a melonic subgraph.
Observe that according to our conventions, the ring graphs $R^c$ are considered melon-free: in view of Proposition~\ref{pro:extract} they are in fact 
the only melon-free graphs of degree 0. By construction the core of an open colored graph $\mathbb{G}$ (or of a colored graph $G$) is melon-free.

The following characterization (which is in fact not used in the rest
of the paper) more generally relates our construction to the initial
discussion of this section and is a direct consequence of Proposition~\ref{pro:extract}.
\begin{proposition}
The core of a colored graph $G$ is the unique melon-free graph that can be obtained from $G$ by a sequence of melon removals (discussed in Section~\ref{ssec:mremove}).
\end{proposition}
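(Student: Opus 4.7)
The plan is to establish existence (that $\hat G$ is reachable from $G$ by a sequence of melon removals) and uniqueness (that any such sequence ending at a melon-free graph yields $\hat G$) separately.

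For existence, I would reduce each maximal open melonic subgraph of $G$ independently. By the preceding lemma the maximal melonic subgraphs $M_1,\ldots,M_k$ are totally disjoint, so they can be processed in sequence without interference. For each $M_i$, the closure $\mathrm{cl}(M_i)$ is melonic, and by Proposition~\ref{pro:extract} it is obtained from $\Delta$ by a sequence of melon insertions; reversing this sequence gives a list of proper melon removals that reduces $\mathrm{cl}(M_i)$ to $\Delta$. The intermediate melons stay strictly inside $M_i$ (they avoid the root of $\mathrm{cl}(M_i)$, hence the two boundary half-edges of $M_i$) and are therefore proper sub-melons of $G$ as well. After all these removals, $M_i$ has been reduced to $\mathrm{op}(\Delta)$, itself a melon in the intermediate graph, whose final removal replaces $M_i$ in $G$ by a single glued edge --- which is exactly what the definition of $\hat G$ prescribes. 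Iterating over the $M_i$ produces $\hat G$.

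For uniqueness I would establish the invariance $\widehat{G\setminus m}=\hat G$ for every proper sub-melon $m$ of $G$. Granted this invariance, any terminating reduction $G=G_0\to G_1\to\cdots\to G_t$ with $G_t$ melon-free satisfies $\hat G=\hat G_1=\cdots=\hat G_t$, and since a melon-free graph is its own core, this gives $G_t=\hat G$. To prove the invariance I would track the maximal melonic subgraphs. The sub-melon $m$ lies in exactly one maximal melonic subgraph $M$ of $G$: in at least one, by taking a maximal melonic supergraph, and in at most one, because two such supergraphs would by Lemma~\ref{lem:union} merge into a single melonic subgraph containing both, contradicting maximality. Removing $m$ then shrinks $M$ to $M\setminus m$ while leaving the other $M_j$ untouched. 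One checks that the resulting list $\{M\setminus m\}\cup\{M_j\}_{j\neq i}$ consists exactly of the maximal melonic subgraphs of $G\setminus m$, so deleting them yields $\hat G$ once more.

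The main obstacle will be to verify the claim that no new maximal melonic subgraph arises in $G\setminus m$ beyond those inherited from $G$. The delicate point is that the edge freshly glued during the removal of $m$ could a priori enable a melonic subgraph in $G\setminus m$ that has no preimage in $G$; ruling this out requires lifting every melonic subgraph of $G\setminus m$ that uses this new edge to a melonic subgraph of $G$ that contains $m$. Once this lifting is available, maximality in $G\setminus m$ transfers to maximality in $G$ and the invariance follows.
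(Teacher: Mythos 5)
The paper states this proposition without proof (and explicitly remarks that it is not used elsewhere), so there is no official argument to compare yours against; I can only assess your proposal on its own terms. Your overall strategy is the right one: the existence half is essentially complete (totally-disjointness of the maximal melonic subgraphs from the lemma preceding the definition of the core, plus Proposition~\ref{pro:extract} to unwind each $\mathrm{cl}(M_i)$ to $\Delta$ by reversed insertions, with the observation that the intermediate melons avoid the root edge and hence are proper sub-melons of the ambient graph), and the uniqueness half correctly reduces to the invariance $\widehat{G\setminus m}=\hat G$ and correctly isolates the only delicate point, namely that the edge $e$ created by gluing could in principle support a melonic subgraph of $G\setminus m$ with no preimage in $G$.

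The gap is that this delicate point is announced (``the main obstacle will be\dots'', ``once this lifting is available\dots'') but never actually carried out, and it is the entire content of the uniqueness claim. It does go through, and with the paper's own tools: if $N$ is a proper open melonic subgraph of $G\setminus m$ whose edge set contains $e$, then $\mathrm{cl}(N)[e\leftarrow m]$ is obtained from the melonic graph $\mathrm{cl}(N)$ by one melon insertion, hence is melonic by Proposition~\ref{pro:extract}, so $N[e\leftarrow m]$ is a melonic subgraph of $G$ containing $m$ and is therefore contained in the unique maximal one $M\supseteq m$ (uniqueness of $M$ by Lemma~\ref{lem:union}, as you say); if instead $e$ only carries a half-edge of $N$, then $N$ is verbatim a melonic subgraph of $G$. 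Either way every maximal melonic subgraph of $G\setminus m$ is accounted for by $\{M\setminus m\}\cup\{M_j\}_{j\neq i}$, with the degenerate case $M=m$ handled separately (then $N[e\leftarrow m]\subseteq m$ forces $N$ to be trivial, so $e$ lies in no melonic subgraph of $G\setminus m$). You should also verify that $M\setminus m$ is still melonic when $M\supsetneq m$, which follows from Proposition~\ref{pro:degree} and the degree-zero characterization of melonic graphs. With these verifications written out your argument is correct; as submitted it is a sound plan whose crux is flagged rather than proved.
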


By definition of the core $\hat G$ of a colored graph $G$, for
each non-root edge $e=\{w_{\circ},w_{\bullet}\}$ with color $c$ in $\hat G$, 
\begin{itemize}
\item either
there is a maximal melonic subgraph in $G$, which we denote $\mathbb{M}_e$, whose half-edges have
color $c$ and respectively point to $w_{\circ}$ and $w_{\bullet}$, 
\item or there is an edge
$\{w_{\circ},w_{\bullet}\}$ with color $c$ in $G$ and this edge is not involved in any
melonic subgraph of $G$, and in this case we set $\mathbb{M}_e=\mathbb{E}^c$ by
convention.  
\end{itemize}
Considering the root edge $r(\hat G)= \{ v_{\circ}, v_{\bullet}\}$ of a nontrivial core $\hat G$ to be formed of two matched half-edges $h_{\circ}$ and $h_{\bullet}$, 
for each of these half-edges:
\begin{itemize}
\item either there is a maximal melonic subgraph denoted $\mathbb{M}_\circ$ (resp. $\mathbb{M}_\bullet$) whose white
  (resp. black) half-edge is $h_{\circ}$ (resp. $h_{\bullet}$)
\item or the root edge of $G$ is hooked directly to $v_{\circ}$ (resp. $v_{\bullet}$), and in this case we set $\mathbb{M}_{\circ}=\mathbb{E}^c$ (resp. $\mathbb{M}_\bullet=\mathbb{E}^c$).
\end{itemize}
If the core is trivial, that is $\hat G=R^c$, then $G$ is melonic:
$\mathrm{op}(G)$ is then the (possibly trivial) melonic subgraph
$\mathbb{M}_{r(R^c)}$ associated to the unique edge
$r(R^c)$ of $R^c$.

The \emph{core decomposition} of a colored graph $G$ is the tuple $(\hat G; (\mathbb{M}_\circ, \mathbb{M}_\bullet, \mathbb{M}_{e_2},\ldots, \mathbb{M}_{e_{(D+1) k(\hat G) }}))$, where
$e_2,\ldots,e_{(D+1) k(\hat G) }$ is a canonical list of the edges of $\hat G$ excluding the root edge.
The following theorem, exemplified in Fig.~\ref{fig:core}, summarizes and is an immediately consequence
of the above discussion.

\begin{figure}[ht]
\begin{center}
\includegraphics[width=10cm]{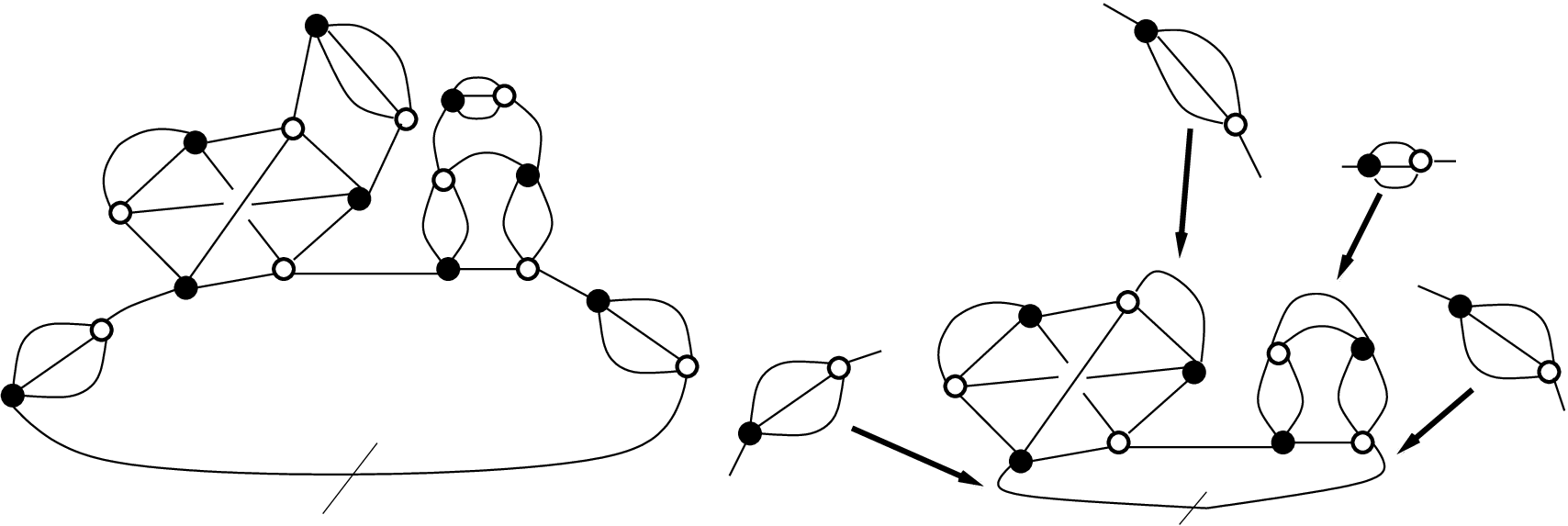}
\end{center}
\caption{The maximal melonic subgraphs of a colored graph, and its core decomposition (only the nontrivial melonic subgraphs are represented).}\label{fig:core}
\end{figure}

\begin{theorem}[Core decomposition]\label{thm:core}
The core decomposition is one-to-one  between
\begin{itemize}
\item colored graphs $G$ with $2k(G)$ vertices and degree $\delta(G)$,
\item pairs $ (\hat{G}; (\mathbb{M}_\circ, \mathbb{M}_{\bullet},\mathbb{M}_2 \ldots, \mathbb{M}_{(D+1)k( \hat{G} )}))$ where 
   \begin{itemize}
   \item $\hat{G}$ is a melon-free colored graph with $2k(\hat{G})$ vertices and degree $\delta( \hat{G})=\delta(G)$, and 
   \item for all $i \in \{ \circ, \bullet, 2 ,\ldots,(D+1)k( \hat{G} ) \} $, $\mathbb{M}_i$ is a possibly trivial open
     melonic graph with $2 k(\mathbb{M}_i)$ vertices,
   \end{itemize}
   such that  $2 k( \hat{G}) + \sum_{i } 2    k(\mathbb{M}_i ) =2k (G) $.
\end{itemize}
\end{theorem}
\begin{proof}
The core decomposition is clearly injective since all the components
of $G$ have been kept in the decomposition as well as the
correspondence between edges of the core and subgraphs. Conversely any
such pair yields a rooted colored graph $G$ by substitution of the $ \mathbb{M}_i$
in $ \hat{G}$ and all substituted melonic subgraphs become totally disjoint
maximal melonic subgraphs in $G$, so that $\hat{G}$ is the core of $G$ and 
the core decomposition gives back the $ \mathbb{M}_i$.

The relation between $k( \hat{G})$, $k(\mathbb{M}_i )$ and the $k (G)$ follows from the remark
that the core decomposition is a partition of the vertex set of $G$,
while the fact that a rooted colored graph and its core have the same  
degree follows from Proposition~\ref{pro:degree}.

\qed
\end{proof}

\newpage

\section{Chains}\label{sec:chains}
\subsection{Maximal chains}
The set of cores of fixed degree is not finite.
This is due to the presence of \emph{chains} of $(D-1)$-dipoles 
(to be defined precisely below) of arbitrary length. It follows 
that, in order to provide a useful classification of graphs 
at fixed degree, we need to refine further the 
core decomposition by identifying and removing maximal chains.
The definition below is slightly different from the one found
in \cite{FG}.

\begin{definition}
A \emph{$(D-q)$-dipole} in a colored graph $G$ is a couple of vertices connected by exactly
$D-q$ parallel edges, \emph{none of which is the root} of the graph. 
A $(D-q)$-dipole is attached to the rest of the graph by $q+1$ pairs of half-edges of the same color. 
It is possible that one of these half-edges belongs to the root edge $r(G)$ or that 
two of these half-edges are matched to form the root edge $r(G)$ (see Fig.~\ref{fig:clardipole} below).

The parallel edges of a dipole are called \emph{internal edges} of the dipole.
The edges of $G$ to which the half-edges of a dipole belong are called \emph{external edges} of the dipole.

A  $(D-1)$-dipole is said to \emph{have external colors $ ( c_1,c_2 ) $} if a pair of half-edges 
has color $c_1$ and the other one has color $c_2$. 
\end{definition}
 
\begin{figure}[ht]
\begin{center}
\psfrag{c1}{$c_1$}
\psfrag{c2}{$c_2$}
\includegraphics[scale=0.5]{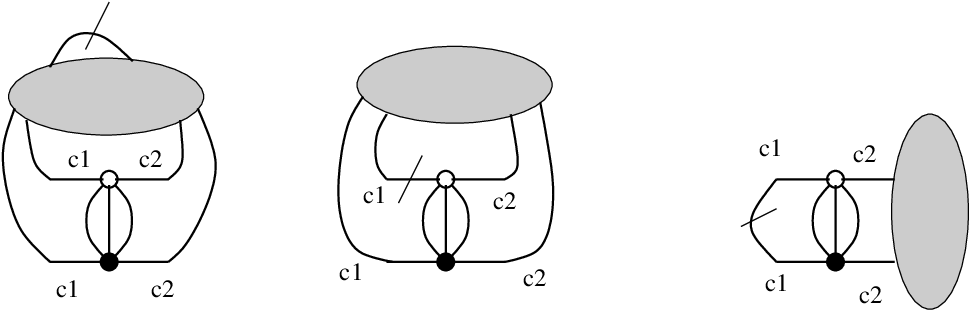}
\end{center}
\caption{Examples of $(D-1)$-dipoles for $D=4$.} \label{fig:clardipole}
\end{figure}

Although dipoles are defined for arbitrary colored graphs, in the
rest of the text we will only be interested in dipoles of melon-free colored
graphs.

Dipoles can join together to form chains of dipoles (as in Figure~\ref{fig:cchains}). The chains of $(D-1)$-dipoles are especially important: as we 
will see below, the degree of a graph does not depend of the length of such chains hence all the cores which only differ by the length of
the internal chains of $(D-1)$-dipoles have the same degree. 

\begin{figure}[ht]
\begin{center}
\psfrag{lc}{$\ell_{\circ}$}
\psfrag{ld}{$\ell_{\bullet}$}
\psfrag{rc}{$r_{\circ}$}
\psfrag{rd}{$r_{\bullet}$}
\psfrag{ub2p+1}{$\underbrace{{}\qquad \qquad{}}_{p=2s+1}$}
\psfrag{ub2p}{$\underbrace{{} \qquad \qquad{}}_{p=2s}$}
\includegraphics[scale=0.5]{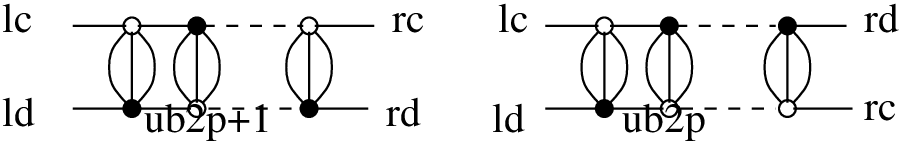}
\end{center}
\caption{Two chains, with odd and even length respectively (with
  $D=4$).} \label{fig:cchains}
\end{figure}

One would like to identify the maximal vertex disjoint chains of $(D-1)$-dipoles in a core. The case $D=3$ is slightly subtle 
and requires a refinement of the naive definition of a chain.

\begin{definition}
A \emph{chain with external colors $(c_1,c_2 )$} (which can coincide) in a melon-free colored graph $G$
is a connected sub-pre-graph  (see Figure~\ref{fig:cchains}) made of:
\begin{itemize}
\item two \emph{left half-edges} $\ell_\circ$ and $\ell_\bullet$ \emph{having the same color} $c_1$,
\item two \emph{right half-edges} $r_\bullet$ and $r_\circ$ \emph{having the same color} $c_2$,
\item $2p$ \emph{internal vertices}, $p\geq1$, forming a sequence
  $d_1,\ldots,d_p$ of $(D-1)$-dipoles,
\end{itemize}
such that:
\begin{itemize} 
\item the white (resp. black) vertex of $d_1$ is incident to $\ell_\circ$
(resp. $\ell_\bullet$),
\item the white (resp. black) vertex of $d_p$ is incident to $r_\circ$
(resp. $r_\bullet$),
\item two half-edges of the dipole $d_i$ are joined to two half-edges of the dipole $d_{i+1}$ for each $i=1,\ldots,p-1$,
\item the root of $G$ is \emph{not one of the internal edges of the chain} (\ie neither an internal edge of a dipole, nor an edge connecting two consecutive dipoles).
Observe however that the root edge can contain one of the half-edges $\ell_\circ, \ell_\bullet,  r_{\circ}, r_\bullet$, or 
it can consist in \emph{any matching} of a pair of half-edges: $ \Braket{ \ell_\circ , \ell_\bullet }, \Braket{ \ell_\circ , r_\bullet } , \Braket { \ell_\bullet , r_\circ }$ or $\Braket{ r_\circ , r_\bullet} $
(see Fig.~\ref{fig:identifychains}).
\item the half-edges  $\ell_\circ,\ell_\bullet$ and respectively $r_\circ,r_\bullet$ are \emph{not matched together in a non-root edge of $G$} 
(\ie the two half-edges at the same end of the chain do not belong to the same non-root edge in $G$, 
as in this case the chain would be a melonic subgraph). 
On the contrary, $\ell_\circ$ and $r_\bullet$ and respectively $\ell_\bullet$ and $r_\circ$ can be matched together (see Fig.~\ref{fig:identifychains}), in a non-root edge.
\end{itemize}

A chain is \emph{proper} if it contains at least 4 internal vertices (or equivalently at least two
$(D-1)$-dipoles). A chain is \emph{maximal} if it is not contained in a larger chain.

\end{definition}

\begin{figure}[ht]
\begin{center}
\includegraphics[scale=.4]{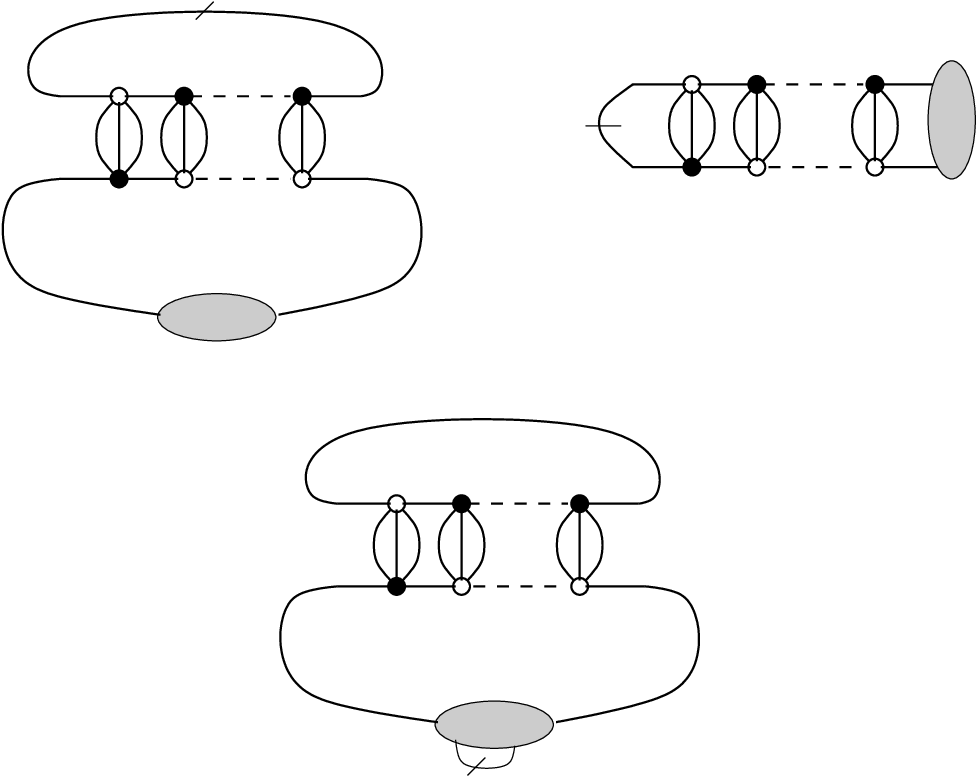}
\end{center}
\caption{Chains with half-edges matched.} \label{fig:identifychains}
\end{figure}

With this definition we have the following important results.

\begin{lemma}\label{lem:uniqueextension}
 Each chain of $G$ can be extended in a unique way to a maximal chain.
\end{lemma}

\begin{proof}
 Let us consider a chain and its two left half-edges $\ell_\circ$ and $\ell_\bullet$. If the two half-edges are incident to 
 a $(D-1)$-dipole in $G$ and neither of the two belongs to the root edge $r(G)$, then the chain can 
 be \emph{extended to the left} by adding the $(D-1)$-dipole. The chain can similarly be \emph{extended to the right}.
 The crucial point is that in order to decide whether a chain can be extended, one only tests the half-edges at the same end of the chain.
 
Extending maximally the chain, one obtains the same maximal chain, \emph{irrespective of the order} in which the left/right extensions are performed.

\qed
\end{proof}

A comment is important at this point. Observe that Lemma~\ref{lem:uniqueextension} works because we have allowed a left and right 
half-edge to be matched in a non-root edge, but it would not work otherwise. Indeed, consider the case at the bottom of Fig.~\ref{fig:identifychains}. 
If the chain can be extended only if the left and right half-edges are not matched together, then one obtains two  different 
maximal extensions (including respectively the last dipole on the left or on the right) of the same chain.

\begin{lemma}
  If $D\geq 4$, two distinct maximal chains in a melon-free 
  colored graph $G$ cannot share an internal vertex.  If $D=3$, two
  distinct maximal proper chains in a melon-free colored
  graph $G$ cannot share an internal vertex.  
\end{lemma}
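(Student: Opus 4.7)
The plan is to assume for contradiction that $P_1$ and $P_2$ are two distinct maximal (proper) pre-chains sharing an internal vertex $v$, and derive a contradiction in each possible configuration. The starting observation is a degree count: a vertex of degree $D+1$ can belong to at most two $(D-1)$-dipoles when $D=3$ (since $2(D-1)=4=D+1$), and to at most one when $D\geq 4$ (since $2(D-1)>D+1$). Hence for $D\geq 4$, any pre-chain through $v$ passes through the unique $(D-1)$-dipole containing $v$, so $P_1$ and $P_2$ share a common dipole $d$ at $v$. For $D=3$, either $P_1$ and $P_2$ still share a dipole through $v$ (and the same argument applies) or $v$ lies in two distinct $2$-dipoles $d_1\in P_1$ and $d_2\in P_2$, which I treat separately.

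\textbf{Shared-dipole case.} Here I analyze the two possible ways of grouping $d$'s four external half-edges into a left pair and a right pair: the chain-style pairing (matching half-edges of the same color) and the twister-style pairing (matching half-edges of different colors). Properness of $P_1$ and $P_2$ forces each of them to fix such a grouping by selecting a neighbor of $d$. If the two groupings disagreed, the two selected neighbor dipoles would necessarily share one of $d$'s external neighbor vertices $u$ (indeed, any two pairs taken from the two distinct $2$-partitions of the same $4$-element set of half-edges intersect in exactly one half-edge). That vertex $u$ would then lie in two distinct $(D-1)$-dipoles, requiring $2(D-1)+1=2D-1$ incident edges at $u$, which exceeds the degree $D+1$ for every $D\geq 3$. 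Hence the two groupings coincide, and once the grouping at $d$ is fixed the extension on each side is uniquely determined at every step (the next dipole, if any, is forced by the two half-edges being extended); maximality of $P_1$ and $P_2$ then gives $P_1=P_2$, contradicting distinctness.

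\textbf{Distinct-dipoles case ($D=3$).} To exclude $v$ lying in two distinct $2$-dipoles $d_1=\{v,w_1\}$ and $d_2=\{v,w_2\}$, one per chain, I use color bookkeeping: $v$'s four edges split as two to $w_1$ with colors $\{a,b\}$ (those of $d_1$) and two to $w_2$ with the complementary colors $\{c,d\}$ (those of $d_2$). Properness forces $P_1$ to extend $d_1$ to a neighbor dipole $d_1^L$; since $v$'s two external half-edges at $d_1$ both point to $w_2$, this neighbor must contain $w_2$, and for it to be a $2$-dipole the two remaining edges of $w_2$ must go to a common white vertex $u_\circ$. Symmetrically properness of $P_2$ forces $w_1$'s two remaining edges to go to a common white vertex $z_\circ$, and comparing the two through the matching constraints of a pre-chain yields $z_\circ=u_\circ$. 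At this stage each of $v,w_1,w_2,u_\circ$ has all its incident edges inside $\{v,w_1,w_2,u_\circ\}$, so this set is a closed subgraph of $G$ and, by connectedness of $G$, coincides with $G$. But then any length-$2$ chain in this $4$-vertex graph has its four external half-edges matched with each other inside $G$, violating the pre-chain condition that two external half-edges cannot belong to the same edge; hence $P_1$ cannot be a valid pre-chain, the desired contradiction. The main technical obstacle is this last case, where color, dipole, and matching constraints must be juggled to force the degenerate $4$-vertex structure; the $D\geq 4$ case is routine once the dipole-uniqueness degree count is in place, while the melon-free hypothesis serves implicitly to guarantee that the neighbor structures arising in the extensions are genuine $(D-1)$-dipoles rather than melons.
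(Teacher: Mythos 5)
Your proof is correct and follows essentially the same route as the paper: a case split on whether the two pre-chains share a $(D-1)$-dipole (where maximality forces equality) or merely a vertex lying in two distinct $(D-1)$-dipoles (ruled out by an edge count for $D\geq4$, and for $D=3$ by forcing the graph to collapse onto a doubled $4$-cycle on which the pre-chain conditions fail). Your write-up is more detailed than the paper's --- in particular the grouping analysis at the shared dipole, which the paper dispatches with the one-line assertion that the dipole has the same neighbours in both chains --- and you conclude the $D=3$ degenerate case via the matched external half-edges rather than via the root edge becoming internal, but these are only cosmetic differences.
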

\begin{proof}
  Assume first that the rooted colored graph $G$ contains two maximal chains
  that share a $(D-1)$-dipole. But then these chains are the maximal left/right extension 
  of the dipole, hence coincide. 

  Now, if two chains share a vertex but no $(D-1)$-dipoles, then this
  vertex must belong to two distinct $(D-1)$-dipoles. Parallel edge
  count shows that this is not possible for $D\geq4$. 
  
  As illustrated by the right hand side of
  Fig.~\ref{fig:nondisjointchains}, for $D=3$ a vertex $u_{\circ}$ can belong to two
  2-dipoles $u_{\circ}-v_{\bullet}$ and $u_{\circ}-w_{\bullet}$. None of the edges incident at $u_{\circ}$ can be the root edge $r(G)$.
  If $u_{\circ}-v_{\bullet}$ belongs to a proper chain,
  then $w_{\bullet}$ has to belong to the same chain (since the chain has at
  least 4 internal vertices), hence there exists a vertex $w'_{\circ}$ which is connected to $w_{\bullet}$ by a pair of non-root edges
  and to $u_{\circ}$ by at least one non-root edge. 
   
   Applying the same reasoning to the $u_{\circ}-w_{\bullet}$ dipole, we conclude that the graph reduces to a double cycle
  of length 4 (on the right in Fig.~\ref{fig:nondisjointchains}), and none of the edges can be the root of $G$, which is impossible.

 \begin{figure}[ht]
\begin{center}
\psfrag{v}{$v_{\bullet}$}
\psfrag{u}{$u_{\circ}$}
\psfrag{w}{$w_{\bullet}$}
\psfrag{w'}{$w'_{\circ}$}
\includegraphics[scale=.5]{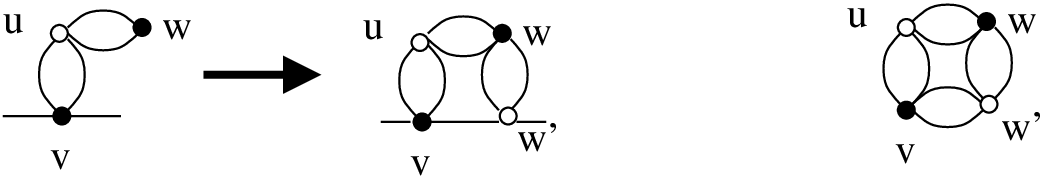}
\end{center}
\caption{A proper chain and a 2-dipole sharing vertices in $D=3$ and the double cycle graph of length 4.} \label{fig:nondisjointchains}
\end{figure}
  
  \qed
\end{proof}

Note that, as shown in Fig.~\ref{fig:nondisjointchains}, in $D=3$ a maximal proper chain can share vertices with a $(D-1)$-dipole (a 2-dipole), but this is not possible for $D \ge 4$.

\subsection{Classification of chains}
 
There are two main types of proper chains, depending on the way the half-edges are involved in the faces of $G$. 

Let us consider a proper chain with external colors $ ( c_1,c_2 )$, and let us denote the external colors of the first dipole from the left 
in the chain $ ( c_1,c' )$.
The two left half-edges $\ell_{\circ}$ and $\ell_{\bullet}$ belong to the same face with colors $ (c_1,c)$ for all $c\neq c'$.
The last face, with colors $ ( c_1,c' )$, travels horizontally to the next dipole. Iterating we are in one of the two cases below:
\begin{itemize}
 \item either the face $ ( c_1,c' )$ \emph{does not travel horizontally} through the entire chain, that is there exists a dipole in the chain
  such that its right external color is neither $c_1$ nor $c'$.
 \item or the face $ (c_1,c' ) $ \emph{travels horizontally} along the chain all the way to the right half-edges.
\end{itemize}
Furthermore the chains can have an even or odd number of dipoles, hence the chains are in one of the two cases below:
\begin{itemize}
 \item either the chain has an odd number of dipoles, hence $l_{\circ}$ and $r_{\circ}$ are both on the top of the chain, and $l_{\bullet}$
    and $r_{\bullet}$ are both on the bottom.
 \item or the chain has an even number of dipoles, hence $l_{\circ}$ and $r_{\bullet}$ are both on the top of the chain, and $l_{\bullet}$ 
    and $r_{\circ}$  are both on the bottom.
\end{itemize}

Correspondingly the chains are classified into:
\begin{description}
\item[\it Broken chains] A proper chain with external colors $(c_1,c_2)$ is
  \emph{broken} (or a $B$ chain) if for all $c\neq c_1$, $\ell_\circ$ and $\ell_\bullet$
  are in the same face of color $\{c,c_1\}$ and consequently $r_\circ$
  and $r_\bullet$ are in the same face of color $\{c,c_2\}$ for all $c\neq c_2$. 
  
  Broken chains are subdivided further into chains with equal external colors $c_2=c_1$ ($ B_{=}$ chains)
  and chains with distinct external colors, $c_2\neq c_1$ ($B_{\neq}$ chains). Furthermore, for each of the two cases 
  the chains can have an even ($B_{ = ; \genfrac{}{}{0pt}{}{\circ \bullet}{\bullet \circ} }$ resp. $B_{ \neq ; \genfrac{}{}{0pt}{}{\circ \bullet}{\bullet \circ} }$ chains) or an odd
  ($B_{ = ; \genfrac{}{}{0pt}{}{\circ \circ}{\bullet \bullet} }$ resp. $B_{ \neq ; \genfrac{}{}{0pt}{}{\circ \circ}{\bullet \bullet} }$ chains)
  number of dipoles.
\item[\it Unbroken chains] Chains that are not broken are \emph{unbroken} ($U$ chains). Let us
  consider separately chains of external colors $(c_1, c_2 \neq c_1)$ and
  $(c_1,c_1)$:
  \begin{itemize} 
  \item external colors $(c_1, c_2\neq c_1)$: $\ell_\circ$ has color $c_1$ and
    belongs to a face of color $\{c_1,c_2\})$ which does not contain
    $\ell_\bullet$. This face travels horizontally and 
    leaves the chain through $r_\circ$ (which has color $c_2$). The chain
    has an odd number of dipoles ($U_{\neq, \genfrac{}{}{0pt}{}{\circ \circ}{\bullet \bullet}}$ chain).  
  \item external colors $(c_1,c_1)$: again there is a face containing
    $\ell_\circ$ and not $\ell_\bullet$ (since the chain is not
    broken), which has to travel horizontally and leave the chain via
    $r_\bullet$. There is only one such face traveling
    horizontally between $\ell_\circ$ and $r_\bullet$, with color
    $\{c_1,c'\}$ (and a parallel face of color $\{c_1,c'\}$ goes through
    $\ell_\bullet$ and $r_\circ$). The color $c'$ is referred to as the
    \emph{secondary color} of the unbroken chain.  The chain has an even number of dipoles ($U_{ = , \genfrac{}{}{0pt}{}{\circ \bullet}{\bullet \circ}}$ chain). 
  \end{itemize}
\end{description}

We call \emph{internal faces} of a chain the faces involving only the internal edges of the chain, and \emph{external faces} of the chain the 
faces involving the half-edges.

\begin{lemma}\label{lem:chainfacescount}
 The numbers $F^{\rm int}(B)$ of internal faces of a broken proper chain $B$ with $2k(B)$ vertices and $F^{\rm int}(U)$ of internal faces of an unbroken proper chain $U$ with $2k(U)$ vertices are respectively:
 \[
   F^{\rm int}(B) = \frac{D(D-1)}{2} k(B) - D \;, \qquad 
   F^{\rm int}(U) = \frac{D(D-1)}{2} k(U) - D+1 \;.
 \]
 \end{lemma}
\begin{proof}
 By connecting the left (resp. the right) external half-edges of the chain $B$ (resp. $U$) into edges $\ell= \Braket{\ell_{\circ},\ell_{\bullet}}$ and $r=\Braket{r_{\circ},r_{\bullet}}$
 and marking $\ell$ as root, the chain becomes a melonic graph $G_{B}$ (resp. $G_{U}$) with $2k(B)$ (resp. $2k(U)$) vertices and 
 $F(G_{B}) = F^{\rm int}(B)  + 2D $ (resp.  $F(G_{U}) = F^{\rm int}(U)  + 2D -1$ faces.
 
 \qed 
\end{proof}

We associate to every kind of chain a \emph{chain-vertex} (see Fig.~\ref{fig:chains}). The chain-vertices have four half-edges, each incident to a black or white 
square (tracking the vertices of the first respectively last dipole in the chain). For broken chains the top and bottom squares are connected 
by all the external faces, while for the unbroken ones one external face travels horizontally. 

\begin{figure}[ht]
\begin{center}
\psfrag{c}{$c_1$}
\psfrag{c'}{$c_2\neq c_1$}
\psfrag{c''}{$c'\neq c_1$}
\psfrag{Bee}{$B_{ = ; \genfrac{}{}{0pt}{}{\circ \bullet}{\bullet \circ} }$}
\psfrag{Bne}{$B_{ \neq ; \genfrac{}{}{0pt}{}{\circ \bullet}{\bullet \circ} }$ }
\psfrag{Beo}{$B_{ = ; \genfrac{}{}{0pt}{}{\circ \circ}{\bullet \bullet} }$}
\psfrag{Bno}{$B_{ \neq ; \genfrac{}{}{0pt}{}{\circ \circ}{\bullet \bullet} }$}
\psfrag{Uno}{$U_{\neq, \genfrac{}{}{0pt}{}{\circ \circ}{\bullet \bullet}}$}
\psfrag{Uee}{$U_{ = , \genfrac{}{}{0pt}{}{\circ \bullet}{\bullet \circ}}$}
\includegraphics[scale=0.5]{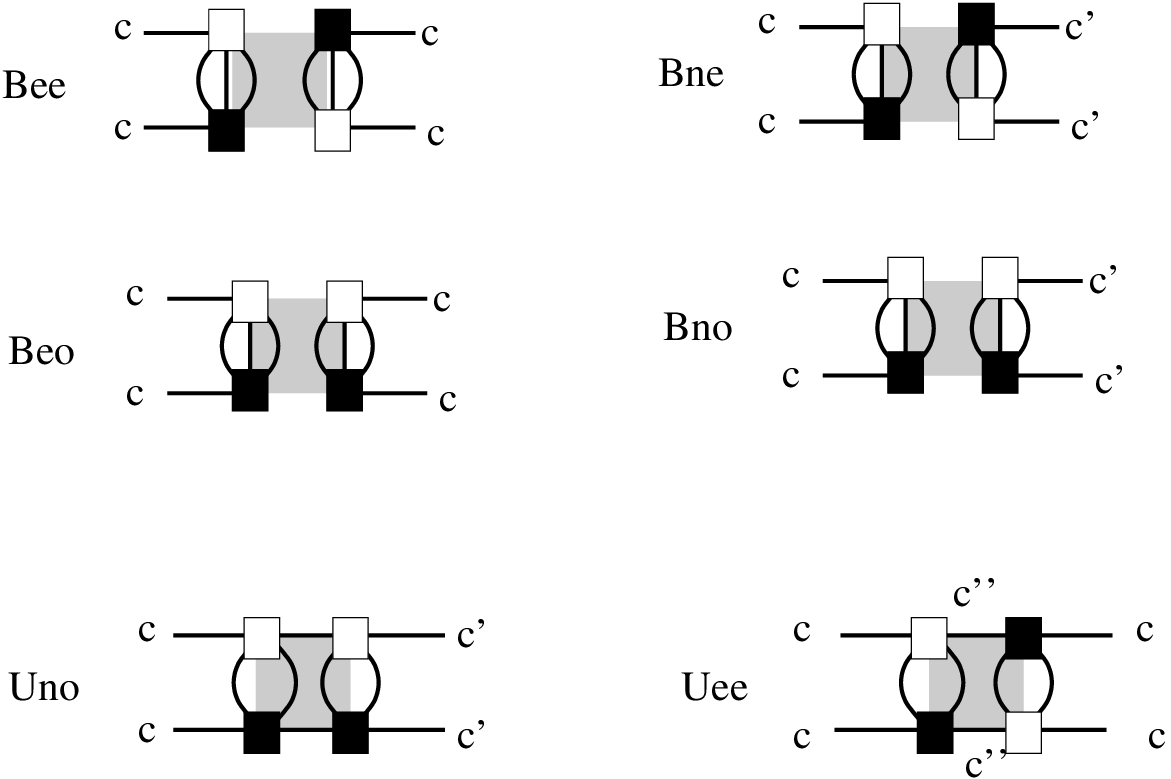}
\end{center}
\caption{The six chain-vertices for $D=3$. \label{fig:chains}}
\end{figure}

Each chain-vertex admits a minimal realization as a proper chain of $(D-1)$-dipoles:
$B_{ = ; \genfrac{}{}{0pt}{}{\circ \bullet}{\bullet \circ} }$ requires four dipoles,
 $B_{ \neq ; \genfrac{}{}{0pt}{}{\circ \bullet}{\bullet \circ} }$ requires two dipoles,
$B_{ = ; \genfrac{}{}{0pt}{}{\circ \circ}{\bullet \bullet} }$  requires three dipoles,
 $B_{ \neq ; \genfrac{}{}{0pt}{}{\circ \circ}{\bullet \bullet} }$ requires three dipoles,
$U_{\neq, \genfrac{}{}{0pt}{}{\circ \circ}{\bullet \bullet}}$ requires three dipoles,
$U_{ = , \genfrac{}{}{0pt}{}{\circ \bullet}{\bullet \circ}}$ requires two dipoles.

\newpage

\section{Schemes}\label{sec:schemes}
\subsection{Reduced schemes and the scheme of a colored graph}
\begin{definition}
A rooted \emph{scheme} is a connected rooted graph with colored edges having two types
of vertices:
\begin{itemize}
\item regular black and white vertices of degree $D+1$, each incident to one edge of each color,
\item chain-vertices of the 6 types 
$B_{ = ; \genfrac{}{}{0pt}{}{\circ \bullet}{\bullet \circ} }, B_{ \neq ; \genfrac{}{}{0pt}{}{\circ \bullet}{\bullet \circ} },B_{ = ; \genfrac{}{}{0pt}{}{\circ \circ}{\bullet \bullet} },
B_{ \neq ; \genfrac{}{}{0pt}{}{\circ \circ}{\bullet \bullet} }, U_{\neq, \genfrac{}{}{0pt}{}{\circ \circ}{\bullet \bullet}},U_{ = , \genfrac{}{}{0pt}{}{\circ \bullet}{\bullet \circ}}$,
having two white and two black squares each,
\end{itemize}
and edges connecting:
\begin{itemize}
 \item a black regular vertex and a white regular,
 \item a black (or white) regular vertex with a white (or black) square,
 \item a black and a white square.
\end{itemize}

A scheme is \emph{reduced} if:
\begin{itemize}
 \item it is melon free, 
 \item the left (and right) half-edges of any chain-vertex are not matched together into a non-root edge (they can however be matched in the root edge),
 \item the left (or right) half-edges of any chain-vertex or $(D-1)$-dipole are not matched both at the same time to the left (or right) half-edges of any  
 other chain-vertex or $(D-1)$-dipole.
\end{itemize}
\end{definition}

\begin{figure}[ht]
  \begin{center}
   \includegraphics[width=8cm]{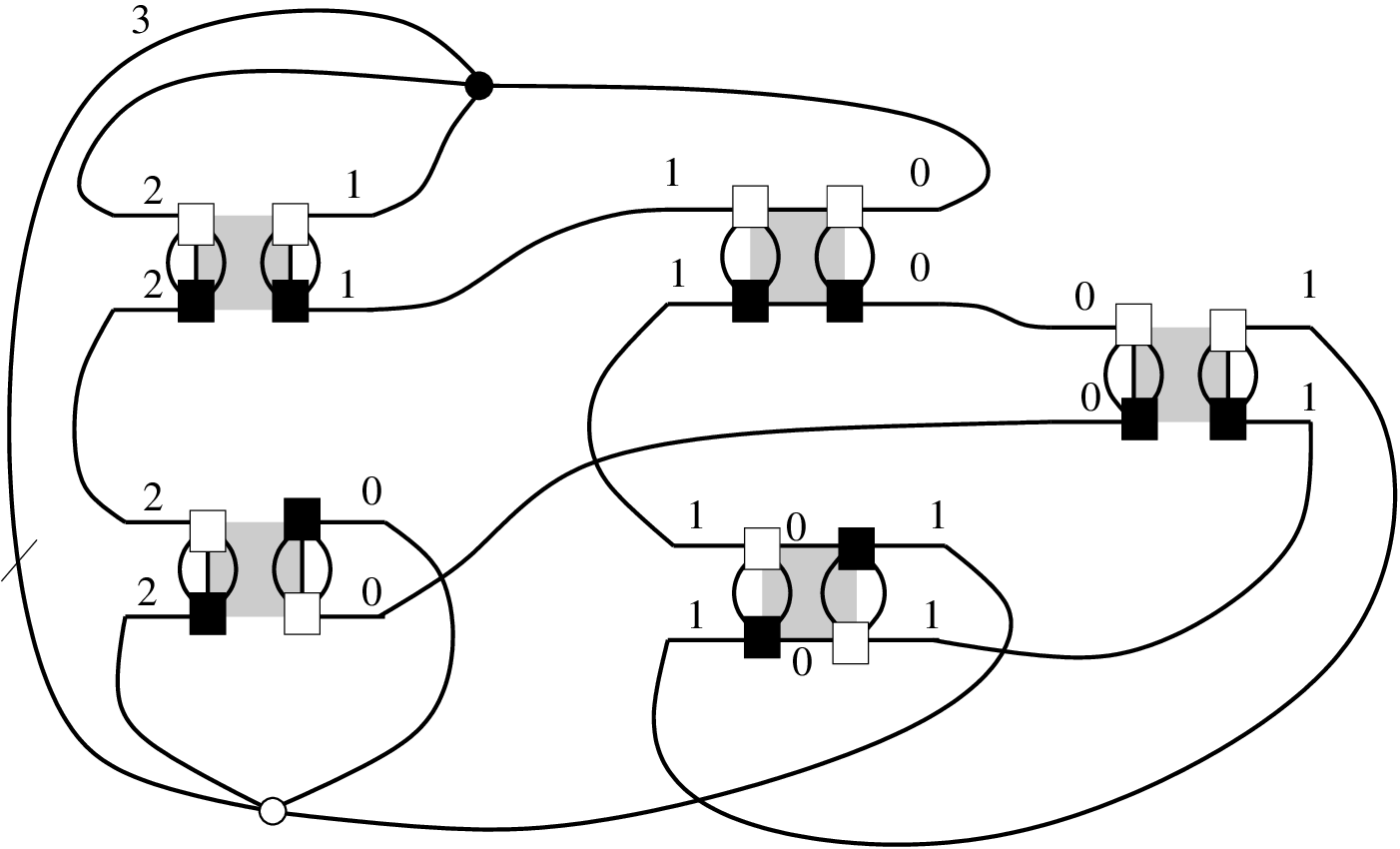}
    \caption{An example of scheme.}
    \label{fig:scheme}
  \end{center}
\end{figure}

\begin{definition}
The \emph{scheme $\tilde G$ of a melon-free colored graph $\hat G$} is obtained by
replacing each maximal proper chain of $\hat G$ by the corresponding
\emph{chain-vertex} (since maximal proper chains are vertex disjoint
this can be done independently for each chain). 
\end{definition}

By construction, the scheme of a melon-free colored graph is reduced. 

Observe that for $D=3$, replacing simultaneously the maximal chains of
a melon-free colored graph by chain-vertices is not always possible:
isolated $(D-1)$-dipoles are chains (although not proper chains), and
are not necessarily vertex disjoint. This is why we restrict our attention to
maximal \emph{proper} chains.

The following theorem is an immediate consequence of the previous discussion.
\begin{theorem}
\label{thm:scheme}
There is a bijection between the set of melon-free colored
graphs $\hat G$ with $2k(\hat G)$ vertices and the set of pairs $(\tilde G;
(C_1,\ldots,C_q))$ where $\tilde G$ is a reduced scheme with $q$
chain-vertices $x_1,\ldots,x_q$, and $C_i$ is a chain of the same type
as $x_i$, such that the total number of vertices in $\tilde G$ and the chains
$C_1,\ldots,C_q$ is $2k(\hat G)$. 
\end{theorem}

The chain-vertices we have introduced allow to keep track in $\tilde G$ of the faces of the melon-free 
graph that are not internal faces of some chain.
The following result is a direct consequence of Lemma \ref{lem:chainfacescount} and the definition of the degree, Eq.~\eqref{eq:defdegree}.

\begin{lemma}\label{lem:degsree-cheme} 
Let $\hat G_1$ and $\hat G_2$ be two melon-free rooted colored graphs
with the same scheme $\tilde G$. Then $\hat G_1$ and $\hat G_2$ have the
same degree.
\end{lemma}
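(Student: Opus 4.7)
The plan is to reduce the lemma to a local invariance: the degree is preserved whenever one replaces a chain of some type at a chain-vertex of $\tilde G$ by another chain of the same type.

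Since $\hat G$ and $\hat G'$ share the same scheme, they differ only in the chains filling the chain-vertices, so it suffices to prove the claim one chain-vertex at a time. Fix one such chain-vertex and write $p,p'$ for the lengths (number of $(D-1)$-dipoles) of the chains in $\hat G$ and $\hat G'$. The vertex counts differ by $k-k'=p-p'$, so in view of $\delta=\tfrac{D(D-1)}{2}k+D-F$ it suffices to show $F-F'=\tfrac{D(D-1)}{2}(p-p')$.

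I would split the faces of $\hat G$ with respect to the chain into those entirely internal to the chain (closed cycles on chain edges) and \emph{transit faces} traversing the chain through its four external half-edges. The pattern of transit faces, i.e.\ the pairing of the four external half-edges by the internal face-paths, is determined by the chain-vertex's type and external configuration (this is precisely the data encoded by the three chain-vertex types introduced earlier), so transit faces contribute identically to $F$ and $F'$. The identity $F-F'=\tfrac{D(D-1)}{2}(p-p')$ then reduces to the claim that the number of internal closed faces of a chain depends only on its type and length, and is affine in $p$ with slope $\tfrac{D(D-1)}{2}$.

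To establish this, I would relate any two chains of the same type by a sequence of type-preserving local modifications of the $(D-1)$-dipole sequence, and compute the face-count change induced by each. For broken chains the minimal type-preserving modification is the insertion or removal of a single dipole (and possibly a swap of two successive connecting-edge colors); for unbroken chains the color alternation along the chain forced by the horizontal face of color $(i,j)$ is $2$-periodic, so only pair insertions or removals are type-preserving. In either case, a direct enumeration shows that each newly inserted dipole contributes $\binom{D-1}{2}$ internal digon faces and, together with the adjacent connecting edges, produces additional length-$4$ closed cycles, while pre-existing closed cycles at the modification site are simply lengthened (without net change in count); summing yields the required $\tfrac{D(D-1)}{2}\Delta p$.

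The main obstacle is the three-way case analysis (broken; unbroken with $i\neq j$; unbroken with $i=i$ and secondary color $j$), particularly the careful bookkeeping of how closed faces at the insertion site are reshaped---some pre-existing digons may be absorbed into larger cycles while new digons and length-$4$ cycles appear---to verify the uniform slope $\tfrac{D(D-1)}{2}$ in all three cases. Once verified, the local invariance iterates over all chain-vertices to give $\delta(\hat G)=\delta(\hat G')$.
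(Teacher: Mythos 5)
Your route differs from the paper's in organization. The paper proves this lemma by analyzing the effect on the degree of \emph{removing} a chain-vertex altogether (Section~\ref{sec:chain-vertex-deletion}): the removal is reduced to the removal of a single $(D-1)$-dipole from a minimal-length representative plus melon deletions, and the case analysis of $(D-q)$-dipole removals shows that the resulting degree shift (splitting over components, or a drop by $D$ or by $D-2$) depends only on the type of the chain-vertex and on connectivity data visible in the scheme; two graphs with the same scheme therefore reduce by the same sequence of removals, with the same degree increments, to the same graph. You instead compare the two fillings of a chain-vertex directly by counting faces, splitting them into transit faces (whose pairing of the four external half-edges is indeed determined by the chain-vertex type, including the secondary color in case (c)) and internal closed faces, reducing everything to the identity $F-F'=\tfrac{D(D-1)}{2}(p-p')$. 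This is a legitimate and somewhat more self-contained organization, and the identity you need is correct: it can be cross-checked against the paper's computation, since $\delta(G)=\delta(G')+\mathrm{const}$ together with $k(G)=k(G')+p$ forces the internal face count of a chain to be affine in $p$ with slope $\tfrac{D(D-1)}{2}$.

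The gap is that this identity \emph{is} the content of the lemma and you do not actually establish it: you assert that ``a direct enumeration shows'' each inserted dipole contributes $\binom{D-1}{2}$ digons plus enough length-$4$ cycles, with ``no net change'' in pre-existing closed cycles, and you yourself name this verification as the main obstacle. The bookkeeping is more delicate than your sketch suggests. Since the two connecting edges between consecutive dipoles $d_k$ and $d_{k+1}$ carry a single color $c_k$, the short internal cycles supported on that connecting pair have colors $(c_k,x)$ with $x$ ranging over the common parallel colors, and their number is $D-1$ or $D-2$ according to whether $c_{k-1}=c_{k+1}$; inserting a dipole destroys the cycles on the old connecting pair and creates new ones on each of the two new pairs, and for broken chains (whose connecting colors need not alternate) the deficit is made up by longer internal cycles snaking through several dipoles. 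So ``pre-existing cycles are simply lengthened without net change in count'' is not literally true in all configurations, and the type-preserving moves needed to relate two arbitrary broken chains include color swaps that require their own invariance check. Until that case analysis is carried out (or replaced by the paper's dipole-removal computation, which does the equivalent bookkeeping once and for all), what you have is a correct plan rather than a proof.
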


\begin{definition}
The \emph{degree of a reduced scheme} is the common degree of all cores that have it as scheme. 
\end{definition}

\subsection{Schemes of fixed degree}

Our main interest in schemes is that unlike the set of cores of fixed
degrees, the set of reduced schemes of fixed degree is finite.

\begin{theorem}\label{thm:finiteness}
The number of reduced schemes with a fixed degree is finite.
\end{theorem}

\begin{proof} 
The proof of this is divided into two parts. First we analyze the iterative elimination
of chain-vertices, $(D-1)$-dipoles, and in the cases $D\ge 4$, $(D-2)$-dipoles in a reduced scheme, to prove the following result:
\begin{proposition}\label{prop:schemebound}
The number of chain-vertices, $(D-1)$, and for $D\geq 4$,
$(D-2)$-dipoles in a reduced scheme of degree $\delta$ is bounded by
$19\delta$.
\end{proposition}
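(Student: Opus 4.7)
The plan is to proceed by iterative reduction, quantifying, for each of the three object types (chain-vertices, $(D-1)$-dipoles, and, when $D\geq 4$, $(D-2)$-dipoles), its ``cost'' in terms of the degree. The core tool is a precise formula for how the degree changes when one contracts such a dipole.

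\smallskip

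First, I would use equations (\ref{eq:doublecount1})--(\ref{eq:delta'}) to compute the degree change induced by removing a single $(D-1)$-dipole of color $(i,j)$: the vertex count drops by $2$ and the number of bicolored faces changes in a way that depends only on which local bicolored cycles traverse the dipole. The expected output is that the degree is non-increasing under such a removal, and decreases by a positive integer \emph{unless} the dipole lies in a specific local configuration (i.e., it sits inside a chain of $(D-1)$-dipoles of length $\geq 2$, or a twister). This matches the classification of the four types of chain-vertices given in Section~\ref{sec:schemes} and is essentially the content of the postponed lemma to be proved in Section~\ref{sec:chain-vertex-deletion}. A parallel analysis for $(D-2)$-dipoles (when $D\geq 4$), with a case split on the two pairs of external colors, should show that their removal either strictly decreases the degree or reveals a chain or twister structure.

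\smallskip

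Next, I would treat chain-vertices by unfolding them into the underlying chain of $(D-1)$-dipoles and invoking the previous step: since a chain-vertex encodes a proper chain containing at least two $(D-1)$-dipoles and the graph is melon-free, I expect its ``complete removal'' to decrease the degree by at least $1$. Combined with the fact that the scheme is reduced -- so every remaining $(D-1)$-dipole is isolated and every $(D-2)$-dipole is not part of an admissible chain -- this gives that each isolated dipole costs at least a fixed positive fraction of a degree unit.

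\smallskip

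Finally, I would set up an iterative procedure that removes objects in order (chain-vertices, then isolated $(D-1)$-dipoles, then $(D-2)$-dipoles), tracking the degree consumed and any new objects created. Aggregating the bounds from Steps 1--2 with multiplicities (e.g., one obtains something like $N_{\mathrm{chain}}\leq \delta$, $N_{(D-1)}\leq 2\delta$, $N_{(D-2)}\leq 2\delta$, summing to $5\delta$) yields the announced linear bound.

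\smallskip

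The main obstacle will be the face-counting case analysis for $(D-2)$-dipoles, whose external half-edges can have several color configurations, each producing a different merging pattern of bicolored faces. A secondary subtlety is that removing a dipole can merge external edges and thereby create new dipoles or lengthen existing chains in the reduced graph; one must argue that such cascading effects produce only boundedly many new objects per removal, so that the naive charging to the degree decrease is not overwhelmed and the constant remains $5$.
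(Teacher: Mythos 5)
Your Step 1 already contains an error that propagates into the whole charging scheme: the degree change under removal of a $(D-1)$-dipole is \emph{not} ``strictly decreasing unless the dipole sits in a chain or twister''. The correct dichotomy (which the paper establishes by the face count $F(G)=\sum_i F(G_i)+\binom{D-q}{2}-\binom{q+1}{2}+2\sum_i r_i$) is between \emph{separating} and \emph{non-separating} removals: a non-separating $(D-1)$-dipole removal decreases the degree by $D$ or $D-2$, but a completely separating one leaves the total degree unchanged, merely distributing it as $\delta(G)=\delta(G_1)+\delta(G_2)$ --- regardless of whether the dipole belongs to a chain. Consequently your final aggregation ``each object costs at least a fixed positive fraction of a degree unit'', giving $N_{\mathrm{chain}}\leq\delta$, etc., cannot work: separating chain-vertices and dipoles cost nothing in degree. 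Indeed your bound $N_{\mathrm{chain}}\leq\delta$ is already violated by actual schemes: the dominant schemes of degree $\delta$ for $D=3$ are binary trees with $2\delta-1$ separating broken chain-vertices plus $\delta$ unbroken loop chain-vertices, i.e.\ $3\delta-1$ chain-vertices in total.

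The missing idea is how to bound the number of separating removals, which equals the number of connected components created, and in particular how to control the components of degree $0$ (they absorb no degree at all). The paper does this by \emph{marking} the new edges created at each removal and double counting marks: non-separating removals create at most $2$ or $3$ marks each and their number is at most $2\delta$ because each such removal does strictly decrease the degree; separating removals create at most $2$ marks each; and --- the key lemma --- every non-root degree-$0$ component of the final graph must carry at least \emph{three} marks, since a degree-$0$ component with only two marks would be a melonic graph with at most two melons, i.e.\ a $(D-1)$-dipole or a chain already present in the scheme, contradicting reducedness. Combining $\sum_c m(c)\geq |C_+|+3|C_0|$ with the upper bound on the total number of marks yields $|C_0|\leq 3\delta$, hence $p=|C_+|+|C_0|-1\leq 4\delta$ separating removals and $5\delta$ objects in total. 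Without some substitute for this component-versus-mark bookkeeping, your outline does not close.
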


\begin{proof} See Section~\ref{sec:prf1}.

\qed
\end{proof}

Once this result is granted, we observe that the minimal realization
of any chain-vertex consists of at most four $(D-1)$-dipoles, so that
there is an injective map from the reduced schemes of degree $\delta$ into melon free colored 
graphs of degree $\delta$ with at most $76\delta$ $(D-1)$- and $(D-2)$-dipoles. 

We then establish the following result:
\begin{proposition}\label{prop:finitdipole}
For $D=3$, the number of melon free colored graphs with fixed degree and a fixed number of
2-dipoles is finite.  For $D\geq4$, the number of melon free colored graphs with fixed degree and 
fixed numbers of $(D-1)$-dipoles and
$(D-2)$-dipoles is finite.
\end{proposition}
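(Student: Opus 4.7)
The plan is to bound the vertex count $2k$ of any colored graph satisfying the hypotheses, since there are only finitely many $(D+1)$-edge-colored $(D+1)$-regular bipartite graphs on a bounded labelled vertex set. The key input is the elementary identity
\[
F_1=\sum_{m\geq 2}\binom{m}{2}N_m,
\]
where $N_m$ denotes the number of vertex pairs joined by exactly $m$ parallel edges (equivalently, the number of $m$-dipoles), combined with Equation~(\ref{eq:delta'}).

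For $D\geq 4$, I would rewrite Equation~(\ref{eq:delta'}) as
\[
\sum_{p\geq 2}\big((D-1)p-(D+1)\big)F_p=(D+1)\delta+2F_1-D(D+1),
\]
whose left-hand coefficients are each at least $D-3\geq 1$. A simultaneous bound on $\delta$ and $F_1$ therefore bounds every $F_p$, hence $F$, and finally $k$ via Equation~(\ref{eq:faces}). The hypothesis bounds $N_{D-1}$ and $N_{D-2}$ directly, while $N_{D+1}\leq 1$ is trivial since such a pair would use all colors and disconnect the rest. The remaining contributions to $F_1$---coming from $N_D$ (melons) and from intermediate multiplicities $2\leq m\leq D-3$ relevant when $D\geq 5$---must be controlled by a structural local analysis: at any vertex only $D+1$ colored edges leave, so two small-multiplicity pairs sharing a vertex are forced by the $(D+1)$-color constraint either to combine into a $(D-1)$- or $(D-2)$-dipole, or to propagate in a way that caps their number in terms of the already controlled quantities.

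The main obstacle is the case $D=3$, because the coefficient of $F_2$ in Equation~(\ref{eq:delta'}) vanishes and so length-$4$ faces are not automatically bounded. My plan is to bound $F_2$ directly in terms of $N_2$: two length-$4$ faces that share a pair of vertices must, by the $4$-color constraint acting on that pair, force a pair of parallel edges there, yielding a $2$-dipole; a double-counting argument on the incidences between length-$4$ faces and vertex pairs then bounds $F_2$ as a function of $N_2$. Once $F_2$ is controlled, Equation~(\ref{eq:delta'}) bounds the remaining $F_p$ for $p\geq 3$, and Equation~(\ref{eq:faces}) bounds $k$ as before. The hard part in both regimes is thus the passage from the hypothesised large-dipole counts to a bound on the small-face counts $F_1$ (and $F_2$ for $D=3$); once this is in hand the face-count identities conclude mechanically.
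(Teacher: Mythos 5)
Your overall skeleton --- control the short faces, then use Equations~(\ref{eq:delta'}) and~(\ref{eq:faces}) to bound first the $F_p$ and then $k$ --- is the same as the paper's, but both of the steps you yourself flag as ``the hard part'' are genuinely missing, and in each case the idea you sketch for filling them does not work. For $D\geq4$ you plan to bound $F_1$ by a constant depending only on $\delta$, $t_1=N_{D-1}$ and $t_2=N_{D-2}$. This is not possible: for $D\geq5$ a vertex can lie on up to $\alpha(D)>0$ faces of length $2$ (for instance by belonging to a $(D-3)$-dipole and a $4$-dipole when $D\geq7$) without belonging to any $(D-1)$- or $(D-2)$-dipole, and nothing in the hypotheses caps the number of such vertices. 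The paper therefore only proves the linear-in-$k$ bound $F_1\le t_1\binom{D-1}{2}+t_2\binom{D-2}{2}+\alpha(D)k$, and the key idea you are missing is what comes next: substituting this into both (\ref{eq:delta'}) and (\ref{eq:faces}) and \emph{eliminating $k$} between the two, checking that $\alpha(D)<\frac12 D(D+1)$ so that the resulting coefficients of $F_p$, $p\ge2$, stay strictly positive. Your hoped-for ``structural local analysis'' capping the number of intermediate dipoles ``in terms of the already controlled quantities'' is exactly the assertion that fails; only their density per vertex is controlled. (Note also that the intended setting is melon-free graphs of fixed degree --- otherwise arbitrarily long necklaces of $D$-dipoles already defeat any constant bound on $F_1$ --- which is how your $N_D$ and $N_{D+1}$ terms are disposed of.)

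For $D=3$ the gap is more serious. Two distinct faces of the same color pair $(c,c')$ are distinct connected components of the $(c,c')$-subgraph, hence vertex-disjoint; and two length-$4$ faces of different color pairs can share the two endpoints of a single (non-doubled) edge with no parallel edges anywhere nearby. So the local forcing you invoke is false, and $F_2$ cannot be bounded by a function of $N_2$ through incidence double counting at vertex pairs. The paper's argument here is global and topological: each jacket of $G$ is a $4$-regular map whose genus is at most $(D-1)!\,\delta$, and a map of bounded genus with boundedly many non-quadrangular faces and sufficiently many vertices must contain a vertex all of whose faces within distance $3$ are quadrangles; the color constraints then force this neighborhood to close up into a quotient of a $4\times4$ toroidal grid, contradicting the connectedness of a large graph. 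Some appeal to the bounded genus of an embedding (or an equivalent global invariant) seems unavoidable, since a purely local picture of a $4$-valent quadrangulation is indistinguishable from an arbitrarily large grid.
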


\begin{proof} See Section~\ref{sec:prf2}.

\qed
\end{proof}

Theorem~\ref{thm:finiteness} is an immediate consequence of these two propositions.

\qed
\end{proof}

\newpage

\section{Proof of Proposition~\ref{prop:schemebound}}\label{sec:prf1}

As a preliminary we analyze the effect that the deletion of a single
$(D-q)$-dipole has on the degree. We then extend the analysis to the deletion of a
chain-vertex. The conclusion of this analysis is that the deletion of a $(D-q)$-dipole:
\begin{itemize}
 \item either separates the graph into $q+1$ connected components (completely separating deletion), in which case the degree is distributed among the connected components,
 \item or it separates the graph into less than $q+1$ connected components, in which case the degree strictly decreases.
\end{itemize}
Similarly, for chains, the deletion:
\begin{itemize}
 \item either separates the graph into two connected components, in which case the degree is distributed among the connected components,
 \item or it does not separate the graph, in which case the degree strictly decreases.
\end{itemize}

\subsection{Analysis of a $(D-q)$-dipole removal}
 
Let us define more precisely the \emph{removal} of a $(D-q)$-dipole
(with $1\le q\le D-2$) in a colored graph $G$: assuming the half-edges
of the dipole have colors $c_0,c_1,\dots c_q $, we delete the two
vertices, the internal edges and the half-edges of the dipole and form one new
edge for each color $c_0,c_1,\dots c_q $ with the remaining pairs of
half-edges in $G$. 

By construction, the number of vertices decreases by 2 and
the number of edges decreases by $D+1$ at a deletion. In order to track the
variation of the degree we need to analyze more precisely the
variation of the number of faces. This is somewhat involved, as it depends on
the number of connected components the graph separates into upon
removal of the $(D-q)$-dipole and also on whether couples of new
edges belong to a same face or not.

\paragraph{\it Connected components and faces after a dipole removal.}
We denote:
\[G_1,G_2,\dots G_C \;, \]
the $C$ connected components obtained
after the removal of the $(D-q)$-dipole, $1\le C\le q+1$.  As the
removal of the dipole deletes two vertices we have 
\[ k(G) = k(G_1) + k(G_2) + \dots + k(G_C) + 1\; .  \] 

We denote $t_1$ the number of new edges belonging to $G_1$, $t_2$ the
number of new edges belonging to $G_2$, and so on. We have:
\[
 t_1+t_2 + \dots + t_C = q+1 \; .
\]
Without loss of generality we can assume that the colors of the $t_1$
new edges belonging to $G_1$ are $c_0,\dots c_{t_1-1}$, the colors of
the $t_2$ new edges belonging to $G_2$ are $c_{t_1},\dots
c_{t_1+t_2-1}$ and so on. 

The faces affected by the $(D-q)$-dipole removal are the ones
containing at least one of its vertices. They fall into three categories:
\begin{itemize}
 \item Faces with colors $\{c,c'\}$ such that $\{c,c'\} \cap
   \{c_0,c_1,\dots c_q \} = \emptyset$. For each of the
   $\binom{D-q}{2}$ choices of such pairs, exactly one face
   of degree two (an internal face of the dipole) is deleted
   by the removal of the dipole:
   \[
   F^{cc'}( G)=F^{cc'}( G_1)+F^{cc'}( G_2) + \dots + F^{cc'}( G_C) + 1 \; , 
   \] 
\item Faces with colors $\{ c_i ,c \}$, with $c_i\in \{ c_0,c_1,\dots c_q\}$
  and $c \notin \{ c_0,c_1,\dots c_q\} $. For each of the
  $(D-q)(q+1)$ choices of such pairs, exactly one face is incident
  to the dipole: this face has length at least four in $G$ and the dipole
  removal reduces its length by 2, so that the number of faces with these
  colors is unchanged:
  \[
  F^{c_ic}(G)=F^{c_ic}(G_1)+F^{c_ic}(G_2)+\dots + F^{c_ic}(G_C) \; ,
  \]
\item Faces with colors $\{ c_i, c_j \}$ with $\{c_i,c_j\} \subset
  \{c_0,c_1,\dots c_q \}$. For each of the $\binom{q+1}{2}$ choices
  of such colors, either one or two faces are incident to the dipole.
  In this case we distinguish between two possibilities:
     \begin{itemize}
      \item {\bf Type $a$.} (Fig.~\ref{fig:facesplit}) The four edges
        of color $c_i$ and $c_j$ belong to the same face of color $\{c_i,c_j\}$
        in $G$. Upon removal of the $(D-q)$-dipole this face of color
        $\{c_i,c_j\}$ splits into two disjoint faces:
              \[
               F^{c_ic_j}(G) = F^{c_ic_j}(G_1)+F^{c_ic_j}(G_2)+\dots + F^{c_ic_j }(G_C) -1 \; .
              \]
        
       \begin{figure}[ht]
       \begin{center}
        \psfrag{c1}{$c_1$}
        \psfrag{c2}{$c_2$}
            \includegraphics[width=7cm]{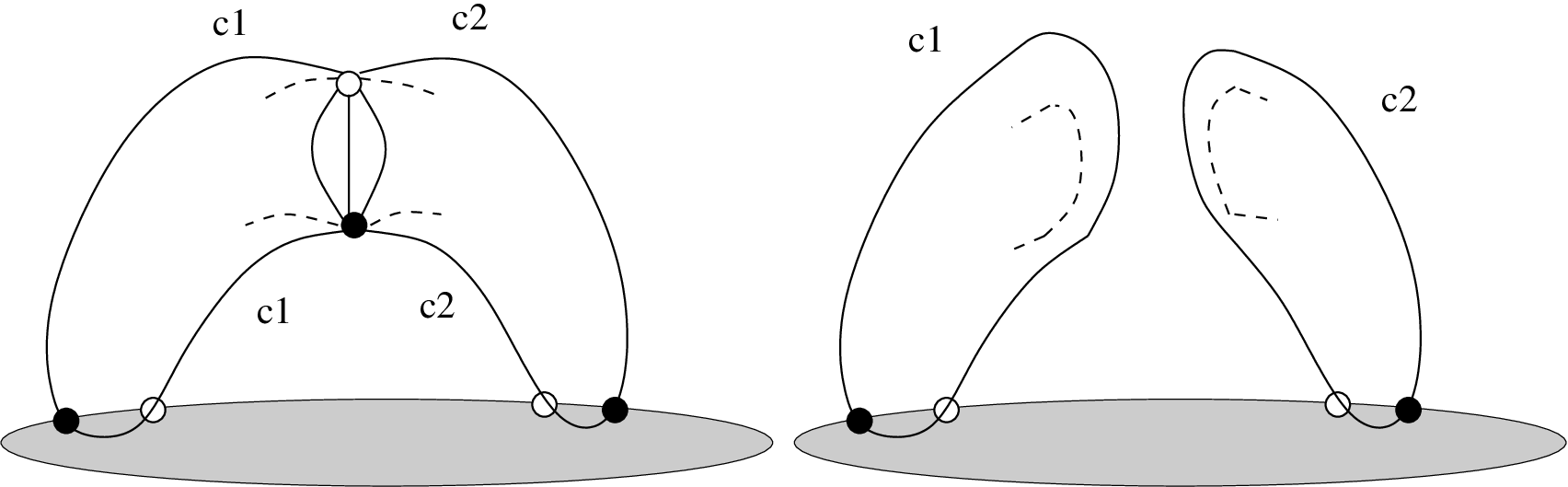}
       \end{center}
       \caption{A face which splits by deleting a dipole.\label{fig:facesplit}}
       \end{figure}
      \item {\bf Type $b$.} (Fig.~\ref{fig:facemerge}) The four edges of color $c_i$ and $c_j$ belong
        to two distinct faces $\{ c_i,c_j\}$ in $G$. Upon removal of the
        $(D-q)$-dipole the two faces $\{ c_i,c_j \}$ are merged into a unique
        face:
            \[
               F^{c_ic_j}(G) = F^{c_ic_j}(G_1)+F^{c_ic_j}(G_2)+\dots + F^{c_ic_j}(G_C) + 1 \; .
           \]
       \begin{figure}[ht]
       \begin{center}
        \psfrag{c1}{$c_1$}
        \psfrag{c2}{$c_2$}
            \includegraphics[width=7cm]{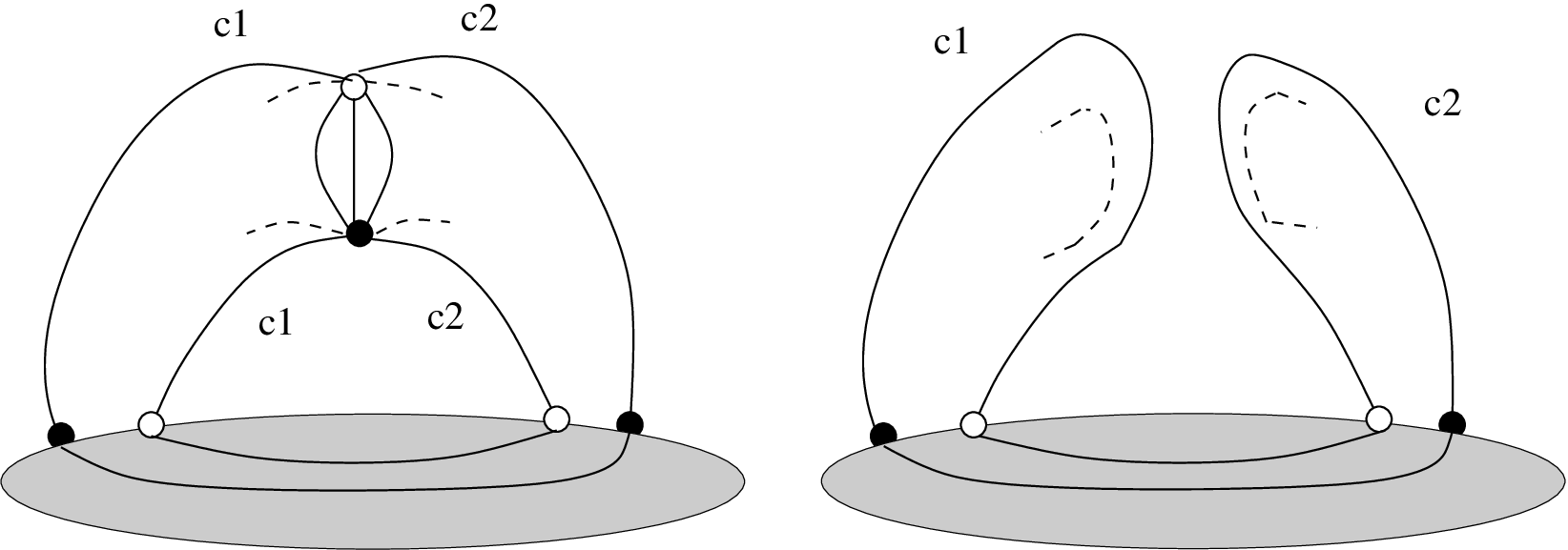}
       \end{center}
       \caption{Two faces which merge by deleting a dipole.\label{fig:facemerge}}
       \end{figure}
     \end{itemize}
\end{itemize}

A third possibility, namely that the two pairs of edges of color $\{c_i,c_j\}$ belong to the same face before the removal and the face 
does not split, does not exist because the graph $G$ is bipartite.  

We are now in position to describe the global effect of the removal of a dipole on the degree.

\paragraph{\it Case I. Completely separating $(D-q)$-dipoles.}
We first consider the case in which the removal of $(D-q)$-dipoles
splits the graph into $q+1$ connected components each containing
exactly one new edge ($C=q+1$ with the notation above): we refer to
such a dipole as \emph{completely separating}.  We illustrated this case in
Fig.~\ref{fig:separating}.

\begin{figure}[htb]
  \begin{center}
  \psfrag{c1}{$c_1$}
    \psfrag{c2}{$c_2$}
      \psfrag{c3}{$c_3$}
    \includegraphics[width=6cm]{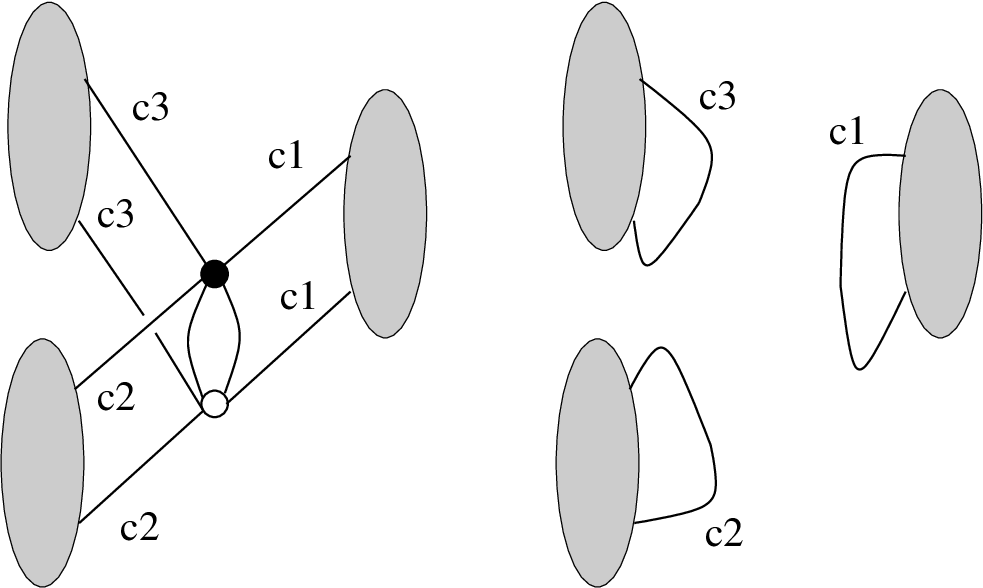}
    \caption{The decomposition at a completely separating $(D-2)$-dipole.} 
    \label{fig:separating}
  \end{center}
\end{figure}

In this case all the faces of colors $\{c_i,c_j\}$ with $\{c_i,c_j\} \subset
\{c_0,c_1,\dots c_q \} $ are of Type $a$, hence 
\begin{align*}F(G) & = F(G_1)
+ F(G_2) + \dots + F(G_{q+1}) + \binom{D-q}{2} - \binom{q+1}{2} \crcr 
&= F(G_1) + F(G_2) + \dots + F(G_{q+1}) +
\frac12{D (D-2q-1) } \; .  
\end{align*}
From Eq.~\eqref{eq:defdegree} we get: 
\begin{align*}
& \delta(G)   = \frac12D(D-1)k(G) + D - F(G) \\
            & \qquad = \frac12D(D-1) \Bigl( k(G_1) + k(G_2) + \dots + k(G_{q+1}) + 1 \Bigr) +D \\
            & \qquad \quad - \Big{(} F(G_1) + F(G_2) + \dots + F(G_{q+1}) + \frac12{D (D-2q-1) } \Big{)} \\
            & \qquad =\delta(G_1) +  \delta(G_2) + \dots  +  \delta(G_{q+1}) \; .
\end{align*}
 
 In conclusion the degree is distributed between the connected components created by the removal of a completely
 separating dipole.

\paragraph{\it Case II. Non completely separating $(D-q)$-dipoles.} 
We now consider the remaining cases ($1\le C\le q$): we refer to such
a dipole as \emph{non completely separating}.

All the faces $\{ c_i,c_j\} $ with $ c_i \in \{c_0,\dots c_{t_1-1} \} $ and $ c_j\notin
\{c_0,\dots c_{t_1-1} \} $ are of Type $a$. On the contrary the
faces $\{c_i,c_j\}$ with $\{c_i,c_j \} \subset \{c_0,\dots c_{t_1-1} \} $ can be
either of Type $a$ or of Type $b$.   
 
Let us denote $b_1$ the number of faces of Type $b$, $0\le b_1 \le
\binom{t_1}{2}$ in $G_1$, $b_2$ the number of faces of Type $b$
in $G_2$ and so on. We have:
\begin{align*} F( G)  = & F(G_1) + F(G_2) + \dots + F(G_{C}) + \binom{D-q}{2} - \binom{q+1}{2} \crcr
& \qquad + 2b_1 + 2b_2 + \dots +2b_C \crcr
= & F(G_1) + F(G_2) + \dots + F(G_{C}) + \frac12 D  (D-2q-1)\crcr
& \qquad + 2b_1 + 2b_2 + \dots +2b_C \;.
\end{align*}
Using again Eq.~\eqref{eq:defdegree} we get:
\begin{align*}
\delta(G)  =& \frac12 D(D-1)k(G) + D - F(G) \crcr
            =& \frac12D(D-1) \Bigl( k(G_1) + k(G_2) + \dots + k(G_{C}) + 1 \Bigr) +D \crcr
            & - \Big{(} F(G_1) + F(G_2) + \dots + F(G_{C})  \crcr
            & \qquad \qquad + \frac12{D (D-2q-1) }{} + 2b_1 + 2b_2 + \dots +2b_C  \Big{)} \crcr
            =& \delta(G_1) + \delta(G_2) + \dots  + \delta(G_{C}) \crcr
            & +D(q+1-C) - 2b_1 -2b_2-\dots -2b_C \; .
\end{align*}

In other terms the variation of the degree through the removal depends
on the structure of the incident faces. Observe that, as $b_i\le \binom{t_i}{2}$:
\begin{align}\label{eq:variationbydeletion}
& D(q+1-C) - 2b_1 -2b_2-\dots -2b_C \ge D(q+1-C) - t_1(t_1-1) -\dots t_C(t_C-1) \crcr
& = (D-t_1)(t_1-1) + \dots (D-t_C)(t_C-1) \;,
\end{align}
as $t_1+\dots +t_C =q+1$. For non completely separating deletions we have $t_i \ge 1$ and at least one of them (say $t_1$) 
is in fact at least $2$, hence 
\[
 (D-t_1)(t_1-1) + \dots (D-t_C)(t_C-1)  \ge D-t_1 \ge D- (q+1)\ge 1 
\]

We now make the various
possibilities more explicit in the cases of $(D-1)$- and
$(D-2)$-dipoles:
 \begin{description}
  \item [\it Non separating $(D-1)$-dipoles.] We have $q=1$, $C = 1$ and $t_1=2$. Depending on the value of $b_1$ we thus distinguish two
   cases:
   \begin{itemize}
    \item Case II.A: the face $(c_0,c_1)$ is of Type $a$, hence $b_1=0$ and:
     \[ \delta(G) = \delta(G_1)  + D \; .\]
    \item Case II.B: the face $(c_0,c_1)$ is of Type $b$, $b_1=1$ and:
    \[ \delta(G) = \delta(G_1)  + D -2 \; .\]
   \end{itemize}
  \item[\it Non completely separating $(D-2)$-dipoles.]  We have $q=2$ and there are two possible values for $C$, and a
   total of $6$ possible cases:
   \begin{itemize}
    \item  \emph{non separating $(D-2)$-dipole deletion}: $C=1$, $t_1 =3$, hence:
              \begin{itemize}
               \item $b_1 = 0$, $ \delta(G) = \delta(G_1)  + 2D $.
               \item $b_1 = 1$,  $ \delta(G) = \delta(G_1)  + 2D -2 $.  
               \item $b_1 = 2$, $ \delta(G) = \delta(G_1)  + 2D -4 $.  
               \item $b_1 = 3$,  $ \delta(G) = \delta(G_1)  + 2D -6 $. 
              \end{itemize}
    \item \emph{partially separating $(D-2)$-dipole deletion}: $C=2$, $t_1 = 1$ (hence $b_1 =0$), $t_2 = 2$, hence:
              \begin{itemize}
               \item $b_2 =0$, $ \delta(G) = \delta(G_1) + \delta(G_2)  + D $.
               \item $b_2 =1$,  $ \delta(G) = \delta(G_1) + \delta(G_2)  + D -2 $.
              \end{itemize}
   \end{itemize}   
\end{description}
   
   In conclusion, the degree decreases by at least $D-2$ by removal of a non separating $(D-1)$-dipole, and for $D\ge 4$ the degree decreases by at 
   least $D-2$ by removal of a non separating or partially separating $(D-2)$-dipole.
   
\subsection{Chain-vertex removal}

The removal of a chain-vertex consists in deleting this vertex and the
incident half-edges and creating two new edges by joining 
the half-edges that arise from the same extremity of the
chain-vertex.  The removal of a chain-vertex in a scheme $\tilde G$ can
equivalently be performed in the following way:
\begin{itemize}
\item replace the chain-vertex by its minimal length chain
  representation: this yields a scheme $\tilde G'$ with same degree.
\item remove one of the $(D-1)$-dipole of the inserted chain:
  one of the three cases above applies.
\item eliminate the melons that might have been created: these
  operations do not affect the degree.
\end{itemize}
This last procedure, although slightly more complex \emph{a priori},
allows to built on the case analysis already done for $(D-1)$-dipole
removal. 
 
\paragraph{\it Case I. Separating chain-vertex.}
The removal of a chain-vertex separates $\tilde G$ into two
components $\tilde G_1$ and $ \tilde G_2$ if and only if the deletion of any
$(D-1)$-dipole of the equivalent chain separates the graph $\tilde G'$ into
two components $\tilde G'_1$ and $\tilde G'_2$. In such a case,
\[
\delta(\tilde G)=\delta( \tilde G')=\delta( \tilde G'_1)+\delta( \tilde G'_2)=\delta(\tilde G_1)+\delta( \tilde G_2).
\]
Observe that in this case, the chain-vertex can represent
indifferently an unbroken or a broken chain.

\paragraph{\it Case II. Non-separating chain-vertex, broken chain.}

Let us call $(c_1,c')$ the external colors of the first dipole in the chain.
This case is similar to a Case II.A removal of a $(D-1)$-dipole: the removal
of the chain-vertex does not separate $\tilde G$ and in the
resulting scheme $\tilde G_1$ the two new edges belong to two different
$(c_1,c')$-cycles.  Then the removal of the chain-vertex is equivalent
to a Case II.A removal of $(D-1)$-dipole in the graph $\tilde G'$, followed by
some melon deletions:
\[ \delta(\tilde G) =\delta(\tilde G')  = \delta( \tilde G_1)  + D \; .\]

\paragraph{\it Case III. Non-separating chain-vertex, unbroken chain.} 
Let us call $(c_1,c_2)$ the external colors of the chain-vertex if they are different, 
or we  call the secondary color of the chain vertex $c_2$, if the external colors are $(c_1,c_1)$. We have two sub cases:
\begin{itemize}
\item {\it Case III.a: two resulting faces.}
This case is similar to a Case II.A removal of a $(D-1)$-dipole: the removal
of the chain-vertex does not separate $\tilde G$ and in the
resulting scheme $\tilde G_1$ the two new edges belong to two different
$(c_1,c_2)$-cycles.  Then the removal of the chain-vertex is equivalent
to a Case II.A removal of $(D-1)$-dipole in the graph $\tilde G'$, followed by
some melon deletions:
\[ \delta(\tilde G) =\delta(\tilde G')  = \delta( \tilde G_1)  + D \; .\]
 \item {\it Case III.b: single resulting face.}
 This case is similar to a Case II.B removal of a $(D-1)$-dipole: the removal
of the chain-vertex does not separate $\tilde G$ but in the
resulting scheme $ \tilde G_1$ the two new edges belong to a same $(c_1,c_2)$-cycle.
The removal of the chain-vertex is equivalent to a Case II.B removal of $(D-1)$-dipole
in the graph $ \tilde G'$, followed by some melon deletions:
\[\delta( \tilde G) =\delta( \tilde G')  = \delta( \tilde G_1)  + D -2 \; .\]

\end{itemize}

\subsection{Iterative deletion of chain-vertices, $(D-1)$-dipoles and $(D-2)$-dipoles}\label{sec:iterate}

We now present an algorithm which allows us to eliminate one by one chain vertices and
$(D-1)$-dipoles (and, for $D\ge 4$, $(D-2)$-dipoles) in a reduced scheme. This will allow us to show that 
the total number of chain vertices, $(D-1)$-dipoles and $(D-2)$-dipoles in a reduced scheme
is bounded linearly in terms of the degree. This algorithm is not unique: 
we present here an adaptation of a similar one introduced in \cite{Fusy:2014rba}. For $D=3$, the algorithm goes through ignoring the $(D-2)$-dipoles.

The deletions of chain vertices and $(D-1)$-dipoles can either be
completely separating or not separating. The deletions of $(D-2)$-dipoles can be
completely separating, partially separating or non separating. Under
any completely separating deletion the degree is distributed among the
resulting connected components, while under the non completely
separating deletions the degree decreases by at least $D-2$.

\subsubsection{Non completely separating deletions} 

We will first perform a maximal number of \emph{non completely separating deletions}, that is 
non separating deletions of chain vertices and $(D-1)$-dipoles and 
non separating or only partially separating deletions of $(D-2)$-dipoles. 
Some examples of the algorithm we present below are depicted in Fig.~\ref{fig:nonsepdeletions}
and Fig.~\ref{fig:nonsepdeletions1}.  

Let $\tilde G$ be a reduced scheme of degree $\delta(\tilde G)$, and let us denote $\mathfrak{D}(\tilde G)$ the total number of 
chain vertices, $(D-1)$-dipoles and $(D-2)$-dipoles in $\tilde G$. 
As long we find a non separating chain-vertex, a non separating $(D-1)$-dipole \emph{formed by parallel unmarked edges} 
or a non completely separating $(D-2)$-dipole \emph{formed by parallel unmarked edges} we delete it. 
We mark the new edges with a \emph{blue mark, keeping track of the multiplicity}. 
That is, if the two half-edges connected at a step come from edges having $m_{1;{\rm blue}}$ and $m_{2;{\rm blue}}$ blue marks respectively, 
the new edge will have $m_{1;{\rm blue}}+m_{2;{\rm blue}}+1$ blue marks.
Observe that the deletion of a chain-vertex having a left and a right half-edge matched into an edge creates only one edge with two blue marks. 
As we delete partially separating $(D-2)$-dipoles the scheme can become disconnected and one can 
obtain ring components with blue marks, see Fig.~\ref{fig:nonsepdeletions} and Fig.~\ref{fig:nonsepdeletions1}.

Our aim is to delete at each step only dipoles which were present in the  original scheme, hence we only delete dipoles with unmarked parallel edges.
If the deletion creates new $(D-1)$-dipoles and $(D-2)$-dipoles having blue marks on some of their parallel edges, these edges 
\emph{do not count} when identifying dipoles. However, the ring components consisting in one edge with blue marks \emph{count} as connected components when 
deciding whether a deletion is completely separating or not.

Observe that $\mathfrak{D}(\tilde G)$ always goes down by one under a chain-vertex deletion, but, for $3\le D \le 5$ it can go down by as much as 
$3$ for the dipole deletions (see Fig.~\ref{fig:nonsepdeletions}): 
\begin{itemize}
 \item in $D=3$ the $2$-dipoles (i.e. $(D-1)$-dipoles) are not
   necessarily vertex disjoint. Each vertex of a $2$-dipole can belong
   to another $2$-dipole, in which case three dipoles are erased by a
   deletion.
 \item in $D=4$ the $3$-dipoles and $2$-dipoles (i.e. $(D-1)$-dipoles
   and $(D-2)$-dipoles) and couples of $2$-dipoles
   (i.e. $(D-2)$-dipoles) are not necessarily vertex disjoint and
   again up to three dipoles can be erased at one step.
 \item in $D=5$ the $3$-dipoles (i.e. $(D-2)$-dipoles) are not
   necessarily vertex disjoint and again up to three dipoles can be
   erased at one step.
\end{itemize}

Observe that if two dipoles share a vertex, then neither of the two can be completely separating.

\begin{figure}[ht] 
\begin{center}
\includegraphics[width=6cm]{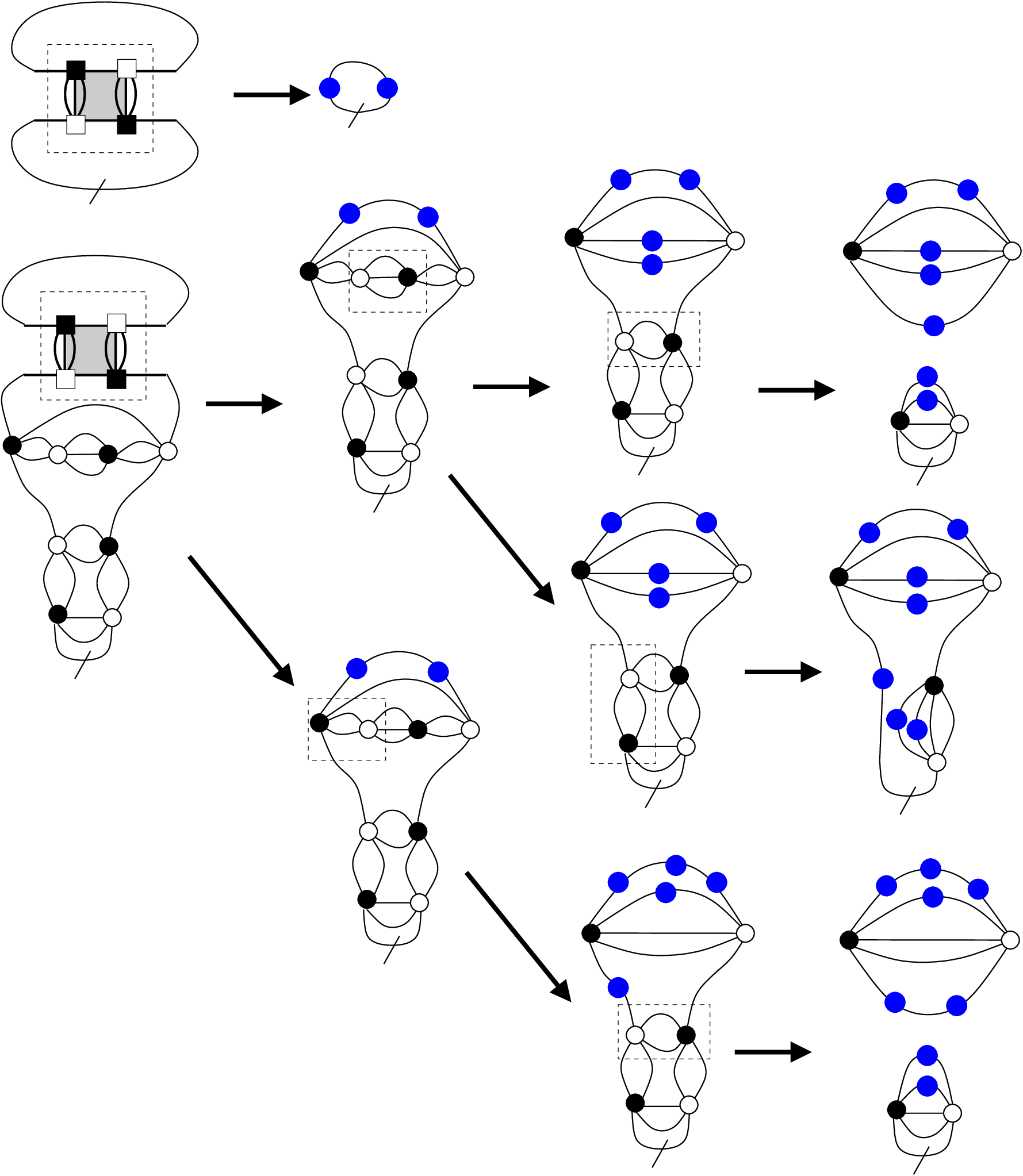}
\end{center}
\caption{Iterated non completely separating deletions, first two
  examples: in the first case only one deletion is necessary; in the
  second example 3 possible maximal sequences of deletions are shown.\label{fig:nonsepdeletions}}
\end{figure}

\begin{figure}[ht]
\begin{center}
\includegraphics[width=6cm]{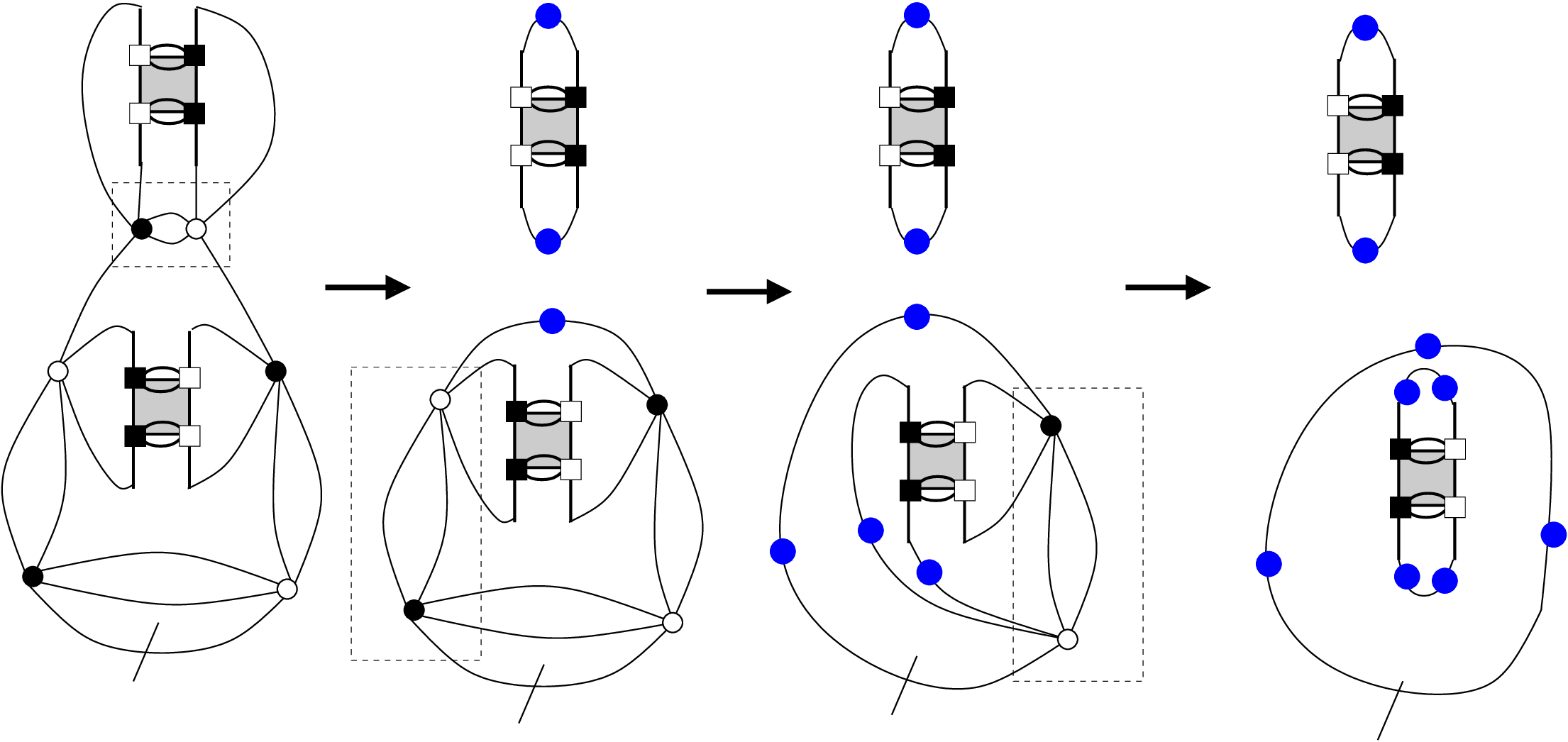}
\end{center}
\caption{Iterated non completely separating deletions, third example.\label{fig:nonsepdeletions1}}
\end{figure}

We iterate the non completely separating deletions maximally and obtain a scheme $\tilde G'$. Unlike $\tilde G$, $\tilde G'$ in general is neither connected nor reduced
(as depicted for instance in Fig.~\ref{fig:nonsepdeletions} and Fig.~\ref{fig:nonsepdeletions1}). Furthermore, $\tilde G'$ is not unique, and depends of the sequence of deletions performed.

The degree of $\tilde G'$ is the sum of the degrees of its connected components.
Let us denote the maximal number of non completely separating deletions one can perform starting from a reduced scheme by $q_{\rm{n.c.s.}}$.
We have the following inequalities:
\begin{itemize}
 \item the degree goes down by at least one for each of these deletions, hence:
\[
 q_{\rm{n.c.s.}} \le \delta ( \tilde G') + q_{\rm{n.c.s.}} \le \delta(\tilde G) \;,
\]
\item $\mathfrak{D}(\tilde G)$ goes down by at most three at each step, hence:
\[
 \mathfrak{D}(\tilde G) \le \mathfrak{D}(\tilde G') + 3 q_{\rm{n.c.s.}}  \le  \mathfrak{D}(\tilde G') + 3 \delta(\tilde G) \; ,
\]
\item every deletion creates at most three new blue marks, hence the 
total number of marks $\tilde G'$, $m_{\rm{blue}}(\tilde G')$, is bounded by:
\[
 m_{\rm blue}(\tilde G') \le 3 q_{\rm{n.c.s.}} \le 3  \delta(\tilde G) \;.
\]
\end{itemize}

In the scheme $\tilde G'$, all the chain-vertices, $(D-1)$ and $(D-2)$-dipoles are completely separating.
It follows that, for any $D$, all the remaining $(D-1)$ and $(D-2)$-dipoles \emph{are vertex disjoint}.

In order to conclude that $ \mathfrak{D}(\tilde G) $ is bounded linearly in terms of $ \delta(\tilde G)$,
it is enough to bound $\mathfrak{D}(\tilde G') $ in terms of $ \delta( \tilde G')$ and $ m_{\rm{blue}}(\tilde G')$. This is slightly subtle because the remaining deletions are all 
separating, hence they conserve the degree. 

All the remaining $(D-1)$- and $(D-2)$-dipoles are vertex disjoint, hence we can delete them together with the remaining chain-vertices in any order. 
We mark the new edges with a black mark, \emph{keeping track of the multiplicity} (and of course of the multiplicity of the blue marks). 
That is, if the two half-edges connected at a step come from edges having $m_{1;{\rm black}}$ and $m_{1;{\rm blue}} $ respectively
$m_{2;{\rm black}}$ and $m_{2;{\rm blue}} $ black and blue marks, the new edge will have 
$m_{1;{\rm black}}+ m_{2;{\rm black}} + 1 $ black marks and $ m_{1;{\rm blue}} + m_{2;{\rm blue}}$ blue marks.

For each $(D-2)$-dipole deleted (having external colors, say, $c_1,c_2$ and $c_3$) 
we add a copy of the fundamental melon (consisting in two vertices connected by $D+1$ edges) and add a black mark on its edges $c_1,c_2$ and $c_3$. 
Of course, edges with either type of marks (black or blue) do not count when identifying $(D-2)$ and $(D-1)$-dipoles.

As before, these deletions can  create ring components with marks.
We represent in Fig.~\ref{fig:nonsepdeletions2} the deletion of a maximal set of non separating dipoles and chain vertices, followed
by the deletion of all the separating $(D-1)$ and $(D-2)$-dipoles and chain vertices in a reduced scheme.
\begin{figure}[ht]
\begin{center}
\includegraphics[width=10cm]{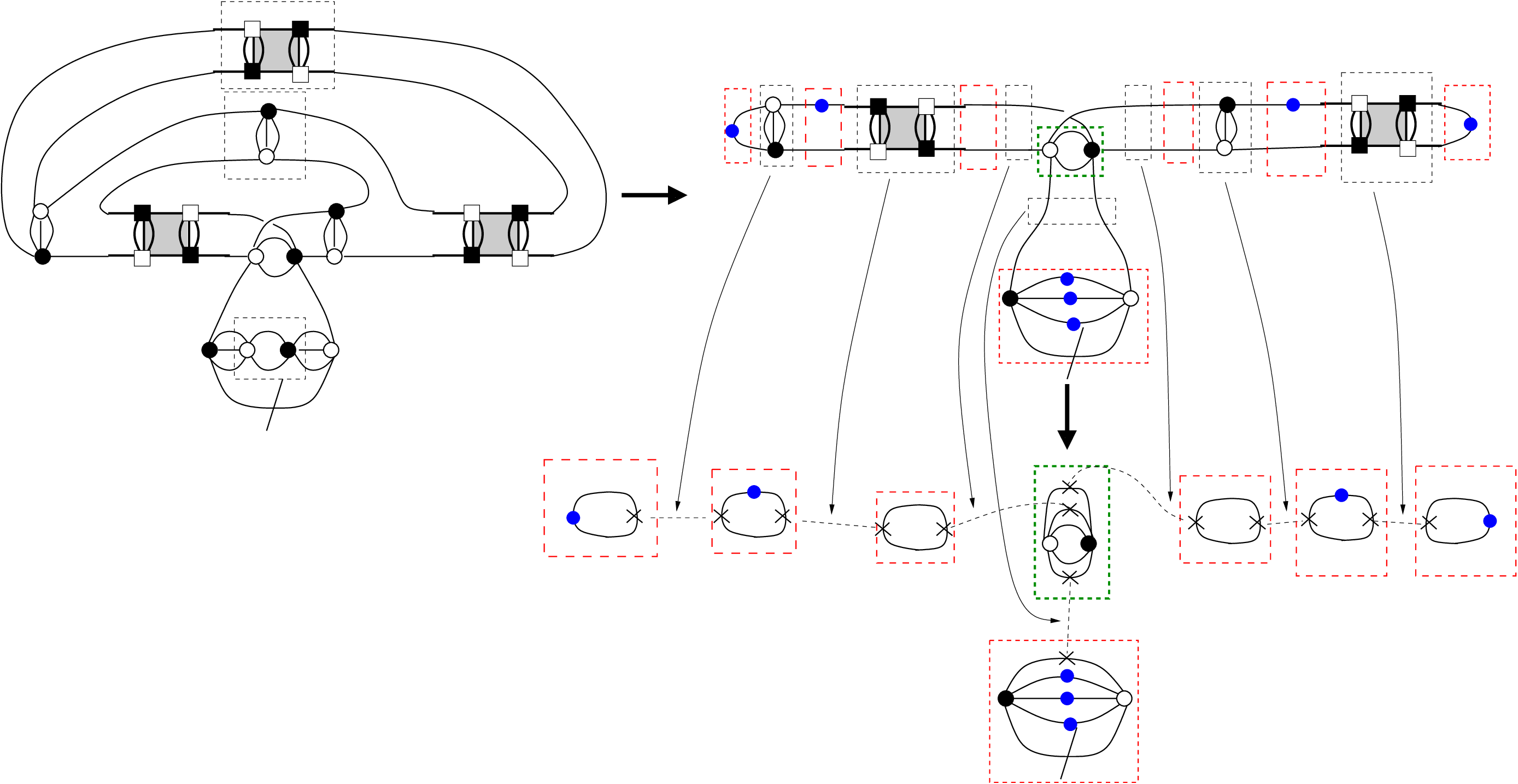}
\end{center}
\caption{Maximal deletion of non separating chain vertices and dipoles, followed by the deletion of the separating chain vertices and dipoles in a reduced scheme.\label{fig:nonsepdeletions2}}
\end{figure}

Let us denote the scheme obtained in this way by $\tilde G''$. By construction $\tilde G''$ has:
\begin{itemize}
 \item blue and black marks. The number of blue marks of $\tilde G''$ is equal to the number of blue marks 
 of $\tilde G'$, $m_{\rm blue}(\tilde G'') = m_{\rm blue}(\tilde G')$. We denote $m_{\rm black}(\tilde G'') $ the number of black marks in $\tilde G''$.
 \item the same degree as $\tilde G'$.
 \item ring components with marks (either blue or black).
 \item no $(D-1)$-dipole made of unmarked parallel edges.
 \item no chain-vertex.
 \item all the $(D-2)$-dipoles made of unmarked parallel edges are copies of the fundamental melon with three marked edges.
 \item at least a mark (blue or black) in every connected component (or no mark and only one connected component if no dipole or chain-vertex is ever deleted).
\end{itemize}

The connected components of $\tilde G''$ can be seen as the vertices of an abstract graph $\mathfrak{F}$ (represented in Fig.~\ref{fig:nonsepdeletions2} at the bottom) 
whose edges correspond either to the chain-vertices and to the $(D-1)$-dipoles in $\tilde G'$, or to pairs of half-edges of the same color of the $(D-2)$-dipoles in $\tilde G'$.
A subtle point is the following (see Fig.~\ref{fig:nonsepdeletions2}): a pair of half-edges of the same color of 
a $(D-2)$-dipole in $\tilde G'$ can lead:
\begin{itemize}
 \item either \emph{to an edge and a vertex} in $\mathfrak{F}$ if the pair is matched to a pair of half-edges of a chain-vertex, $(D-1)$- or $(D-2)$-dipole, (this is the case of the left and right pairs around the middle dipole in Fig.~\ref{fig:nonsepdeletions2})
 \item or \emph{to just an edge} in $\mathfrak{F}$ if it is not (this is the case of the bottom pair around the middle dipole in Fig.~\ref{fig:nonsepdeletions2}).
\end{itemize}

As all the chain vertices and $(D-1)$- and $(D-2)$-dipoles of $\tilde G'$ are completely separating, $\mathfrak{F}$ is a \emph{forest} 
: every tree in $\mathfrak{F}$ corresponds to one of the connected components of $\tilde G'$. 
We denote $\mathfrak{D}^{ (D-1)}$ the number of chain vertices and $(D-1)$-dipoles of $\tilde G'$ and 
$ \mathfrak{D}^{   (D-2)} $ the number of $(D-2)$-dipoles of $\tilde G'$, hence:
\[
 \mathfrak{D}(\tilde G')  = \mathfrak{D}^{ (D-1)} + \mathfrak{D}^{   (D-2)} \; , \qquad  \mathfrak{D}^{  (D-1)} + 3 \mathfrak{D}^{   (D-2)}   =  E(\mathfrak{F})\; ,
\]
where $E(\mathfrak{F})$ denotes the number of edges of the abstract forest $\mathfrak{F}$,

The following lemma characterizes the components of degree zero having only black marks in $\tilde G''$.

\begin{lemma}\label{lem:zerodegcomp}
 The components of degree zero having only black marks of $\tilde G''$ and not containing the root:
 \begin{itemize}
  \item either are rings with only two marks such that at least one of the marks comes from a $(D-2)$-dipole deletion,
  \item or have at least three marks.
 \end{itemize}
\end{lemma}

\begin{proof} The crucial observation is that all the unmarked edges in $\tilde G''$ are in fact edges which were present in $\tilde G$, and 
$\tilde G$ is a reduced scheme. 

The components of degree zero of $\tilde G''$ are either ring components or melonic graphs with at least two vertices.

Ring components with one black mark can be created by separating deletions only if one deletes $D-1$ edges in a melon $\mathbb{O}^c$ in $\tilde G$,
one deletes a chain-vertex whose left (or right) external half-edges are matched together into an edge in $\tilde G$  or one deletes $D-2$
edges in a $(D-1)$-dipole of $G$. The first two cases are impossible as $\tilde G$ is reduced, while the third would mean that a dipole of $\tilde G$
has been mislabeled.

Ring components with two marks can be created by the deletion of only $(D-1)$-dipoles and chain vertices only if, in $\tilde G$, the two left (or right) half-edges of 
a $(D-1)$-dipole or a chain-vertex are both matched to the two left (or right) half-edges of another $(D-1)$-dipole or a chain-vertex.
This is again impossible, as $\tilde G$ is reduced.

Ring components with three marks can be created, as depicted in Fig.~\ref{fig:ring} below.
\begin{figure}[ht]
\begin{center}
\psfrag{c}{$c$}
\includegraphics[width=7cm]{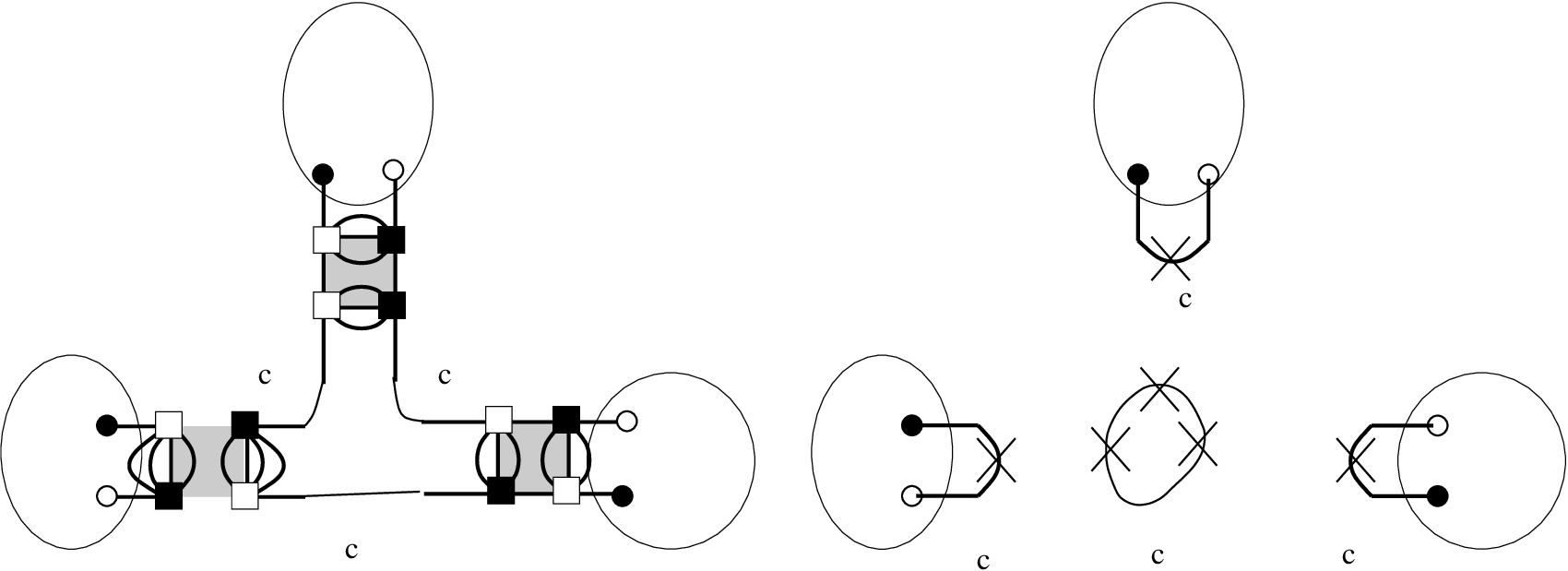}
\end{center}
\caption{Ring components with three marks. \label{fig:ring}}
\end{figure}

A nontrivial melonic graph with fewer than three marks contains at least a $(D-1)$-dipole made of unmarked parallel edges, which is impossible
by the construction of $\tilde G''$.

\qed
\end{proof}

The remainder of the proof relies on the observation that $\mathfrak{F}$ is a finite forest
as it has a finite number of vertices of valence zero or one (as only the root component, the strictly positive 
degree components, and the zero degree components with blue marks of $\tilde G''$ can be 
zero valent or univalent in $\mathfrak{F}$) and a finite number of vertices of valence two (as this number is  bounded linearly in the number 
of $(D-2)$-dipoles in $\tilde G'$).
We denote:
\begin{itemize}
 \item $n_+\le \delta(\tilde G')$ the number of connected components of $\tilde G''$ having strictly positive degree.
 \item $n_{0,{\rm blue}}\le m_{\rm blue}(\tilde G')$ the number of connected components of $\tilde G''$ having degree zero and having at least a blue mark.
 \item $n_{0,{\rm black}}^{(2)} \le  3 \mathfrak{D}^{   (D-2) } $ the number of connected components of degree zero of $\tilde G''$ having exactly two black marks
 and no blue mark. They are necessarily ring components and at leas one of the marks comes from a $(D-2)$-dipole deletion.
 \item $n_{0,{\rm black}}^{(3)}$ the  number of connected components of degree zero of $\tilde G''$ having only black marks and having at least three marks.
\end{itemize}
 We have:
\begin{align*}
 1+ n_+ + n_{0,{\rm blue}} + n_{0,{\rm black}}^{(2)} + n_{0,{\rm black}}^{(3)} & \ge  E(\mathfrak{F}) +1 \; , \crcr
  2 n_{0,{\rm black}}^{(2)} + 3 n_{0,{\rm black}}^{(3)} & \le  m_{\rm black}(\tilde G'') = 2E(\mathfrak{F}) \; , 
\end{align*}
that is $n_{0,{\rm black}}^{(3)}   \le 2 n_+ + 2 n_{0,{\rm blue}} $,
which further leads to:
\begin{align*}
 \mathfrak{D}^{  (D-1)} + 3 \mathfrak{D}^{   (D-2)}  &= E(\mathfrak{F})  \le 3 n_+ + 3 n_{0,{\rm blue}} + n_{0,{\rm black}}^{(2)}  \Rightarrow\crcr
  \mathfrak{D}^{  (D-1)} + 3 \mathfrak{D}^{   (D-2)}  & \le   3 n_+ + 3 n_{0,{\rm blue}}    + 3 \mathfrak{D}^{   (D-2)}    \Rightarrow 
 \mathfrak{D}^{  (D-1)} \le 3 n_+ +   3  n_{0,{\rm blue}}  \; .
\end{align*}
On the other hand, as only the root component, the positive degree components or the zero degree components with blue marks can be univalent in $\mathfrak{F}$, 
$\mathfrak{D}^{(D-2)} $ is bounded by the maximal number of trivalent vertices in a forest with exactly 
$ 1 + n_+ + n_{0,{\rm blue}}$ univalent vertices, 
$ \mathfrak{D}^{(D-2)} \le n_+ + n_{0,{\rm blue}} - 1 \;, $
\ie the number of such vertices in a binary tree. Thus:
\[
 \mathfrak{D}(\tilde G')  \le 4 n_+ +  4 n_{0,{\rm blue}}  - 1  \le 4  \delta(\tilde G' ) + 4 m_{\rm blue}(\tilde G') \;,
\]
which finally leads us to:
\[
 \mathfrak{D}(\tilde G) \le  3 \delta(\tilde G) +  \mathfrak{D}(\tilde G')  \le  3 \delta(\tilde G) + 4  \delta(\tilde G' ) + 4 m_{\rm blue}(\tilde G') \le 
 19 \delta( \tilde G) \;,
\]
as $\delta(\tilde G') \le \delta(\tilde G)$ and $ m_{\rm blue}(\tilde G')  \le 3  \delta(\tilde G) $.

This concludes the proof of Proposition~\ref{prop:schemebound}.

\newpage

\section{Proof of Proposition~\ref{prop:finitdipole}}\label{sec:prf2}
 
\subsubsection{The case $D=3$.}

For $D=3$, we are interested in melon free graphs with fixed number of 2-dipoles,
or equivalently, with a fixed number of faces of degree 2. In view of
Eq.~\eqref{eq:delta'}, the number of faces of degree 6 or more in
such a graph satisfies:
\[
\sum_{p\geq3}F_p( \hat G)\leq  \sum_{p\geq2}2(p-2)F_p(\hat G) \leq (D+1)\delta (\hat G)+ 2 F_1(\hat G) \; ,
\]
i.e. this number is finite. Moreover the number of vertices
incident to a face of degree 6 or more is finite:
\[
\sum_{s\geq3}2pF_p (\hat G)=\sum_{p\geq3} 2 [ p-2+2]F_p (\hat G) \leq 5(D+1)\delta (\hat G) +10 F_1 (\hat G) .
\]
The vertices 
belonging to a face of degree not equal to $4$ either belong to a face of degree 
two or to a face of degree larger than or equal to $6$. According to 
our previous remark, the number of such vertices is at most:
\[5(D+1)\delta (\hat G) +12F_1 (\hat G) \; . \]

However there could \emph{a priori} be an arbitrary number of faces of
degree $4$ (that is arbitrarily many vertices incident only to faces
of degree $4$), since the coefficient of $F_2$ in
Eq.~\eqref{eq:delta'} is zero for $D=3$. Let us rule this possibility
out.

Let us count the maximal number of vertices that can be at distance at
most $3$ of a vertex on a face of degree not equal to $4$. 
Since all vertices have degree $4$, the number of
vertices at distance at most 3 of any vertex is $4+4\cdot 3+4\cdot 3^2=52$.
Therefore if a colored graph $G$ has more than
$[5(D+1)\delta (\hat G) +12F_1 (\hat G) ]52$ vertices, then it contains a vertex $v$ such that
all vertices at distance less than 3 of $v$ belong \emph{only} to faces of
length $4$.

Now take an arbitrary jacket of $\hat G$: for instance the one corresponding to the cycle
$(0,1,2,3)$. Then the faces of color $(0,1)$, $(1,2)$, $(2,3)$ and
$(3,0)$ of $\hat G$ are faces of the resulting map, which is thus locally a
regular square grid around $v$ (up to distance $3$ at least). Then the fact
that faces of color $(1,3)$ and $(0,2)$ also have length $4$ implies that
this map is in fact a four by four toroidal grid. In particular $\hat G$ has only finitely many 
vertices. 

We conclude that there are only finitely many colored graphs with
fixed number of 2-dipoles, hence Proposition~\ref{prop:finitdipole} is proved for $D=3$.

\subsubsection{The case $D\ge 4$.}

The proof of Proposition~\ref{prop:finitdipole} is similar for all $D\ge 4$. 
Consider a melon free $(D+1)$-colored graph $\hat G$ of degree $\delta(\hat G)$ with $2k(\hat G)$ vertices,
having $t_1(\hat G)$ $(D-1)$-dipoles and $t_2(\hat G)$ $(D-2)$-dipoles.  We will show
that the number of such graphs is finite. The bound we establish
below is not tight and can be improved
with minimal effort, but it is sufficient for our purpose.

Let us count faces of degree 2 according to whether they belong to a
$(D-1)$-dipole, a $(D-2)$-dipole or none of these two:
\begin{equation}
  F_1 (\hat G) \le  t_1(\hat G) \binom{D-1}{2} +   t_2(\hat G) \binom{D-2}{2} + \alpha(D) k(\hat G)  \; ,\label{eq:bornesurF1}
\end{equation}
where
\begin{itemize}
 \item $\alpha(4)=0$ as, for $D=4$, all the faces with two vertices
   must belong to a $(D-1)$- or a $(D-2)$-dipole (\emph{i.e.} a 3- or
   a 2-dipole).
 \item $\alpha(5) = 3$ as, for $D=5$, a vertex not belonging to a
   $(D-1)$- or $(D-2)$-dipole can belong to at most three 2-dipoles.
 \item $\alpha(6) = 6$ as, for $D=6$, a vertex not belonging to a
   $(D-1)$- or $(D-2)$-dipole belongs to the largest number of faces
   of degree two when it belongs to two $3$-dipoles \emph{i.e.} to six
   faces of degree two.
 \item $\alpha(D) =\frac{(D-3)(D-4)}{2}+6$, for all $D\ge 7$ as, in
   this case, a vertex not belonging to a $(D-1)$- or $(D-2)$-dipole
   belongs to the largest number of faces of degree two when it
   belongs to a $(D-3)$-dipole and a $4$-dipole.
\end{itemize}
On the one hand, the bound~\eqref{eq:bornesurF1} together with Eq.~\eqref{eq:delta'} gives:
\begin{align*}
   & \sum_{p\ge 2}  [ (D-1)p-D-1 ] F_p (\hat G)  \crcr
   & \qquad \le (D+1) \delta (\hat G) - D(D+1)  \crcr
   & \qquad \qquad + 2 t_1(\hat G) \binom{D-1}{2}  +   2 t_2(\hat G) \binom{D-2}{2}  +  2 \alpha(D) k(\hat G) \; .
\end{align*}
On the other hand, Eq.~\eqref{eq:doublecount1} can be rewritten as $ \frac{D(D+1)}{2} k(\hat G) = F_1(\hat G) + \sum_{p\ge 2} pF_p(\hat G)  $, hence:
\begin{equation}
\label{eq:expr}
 \left[ \frac{D(D+1)}{2} -\alpha(D)  \right] k(\hat G) \le \sum_{p\ge 2} p F_p 
+ t_1(\hat G) \binom{D-1}{2} + t_2(\hat G) \binom{D-2}{2}  \; .
\end{equation}
Eliminating $k(\hat G)$ between these two equations and reordering we get: 
 \begin{align*}
& \sum_{p\ge 2}  \left[ \left( D-1 - \frac{4 \alpha(D)}{D(D+1)-2 \alpha(D) }   \right)p-D-1 \right]F_p(\hat G) \crcr
& \qquad \le  (D+1)\delta -  D(D+1)
+\Bigl( 2 +   \frac{4 \alpha(D)}{D(D+1)-2 \alpha(D) }  \Bigr) t_1(\hat G) \binom{D-1}{2} \\
&\qquad \qquad +  \Bigl(  2 +  \frac{4 \alpha(D)}{D(D+1)-2 \alpha(D) }   \Bigr) t_2(\hat G) \binom{D-2}{2}  \; .
\end{align*}
The coefficient of $F_p$ on the left hand side is 
\begin{itemize}
 \item for $D=4$: $3p-5 $.
 \item for $D=5$: $ \frac{7}{2}p-6$.
 \item for $D=6$: $\frac{21}{5}p-7$.
 \item for $D\ge 7$: $ \frac{3(D-4)(D+1)}{4(D-3)}(p-2) + \frac{(D-6)(D+1)}{2(D-3)}$.
\end{itemize}
In particular this coefficient is strictly positive for $p\ge 2$ so
that we get an upper bound for each $F_p(\hat G)$,
$p\geq2$ depending only on $D$, $\delta(\hat G)$, $t_1(\hat G)$ and $t_2 (\hat G) $, and there is a maximal
value of $p$, depending again only on $D$, $\delta(\hat G)$, $t_1(\hat G)$ and $t_2 (\hat G) $ for which $F_p$
can be non zero.

On the other hand:
\[
  \frac{D(D+1)}{2} -\alpha(D) = \begin{cases}
                                  10\;, \;\; & D=4  \\
                                  12\;, \;\; & D=5  \\
                                  15 \;, \;\; & D=6  \\
                                  4 (D-3)\;, \;\; & D\ge 7  \\
                                \end{cases} \; ,
\]
is always positive, so that from Eq.~\eqref{eq:expr}, we finally get
an upper bound on $k(\hat G)$ depending only on $D$, $\delta(\hat G)$, $t_1(\hat G)$ and $t_2 (\hat G) $.

This completes the proof of Proposition~\ref{prop:finitdipole}.

\newpage

\section{Exact enumeration}\label{sec:enumeration}

\subsection{Melonic graphs and cores}

In view of Theorem~\ref{thm:core} we will need, in order to
enumerate colored graphs,  the generating function of melonic graphs.

\begin{proposition}[See \emph{e.g.} \cite{color}]\label{prop:melonic}
The generating function $T(z)$ of melonic graphs (and open melonic graphs)
with respect to the number of black vertices is
the unique power series solution of the equation:
\[
T(z)=1+zT(z)^{D+1}.
\]
\end{proposition}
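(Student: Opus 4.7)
The proof proceeds by decomposing rooted melonic graphs in two stages. First I would introduce the generating function $P(z)$ of \emph{prime} melonic graphs with root color $0$. By the definition of a prime melonic graph given earlier in Section~\ref{sec:structural}, a prime melonic graph with root $\rho = \{x, y\}$ is encoded by the subset $S \subseteq \{0, 1, \ldots, D\}$ with $0 \in S$ of colors for which $x$ and $y$ are joined by a direct parallel edge, together with, for each color $c \notin S$, a non-empty rooted melonic graph $H_c$ with root color $c$ inserted between $x$ and $y$ at the color-$c$ slot. By color symmetry, the generating function of non-empty rooted melonic graphs with any fixed root color equals $T(z) - 1$, and summing over subsets via the binomial theorem yields
\[
P(z) \;=\; z \sum_{m=0}^{D} \binom{D}{m} (T(z)-1)^{D-m} \;=\; z\, T(z)^{D},
\]
where the factor $z$ counts the black root vertex $x$.

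Next I would decompose a non-empty rooted melonic graph $G$ through the cut-set of its root edge. By the definition of a melonic graph and the structural lemmas of Section~\ref{sec:structural} on 2-edge-cuts, the root cut-set in $G$ is a cyclic arrangement $(e_0 = r, e_1, \ldots, e_\ell)$ of $\ell + 1$ color-$0$ edges whose closed components are prime melonic graphs with root color $0$. Rooting at $e_0$ breaks the cyclic symmetry and turns the cyclic sequence into a linear one, so the count of non-empty rooted melonic graphs with cut-set of size $\ell + 1$ has generating function exactly $P(z)^{\ell + 1}$. Including the constant $1$ from the empty rooted melonic graph (with zero vertices) and summing geometrically,
\[
T(z) \;=\; 1 + \sum_{\ell \geq 0} P(z)^{\ell + 1} \;=\; 1 + \frac{P(z)}{1 - P(z)}.
\]
Substituting $P = z T^{D}$ and clearing denominators gives precisely $T = 1 + z T^{D+1}$. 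Uniqueness of the power series solution is automatic: the equation forces $T(0) = 1$ and then expresses $[z^n]T$ in terms of $[z^m]T$ for $m < n$.

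The main obstacle is the structural check underlying the cut-set decomposition: one must verify that every edge in the cut-set of the root has color $0$ (matching the root) and that each closed component is a prime melonic graph with root color $0$. This follows from the inductive definition of melonic graphs combined with the 2-edge-cut lemmas from Section~\ref{sec:structural}, but requires careful bookkeeping to rule out cut-sets that could a priori mix multiple colors.
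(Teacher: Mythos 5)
Your proposal is correct and follows essentially the same route as the paper: the paper's one-line proof introduces the prime melonic generating function $M(z)=zT(z)^D$ (your $P(z)$) and writes $T=1+\sum_{i\geq1}M^i=1/(1-M)$, which is exactly your two-stage decomposition, merely stated without the binomial expansion or the explicit cut-set/color bookkeeping that you (rightly) flag as the underlying structural verification.
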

\begin{proof}
 Let $M(z)$ be the generating function of prime melonic graphs.
 Then the inductive definition (Definition~\ref{def:melonin}) of melonic graphs and prime
 melonic graphs immediately translate into the equations:
 \[T(z)=1+\sum_{i\geq1}M(z)^i=\frac1{1-M(z)} 
 \; , \qquad 
 M(z)=zT(z)^D \;,
 \]
and we conclude.
\qed
 \end{proof}

\begin{corollary}[\emph{e.g.} \cite{color}]
The generating series $T(z)$ admits the power series expansion
\[
 T(z) = \sum_{k\ge 0}
\frac{1}{(D+1)k+1} \binom{(D+1)k+1}{k} z^k \;.
\]
It has a dominant singularity at $z_0=D^D/(1+D)^{1+D}$ and the
following singular expansion in a slit domain around $z_0$ 
\begin{align} \label{eq:sing}
& T(z) = \frac{1+D}{D}  -\sqrt{ 2 \frac{D+1}{D^3}} \sqrt{ 1 - z/z_0  }+O(1-z/z_0) \; ,
\crcr
&  1-DzT(z)^{D+1} = D \sqrt{ 2 \frac{D+1}{D^3}} \sqrt{ 1 - z/z_0  } +  O(1-z/z_0) \; .
\end{align}
In particular $T(z_0)=(1+D)/D$ and $z_0T(z_0)^{D+1}=1/D$.
\end{corollary}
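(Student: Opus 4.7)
The plan is to handle the three assertions in turn: the closed-form coefficients, the location of the dominant singularity, and the singular expansion. All three follow from standard analytic combinatorics applied to the functional equation $T = 1 + zT^{D+1}$; I do not expect a genuine obstacle, only bookkeeping of constants.

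First, for the coefficients, the substitution $U = T-1$ puts the equation into the Lagrange form $U = z\phi(U)$ with $\phi(U) = (1+U)^{D+1}$. Lagrange inversion then gives, for $k\ge 1$,
\[
[z^k]T(z) \;=\; [z^k]U(z) \;=\; \frac{1}{k}[U^{k-1}]\phi(U)^k \;=\; \frac{1}{k}\binom{(D+1)k}{k-1},
\]
which rearranges into $\frac{1}{(D+1)k+1}\binom{(D+1)k+1}{k}$, and the $k=0$ case is immediate from $T(0)=1$.

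Next, I would locate the singularity by implicit differentiation: from $T = 1 + zT^{D+1}$ one gets $T'(z) = T^{D+1}/(1 - (D+1)zT^D)$. Since $T$ has nonnegative coefficients, its smallest positive singularity $z_0$ is where the denominator first vanishes, i.e.\ where the system $T_0 = 1 + z_0 T_0^{D+1}$ and $(D+1)z_0 T_0^D = 1$ is simultaneously satisfied. Eliminating $z_0$ between these gives $T_0 = (D+1)/D$, whence $z_0 = D^D/(D+1)^{D+1}$ and $z_0 T_0^{D+1} = 1/D$.

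Finally, for the singular expansion, I would set $F(z,T) = T - 1 - zT^{D+1}$ and expand $F=0$ around $(z_0,T_0)$. At this point $F_T(z_0,T_0) = 0$ while a direct computation gives $F_{TT}(z_0,T_0) = -(D+1)Dz_0T_0^{D-1} = -D^2/(D+1)$ (using $(D+1)z_0T_0^D = 1$) and $F_z(z_0,T_0) = -T_0^{D+1}$. Writing $\tau = T - T_0$ and $h = 1 - z/z_0$, the leading balance of the Taylor expansion,
\[
-z_0 F_z\, h + \tfrac{1}{2}F_{TT}\tau^2 + O(h\tau,\, h^2,\, \tau^3) \;=\; 0,
\]
reduces after substituting $z_0T_0^{D+1} = 1/D$ to $\tau^2 = \frac{2(D+1)}{D^3}\,h$. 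The physical branch is the one with $\tau \to 0^-$ as $z \to z_0^-$ (since $T < T_0$ for $z < z_0$), yielding $T(z) = T_0 - \sqrt{2(D+1)/D^3}\,\sqrt{1-z/z_0} + O(1-z/z_0)$. That $F_{TT}(z_0,T_0)\neq 0$ ensures, by the standard smooth implicit function / Newton polygon argument for algebraic singularities, that this expansion is a convergent Puiseux series in $\sqrt{1 - z/z_0}$ on a slit neighborhood of $z_0$.
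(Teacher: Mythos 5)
Your proposal is correct and follows essentially the same route as the paper, which simply invokes Lagrange inversion for the coefficients and the standard singularity analysis of tree-like functional equations (Flajolet--Sedgewick, Chap.~VII.4) for the singular expansion; you have merely written out the characteristic system and the local quadratic balance that this standard theory packages. All the constants check out ($T_0=(D+1)/D$, $z_0=D^D/(D+1)^{D+1}$, $F_{TT}=-D^2/(D+1)$, $\tau^2=\tfrac{2(D+1)}{D^3}h$), so nothing further is needed.
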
  
  
\begin{proof}
The first expansion follows immediately from
Proposition~\ref{prop:melonic} using Lagrange inversion formula.    The
singular expansion is a direct instance of the standard
theory of singularity analysis of simple trees  generating functions
\cite{flajoletsedgewick}[Chap. VII.4].

\qed
\end{proof}

\begin{proposition}
Let $\hat G$ be a rooted melon-free graph with $2k(\hat G)$ vertices, and
thus $(D+1)k(\hat G)$ edges.  The generating function $H_{\hat G}(z)$ of
colored graphs with core $\hat G$ with respect to the number of
black vertices is
\[
H_{\hat G}(z)= z^{ k( \hat G) }T(z)^{(D+1)k(\hat G) +1}.
\]
\end{proposition}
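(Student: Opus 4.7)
The plan is to apply Theorem~\ref{prop:core} directly: rooted colored graphs with core $\hat G$ are in bijection with tuples of possibly empty open melonic subgraphs attached at the half-edges of $\mathrm{op}(\hat G)$. Since $\hat G$ is a melon-free rooted colored graph with $2p$ vertices, it has $(D+1)p$ edges, one of which is the root. The core decomposition opens the root into two half-edges and attaches a possibly empty open melonic graph $G_\circ$, $G_\bullet$ to each of them, and attaches one possibly empty open melonic graph $G_e$ to each non-root edge $e$ of $\hat G$. The total number of melonic ``slots'' is therefore $2 + ((D+1)p - 1) = (D+1)p+1$.

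Next, I would translate the counting of open melonic graphs into the counting of rooted melonic graphs via the $\mathrm{op}/\mathrm{cl}$ correspondence, which restricts to a bijection between possibly empty open melonic graphs with prescribed color at the pair of half-edges and possibly empty rooted melonic graphs with that root color. By Proposition~\ref{prop:melonic} the generating function of possibly empty rooted melonic graphs (by number of black vertices) is $T(z)$, the solution of $T(z) = 1 + zT(z)^{D+1}$.

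Finally, since the bijection of Theorem~\ref{prop:core} preserves the number of black vertices (it partitions the vertex set of $G$ into the vertices of $\hat G$ and the vertices of the $G_i$), the core contributes the factor $z^p$ for its $p$ black vertices, and each of the $(D+1)p+1$ melonic slots contributes an independent factor $T(z)$. Multiplying these contributions yields
\[
F_{\hat G}(z) = z^{p}\, T(z)^{(D+1)p+1},
\]
as claimed. There is no real obstacle here; the only point requiring a brief verification is the slot count and the fact that the bijection faithfully transports the variable $z$ (\emph{i.e.} total black vertices decompose additively between the core and the inserted melonic subgraphs), both of which are immediate from the definitions recalled in Section~\ref{sec:structural}.
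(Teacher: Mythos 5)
Your proposal is correct and follows exactly the paper's route: the paper's proof is simply ``this immediately follows from the bijection of Theorem~\ref{prop:core}'', and you have filled in the same details (the $(D+1)p+1$ melonic slots, the identification of possibly empty open melonic graphs with rooted ones counted by $T(z)$, and the additivity of the black-vertex count).
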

\begin{proof}
This immediately follows from the bijection of Theorem~\ref{thm:core}. 
The case $k(\hat G) =0$ corresponds to the ring graph which is the core of the melonic graphs.

\qed
\end{proof}
   
\subsection{Chains and schemes}
 
In view of Theorem~\ref{thm:scheme}, in order to enumerate cores
in terms of reduced schemes, we will need several proper chain generating series,
depending on whether the chain is broken or not, on whether its
external edges have identical color or not and on whether its white squares are both on the top or not. Recall that a proper
chain has at least 4 internal vertices.

\begin{description}
\item[\it Arbitrary chains] Let us fix one color $c_1$. 
A non-empty chain with external colors $(c_1,c_1)$ (which is necessarily proper) consists of a non-empty
chain not reusing color $c_1$ followed by a $(D-1)$-dipole with right
external color $c_1$ and a possibly empty chain with external colors
$(c_1,c_1)$:
\begin{align*}
A_{=}(u)& =\frac{Du}{1-(D-1)u}\cdot u\cdot (1+A_{=}) \Rightarrow A_{=}(u)=
\frac{Du^2}{1-(D-1)u}\frac1{1-\frac{Du^2}{1-(D-1)u}} \crcr
& = \frac{Du^2}{(1+u)(1-Du)} =  Du^2  \frac{  1 + (D-1)u - Du^2  }{ (1-u^2) (1-D^2u^2)  }\; .
\end{align*}

Now fix a second color $c_2\neq c_1$. A proper chain
with external colors $(c_1,c_2)$ is
either a dipole with external colors $(c_1,c_2)$ followed by a non empty chain with equal external colors $(c_2,c_2)$,
or a dipole with external colors $(c_1,c'), c' \neq c_2$ followed by either a dipole or a non empty chain with external colors $(c',c_2)$:
\begin{align*}
& A_{\neq} = u A_{=} + (D-1) u^2 + (D-1) u A_{\neq} \crcr
& \Rightarrow A_{\neq} = u^2\frac{ (D-1) + Du}{(1+u)(1-Du)} =  u^2 \frac{ (D-1) + (D^2-D+1) u - D^2u^3 }{(1-u^2)(1-D^2 u^2)} \;.
\end{align*}
The chains with an even number of dipoles correspond to the even powers of $u$, while the ones with an odd number of dipoles to the odd powers of $u$,
hence:
\begin{align*}
& A_{=,\genfrac{}{}{0pt}{}{\circ  \bullet }{   \bullet \circ } } (u) = 
 \frac{ Du^2  (1 - Du^2) }{ (1-u^2) (1-D^2u^2)  } \; ,
\qquad
 A_{=;\genfrac{}{}{0pt}{}{\circ  \circ }{   \bullet \bullet } }(u) = 
\frac{ Du^2  ( D-1 ) u }{ (1-u^2) (1-D^2u^2)  } \; ,\crcr
& A_{\neq;\genfrac{}{}{0pt}{}{\circ  \bullet }{   \bullet \circ } } (u) = 
 \frac{  u^2 (D-1) }{ (1-u^2) (1-D^2u^2)  } \; , \qquad 
 A_{\neq;\genfrac{}{}{0pt}{}{\circ  \circ }{   \bullet \bullet } }(u) = 
\frac{ u^3 \Big[ D^2-D+1  - D^2 u^2 \Big]  }{ (1-u^2) (1-D^2u^2)  } \; .
\end{align*}
\item[\it Unbroken chains] Let us fix two colors $c_1\neq c_2$. There is
  exactly one $(c_1,c_2)$-unbroken chain with $2k$ vertices, for $k\geq1$,
  so that the generating function of proper unbroken chains with respect to the number of
  black vertices, is $ U(u)= u^2 / (1-u)$.
  The half-edges have different colors if the number of dipoles is
  odd, and equal colors if it is even, hence the generating function for the two kinds of proper unbroken chains are:
  \[
  U_{\neq; \genfrac{}{}{0pt}{}{    \circ   \circ }{   \bullet \bullet }  }(u)=\frac{u^3}{1-u^2} \; ,
  \qquad  U_{=; \genfrac{}{}{0pt}{}{\circ  \bullet }{   \bullet \circ } }(u)=\frac{u^2}{1-u^2} \; .
\]
\item[\it Broken chains] Let us fix two colors $c_1$ and $c_2$ (maybe
  equal). A proper broken chain with external colors $(c_1,c_2)$ is an arbitrary
  proper chain which is not unbroken. If $c_1=c_2$, all the $D$ possible second
  colors for the unbroken chain have to be considered: 
  \begin{align*}
  B_{=;\genfrac{}{}{0pt}{}{\circ  \bullet }{   \bullet \circ } }(u)  & = 
  A_{=;\genfrac{}{}{0pt}{}{\circ  \bullet }{   \bullet \circ } } (u) -D
  U_{=;\genfrac{}{}{0pt}{}{\circ  \bullet }{   \bullet \circ } }(u)
   = \frac{ D^2 (D-1) u^4 }{  (1-u^2) (1-D^2u^2)   } \; ,
  \crcr
  B_{=; \genfrac{}{}{0pt}{}{\circ  \circ }{   \bullet \bullet }  }(u)
  & = A_{=; \genfrac{}{}{0pt}{}{\circ  \circ }{   \bullet \bullet }  } (u)  = \frac{ D   ( D-1 ) u^3 }{ (1-u^2) (1-D^2u^2)  } \; ,
  \crcr 
  B_{\neq;\genfrac{}{}{0pt}{}{\circ  \bullet }{   \bullet \circ } }(u)  & =
   A_{\neq;\genfrac{}{}{0pt}{}{\circ  \bullet }{   \bullet \circ } } (u)    = \frac{  (D-1) u^2 }{ (1-u^2) (1-D^2u^2)  } 
  \crcr
  B_{\neq; \genfrac{}{}{0pt}{}{\circ  \circ }{   \bullet \bullet }  }(u)
 & = A_{\neq;\genfrac{}{}{0pt}{}{\circ  \circ }{   \bullet \bullet } }(u) 
    -  U_{\neq; \genfrac{}{}{0pt}{}{\circ  \circ }{   \bullet \bullet }  }(u)
   = \frac{ D(D-1)  u^3 }{ (1-u^2) (1-D^2u^2)  } \; .
  \end{align*}
\end{description}
This is summarized in Fig.~\ref{fig:chains11}.
\begin{figure}[ht]
\begin{center}
\psfrag{c}{$c_1$}
\psfrag{c'}{$c_2$}
\psfrag{Bwwcc1}{{\footnotesize $B_{\neq; \genfrac{}{}{0pt}{}{\circ  \circ }{   \bullet \bullet }  }(u) = 
\frac{ D(D-1)  u^3 }{ (1-u^2) (1-D^2u^2)  } $ } }
\psfrag{Bwwcc}{ {\footnotesize $ B_{=; \genfrac{}{}{0pt}{}{\circ  \circ }{   \bullet \bullet }  }(u)
= \frac{ D   ( D-1 ) u^3 }{ (1-u^2) (1-D^2u^2)  }  $} }
\psfrag{Bwbcc1} { {\footnotesize $  B_{\neq;\genfrac{}{}{0pt}{}{\circ  \bullet }{   \bullet \circ } }(u) =
\frac{  (D-1) u^2 }{ (1-u^2) (1-D^2u^2)  } $ } }
\psfrag{Bwbcc}{ {\footnotesize $ B_{=;\genfrac{}{}{0pt}{}{\circ  \bullet }{   \bullet \circ } }(u) =  
\frac{ D^2 (D-1) u^4 }{  (1-u^2) (1-D^2u^2)   } $} }
\psfrag{Uww} { {\footnotesize $U_{\neq; \genfrac{}{}{0pt}{}{\circ  \circ }{   \bullet \bullet }  }(u)=\frac{u^3}{1-u^2} $ } }
\psfrag{Uwb} { {\footnotesize $ U_{=; \genfrac{}{}{0pt}{}{\circ  \bullet }{   \bullet \circ } }(u)=\frac{u^2}{1-u^2} $ } }
\includegraphics[scale=.4]{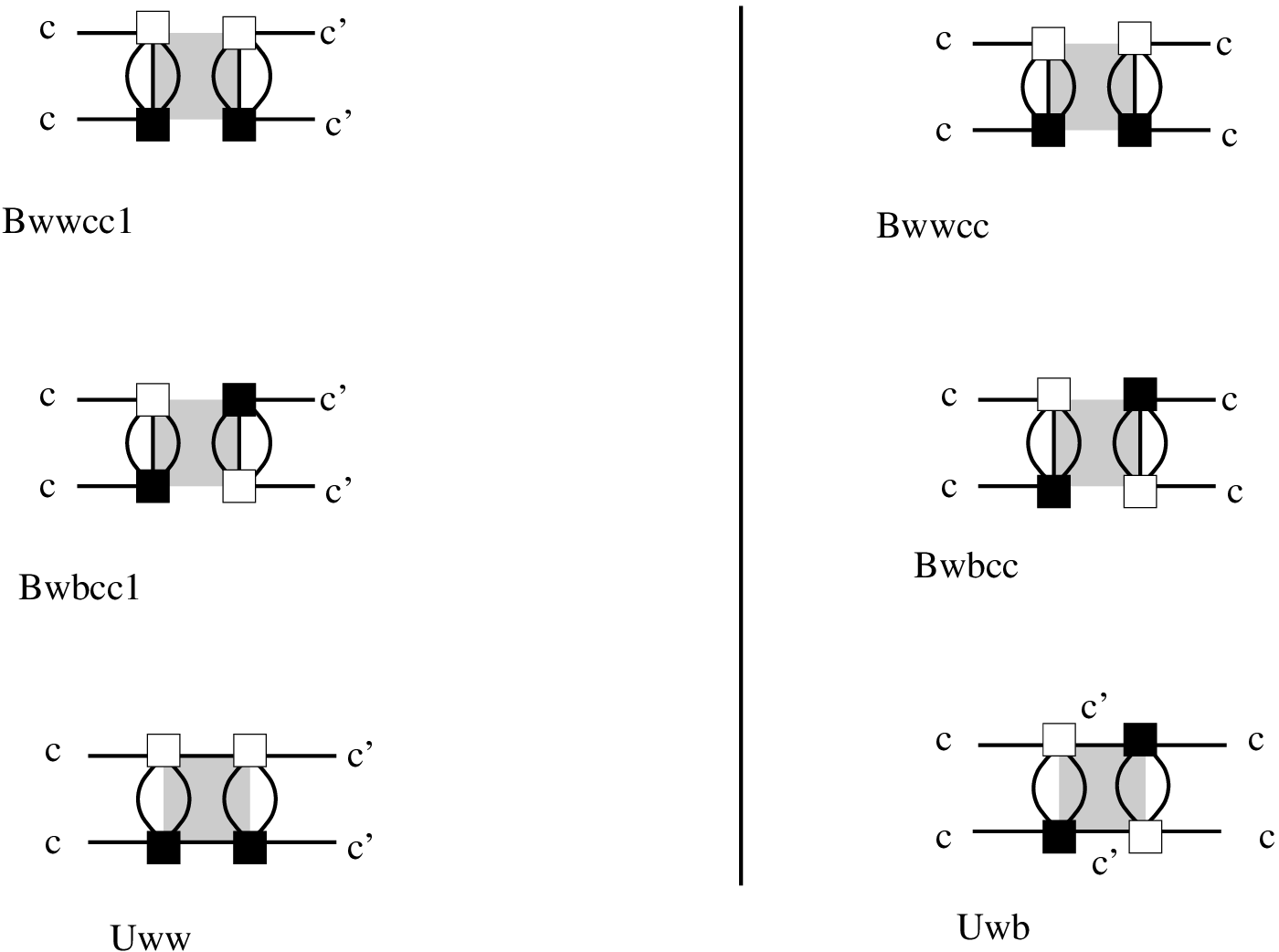}
\end{center}
\caption{The generating series of the chain-vertices.\label{fig:chains11}}
\end{figure}

\begin{proposition}\label{prop:count-core}
Let $\tilde S$ be a reduced scheme with $2k(\tilde S)$ black and white vertices, 
and with $ {\bf U}_{\neq; \genfrac{}{}{0pt}{}{\circ  \circ }{   \bullet \bullet }  }$ chain-vertices 
of type $ U_{\neq; \genfrac{}{}{0pt}{}{\circ  \circ }{   \bullet \bullet }  }$, 
$ {\bf U}_{=; \genfrac{}{}{0pt}{}{\circ  \bullet }{   \bullet \circ } }$ of type 
$ U_{=; \genfrac{}{}{0pt}{}{\circ  \bullet }{   \bullet \circ } }$, 
$ {\bf B}_{\neq; \genfrac{}{}{0pt}{}{\circ  \circ }{   \bullet \bullet }  }$ of type 
$  B_{\neq; \genfrac{}{}{0pt}{}{\circ  \circ }{   \bullet \bullet }  }$, 
$ {\bf B}_{=; \genfrac{}{}{0pt}{}{\circ  \circ }{   \bullet \bullet }  } $ of type 
$ B_{=; \genfrac{}{}{0pt}{}{\circ  \circ }{   \bullet \bullet }  } $, 
$ {\bf B}_{\neq;\genfrac{}{}{0pt}{}{\circ  \bullet }{   \bullet \circ } }  $ of type 
$ B_{\neq;\genfrac{}{}{0pt}{}{\circ  \bullet }{   \bullet \circ } }  $ and 
$ {\bf B}_{=;\genfrac{}{}{0pt}{}{\circ  \bullet }{   \bullet \circ } }  $ of type 
$  B_{=;\genfrac{}{}{0pt}{}{\circ  \bullet }{   \bullet \circ } }  $.
The generating function $G_{\tilde S}(u)$ of rooted melon-free colored graphs with scheme $ \tilde S$ with respect to the number of black vertices is:
\begin{align*}
 G_{\tilde S}(u) & = u^{k(\tilde S)} \; 
 \Bigl[ U_{\neq; \genfrac{}{}{0pt}{}{\circ  \circ }{   \bullet \bullet }  }(u)
 \Bigr]^{  {\bf U}_{\neq; \genfrac{}{}{0pt}{}{\circ  \circ }{   \bullet \bullet }  }  }
 \Bigr[ U_{=; \genfrac{}{}{0pt}{}{\circ  \bullet }{   \bullet \circ } }(u)
 \Bigr]^{  {\bf U}_{=; \genfrac{}{}{0pt}{}{\circ  \bullet }{   \bullet \circ } }   } \times \crcr
& \qquad \qquad \times \Bigl[ B_{\neq; \genfrac{}{}{0pt}{}{\circ  \circ }{   \bullet \bullet }  }(u)
 \Bigr]^{  {\bf B}_{\neq; \genfrac{}{}{0pt}{}{\circ  \circ }{   \bullet \bullet }  } } 
\Bigl[ B_{=; \genfrac{}{}{0pt}{}{\circ  \circ }{   \bullet \bullet }  } (u)
 \Bigr]^{  {\bf B}_{=; \genfrac{}{}{0pt}{}{\circ  \circ }{   \bullet \bullet }  } }
\Bigl[ B_{\neq;\genfrac{}{}{0pt}{}{\circ  \bullet }{   \bullet \circ } } (u)
 \Bigr]^{  {\bf B}_{\neq;\genfrac{}{}{0pt}{}{\circ  \bullet }{   \bullet \circ } } }
\Bigl[ B_{=;\genfrac{}{}{0pt}{}{\circ  \bullet }{   \bullet \circ } } (u)
 \Bigr]^{ {\bf B}_{=;\genfrac{}{}{0pt}{}{\circ  \bullet }{   \bullet \circ } } }
\crcr
 & =    \frac{  P_{ \tilde S}(u)  }{ (1-u^2)^{ {\bf U} + {\bf B} } (1-D^2u^2)^{\bf B} } \; ,
\end{align*}
where ${\bf U} =  {\bf U}_{\neq; \genfrac{}{}{0pt}{}{\circ  \circ }{   \bullet \bullet }  } +
 {\bf U}_{=; \genfrac{}{}{0pt}{}{\circ  \bullet }{   \bullet \circ } }
$, 
${\bf B} = 
 {\bf B}_{\neq; \genfrac{}{}{0pt}{}{\circ  \circ }{   \bullet \bullet }  } +
  {\bf B}_{=; \genfrac{}{}{0pt}{}{\circ  \circ }{   \bullet \bullet }  } + 
   {\bf B}_{\neq;\genfrac{}{}{0pt}{}{\circ  \bullet }{   \bullet \circ } }  +
   {\bf B}_{=;\genfrac{}{}{0pt}{}{\circ  \bullet }{   \bullet \circ } } 
$
and $P_{ \tilde S}(u)$ is the monomial:
\begin{align*}
 P_{\tilde S}(u) & = 
 (D-1)^{ {\bf B}   }  \;\;  D^{ {\bf B}_{\neq; \genfrac{}{}{0pt}{}{\circ  \circ }{   \bullet \bullet }  } +
  {\bf B}_{=; \genfrac{}{}{0pt}{}{\circ  \circ }{   \bullet \bullet }  } + 
  2 {\bf B}_{=;\genfrac{}{}{0pt}{}{\circ  \bullet }{   \bullet \circ } }  } \crcr
& \qquad  \times u^{k(\tilde S) +2 {\bf U}_{=; \genfrac{}{}{0pt}{}{\circ  \bullet }{   \bullet \circ } } +
  3 {\bf U}_{\neq; \genfrac{}{}{0pt}{}{\circ  \circ }{   \bullet \bullet }  }
+ 3 {\bf B}_{\neq; \genfrac{}{}{0pt}{}{\circ  \circ }{   \bullet \bullet }  } +
  3{\bf B}_{=; \genfrac{}{}{0pt}{}{\circ  \circ }{   \bullet \bullet }  } + 
  2 {\bf B}_{\neq;\genfrac{}{}{0pt}{}{\circ  \bullet }{   \bullet \circ } }  +
  4 {\bf B}_{=;\genfrac{}{}{0pt}{}{\circ  \bullet }{   \bullet \circ } } 
 }   \; .
\end{align*}
\end{proposition}

\begin{proof}
This follows immediately from the bijection in Theorem~\ref{thm:scheme}.

\qed
\end{proof}

\subsection{The enumeration of rooted colored graph of fixed degree}

Putting together Theorem~\ref{thm:core}, Theorem~\ref{thm:scheme} and Theorem~\ref{thm:finiteness} 
we obtain the enumeration of the edge colored graphs of fixed degree.
\begin{theorem}\label{thm:count-all}
Let $\delta\geq 0$.  The generating function of rooted colored graphs  with root edge of color $0$
and degree $\delta$ with respect to the number of black vertices is
\[
H^0_\delta(z)=T(z)\sum_{ \tilde S \in \tilde {\cal S}^0_\delta} G_{ \tilde S} \left( zT(z)^{D+1} \right)
\]
where the sum runs over the finite set $ \tilde {\cal S}^0_\delta$ of reduced schemes with degree $\delta$ and root edge of color $0$.
\end{theorem}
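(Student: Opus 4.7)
The proof is essentially an assembly of the structural results already proved. The plan is to translate the two-step combinatorial decomposition (melonic dressing on top of a core, then scheme decomposition of the core) into a composition of generating functions, with the substitution $U(z) = zT(z)^{D+1}$ playing the role of the bookkeeping device that absorbs the melonic decorations.

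First, I would start from Theorem~\ref{prop:core}: a rooted colored graph of degree $\delta$ with root color $0$ decomposes uniquely as a melon-free core $\hat G$ of the same degree (with root color $0$, and say $2p$ black/white vertices, hence $(D+1)p$ edges) together with a tuple of possibly empty rooted melonic graphs — one per non-root edge of $\hat G$, plus one for each of the two root half-edges — for a total of $(D+1)p+1$ melonic slots. By Proposition~\ref{prop:melonic}, each such slot is generated by $T(z)$, while each black vertex of the core contributes a factor $z$. Summing over cores of degree $\delta$ with root color $0$ thus gives
\[
F^0_\delta(z)=\sum_{\hat G}z^{p(\hat G)}T(z)^{(D+1)p(\hat G)+1}=T(z)\sum_{\hat G}U(z)^{p(\hat G)},
\]
where $U(z)=zT(z)^{D+1}$ and the sum ranges over rooted melon-free cores of degree $\delta$ with root color $0$.

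Next I would apply Proposition~\ref{prop:scheme} to further decompose each core: a melon-free core is a reduced scheme $s$ in which each chain-vertex has been replaced by a chain of its type. Because each chain contributes only internal vertices (all accounted for via $(D-1)$-dipoles, hence in matched black/white pairs), and because the regular vertices of the scheme are themselves paired, the variable $u$ used in Proposition~\ref{prop:count-core} is exactly the generating variable for black vertices of the core. Hence substituting $u=U(z)$ in $G_s(u)$ transforms the core generating function $G_s$ into the generating function of rooted colored graphs whose core has scheme $s$, counted by black vertices.

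Finally, Theorem~\ref{prop:finiteness} guarantees that the set $S^0_\delta$ of reduced schemes of degree $\delta$ with root color $0$ is finite, so exchanging sum and substitution is legitimate and yields
\[
F^0_\delta(z)=T(z)\sum_{s\in S^0_\delta}G_s\bigl(U(z)\bigr)=T(z)\sum_{s\in S^0_\delta}G_s\bigl(zT(z)^{D+1}\bigr),
\]
which is the claimed formula. There is no real obstacle here: the work has been done in the preceding sections, and the only nontrivial point to check is the consistency of the counting variable across the two substitution steps, \emph{i.e.} that $U(z)$ correctly tracks black vertices of the core while $T(z)$ absorbs the melonic decorations attached to every edge and to the two root half-edges.
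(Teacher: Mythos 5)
Your proposal is correct and follows exactly the route the paper takes: it assembles Theorem~\ref{prop:core}, Proposition~\ref{prop:melonic}, Proposition~\ref{prop:scheme}, Proposition~\ref{prop:count-core} and Theorem~\ref{prop:finiteness}, with the substitution $u=zT(z)^{D+1}$ tracking black vertices of the core. The slot count $(D+1)p+1$ and the consistency check on the counting variable match the paper's argument, so nothing is missing.
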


Together with Proposition \ref{prop:count-core}, this theorem implies Theorem \ref{thm:main1} in the Introduction.
The first values can be computed explicitly. 

\subsubsection{Degree $\delta=0$} The reduced schemes of degree zero have no chain-vertex. 
First they can not have any non-separating chain-vertex. Assume now that they have 
separating chain-vertices. Deleting the chain vertices one obtains several connected components 
which are (see Section~\ref{sec:prf1} ) colored graphs with only black marks. All these connected components have degree $0$, and some of them have only one mark, which is impossible
according to Lemma~\ref{lem:zerodegcomp}. It follows that the reduced schemes of degree zero are melon-free colored graphs of degree zero, hence the unique such reduced scheme is 
the ring graph and, as expected: 
\[
 H^0_0(z) = T(z) \; .
\]

\subsubsection{Degree $\delta = D-2$}

We are interested in identifying the smallest integer $\delta_{\rm min}>0$ such that there exist reduced schemes $\tilde S_{\rm min}$ 
with $\delta(\tilde S_{\rm min}) = \delta_{\rm min}$ and furthermore classify all the reduced schemes $\tilde S_{\rm  min}$.

\begin{lemma}\label{lem:maxofmin}
 The minimal non-zero degree $\delta_{\rm min}$ is at most $D-2$.
\end{lemma}
\begin{proof} For any $D$, let us consider the two reduced schemes presented in Fig.~\ref{fig:lolly}
where the vertical arm can be empty, represent a dipole or a chain vertex (broken or unbroken), and the unbroken chain vertex can be replaced by a 
unique dipole for the case on the left hand side.
  \begin{figure}[ht]
\begin{center}
\psfrag{c1}{$c_1$}
\psfrag{c2}{$c_2$}
\includegraphics[scale=.4]{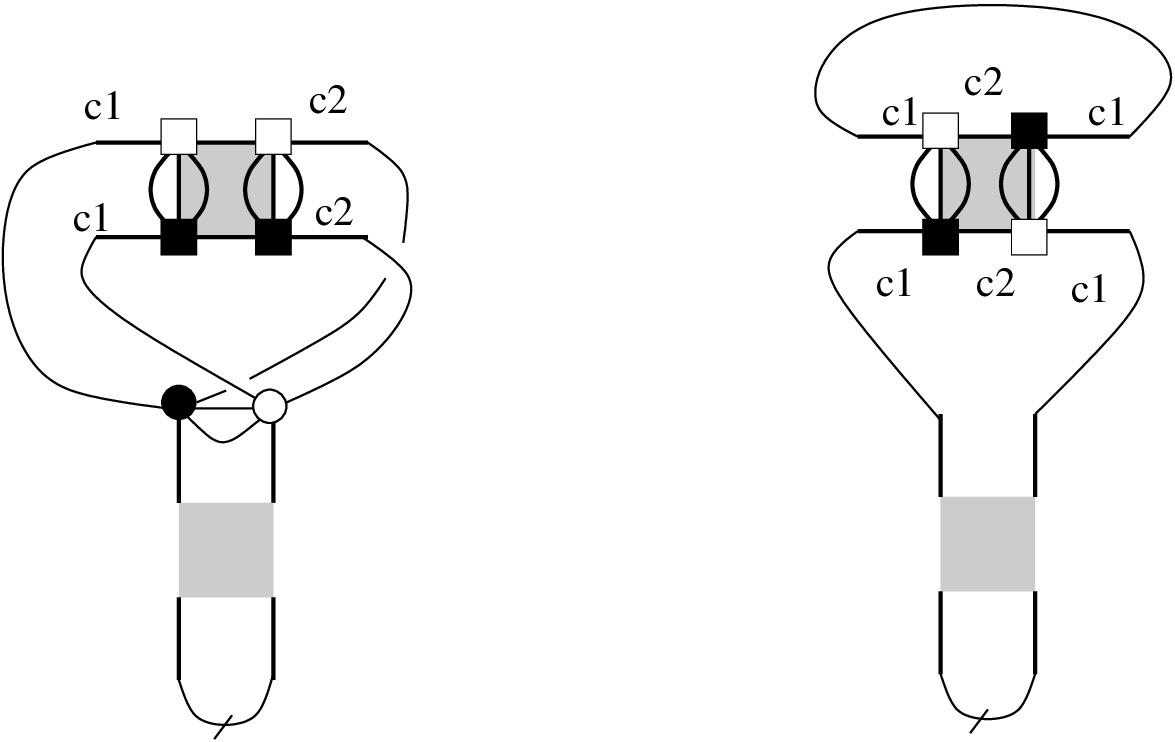}
\end{center}
\caption{The ``lollipop'' reduced schemes.\label{fig:lolly}}
\end{figure}

These schemes have a non-separating unbroken chain-vertex in Case III.b (single face of colors $(c_1,c_2)$ after deletion), hence
the degree goes down by $D-2$ upon deletion and the resulting scheme (which is no longer reduced) has degree $0$.

\qed
\end{proof}

We will now show that the minimal non trivial degree is $D-2$ and that the ``lollipop'' schemes in Fig.~\ref{fig:lolly} are the only reduced schemes of degree $D-2$.
Let us denote $\hat S_{\rm min}$ the melon free colored graph with degree $\delta_{\rm min}$ obtained from $\tilde S_{\rm min}$ by replacing all the chain-vertices by their minimal realization
as chains of $(D-1)$-dipoles.

\begin{lemma}\label{lem:whatdipole1} $\hat S_{\rm min}$ can not contain:
\begin{itemize}
 \item $\forall D\ge 3$, two separating $(D-1)$-dipoles which do not belong to the same maximal chain.
 \item $\forall D\ge 4$  a (partially) separating $(D-q)$-dipole with $2 \le q \le D-2$ which separates it in more than two connected components.
 \item $\forall D\ge 5$ a non-separating $(D-2)$-dipole.
 \item $\forall D\ge 5$ a $2$-dipole which separate $\hat S_{\rm  min}$ into exactly two connected components.
 \item $\forall D\ge 6$ a $(D-q)$-dipole with $3 \le q \le D-3$.
\end{itemize}
\end{lemma}
\begin{proof} Assume that $\hat S_{\rm min}$ contains a $(D-q)$ dipole. By removing the dipole,  $\hat S_{\rm min}$ separates into $C$ connected components
and from Eq.~\eqref{eq:variationbydeletion} the variation of the degree through the removal is at least: 
\[ D(q+1-C) - t_1(t_1-1) -\dots t_C(t_C-1) \; ,\]
where $t_i\ge 1$ denotes the number of new edges in the component $i$ (hence $t_1+\dots +t_C =q+1$). This bound is saturated only if, in each connect component,
all the faces containing new edges are of Type b, \ie any pair of new  edges in the same connected component belongs to the same face after deletion.

{\bf First item.} By removing two separating $(D-1)$-dipoles which do not belong to the same maximal chain, $\hat S_{\rm min}$ separates into 
three connected components. But this impossible as at most one of the components can have zero degree (the one containing the root), and $\delta_{\rm  min}$
can not be distributed among the two remaining components.

{\bf Second item.} By removing such a $(D-q)$-dipole, $\hat S_{\rm min}$  splits into at least three connected components which, by the same argument as before, is impossible.

{\bf Third item.} In the case $q=2$, $C=1$ (hence $t_1 = 3$),  the variation of the degree is at least $2D-6 > D-2, \; \forall D\ge 5$.

{\bf Fourth item.} In the case $q=D-2$, $C=2$ the variation of the degree is at least:
\[
 D (D-3) - t_1(t_1-1) - t_2 (t_2-1) \; ,
\]
which, using $t_1 + t_2 = q+1=D-1$, amounts to $ -2t_1^2 + 2 (D-1) t_1 -2$.
Taking into account the sign of $t_1^2$, we have:
\begin{align*}
& \min_{t_1\in \{1,\dots,D-2\} } \left\{   -2t_1^2 + 2(D-1) t_1 -2 \right\} = \min_{t_1 = 1, t_1 = D-2 } \left\{   -2t_1^2 + 2(D-1) t_1 -2  \right\} =\crcr
& \qquad \qquad  = 2D-6 >D-2 \;, \qquad \forall D\ge 5 \;.
\end{align*}
 
{\bf Fifth item.} Let us now consider the variation of the degree with the deletion of a $(D-q)$-dipole in the only two possible cases, $C=1,2$:
\begin{itemize} 
\item $C=1$, (hence $t_1 = q+1$). The variation of the degree with the deletion is $D q - q(q+1)$,
and, due to the sign of $q_2$, we have:
\begin{align*}
&  \min_{q \in \{ 3,\dots D-3\} } \left\{ -q_2 + (D-1)q \right\} = \min_{q=3,q= D-3} \left\{ -q_2 + (D-1)q \right\} = \crcr
& \qquad \qquad = \min \{3D-12, 2D-6 \} > D-2  \;, \qquad \forall D\ge 6 \;.
\end{align*}
\item $C=2$. The variation of the degree is $  D(q-1) - t_1 (t_1-1) - t_2(t_2-1)$, which, using $t_1 + t_2 = q+1$, amounts to
$-2t_1^2 + 2(q+1) t_1 + D(q-1) -q(q+1)$.
Taking into account the sign of $t_1^2$, we have:
\begin{align*}
& \min_{t_1\in \{1,\dots,q\} } \left\{   -2t_1^2 + 2(q+1) t_1 + D(q-1) -q(q+1)  \right\} = \crcr
& \qquad = \min_{t_1 = 1, t_1 = q } \left\{   -2t_1^2 + 2(q+1) t_1 + D(q-1) -q(q+1)  \right\} = -q^2 + D(q-1) + q \;,
\end{align*}
and, taking the minimum over $q$, we have:
\begin{align*}
&   \min_{q \in \{ 3,\dots D-3\} } \left\{ -q^2 + D(q-1) + q  \right\} = \min_{q=3,q= D-3} \left\{ -q^2 + D(q-1) + q   \right\} =
   \crcr
& \qquad \qquad = \min \{ 2D-6 , 3D-12\} > D-2  \;, \qquad \forall D\ge 6 \; .
\end{align*}
\end{itemize}
which concludes.

 \qed
\end{proof}

\begin{lemma}\label{lem:whatdipole2}
For $D\ge 4$, if $\hat S_{\rm min}$ contains a non-separating $2$-dipole, then it contains a non-separating $(D-1)$-dipole.
\end{lemma}
\begin{proof} 

We will show that if  $\hat S_{\rm min}$  contains a non-separating $2$-dipole, then it it is one of the two graphs depicted in Fig.~\ref{fig:problemsolution},
(where the vertical arm can be empty or represent a chain of $(D-1)$-dipoles) hence contains a non-separating $(D-1)$-dipole.

\begin{figure}[ht]
\begin{center}
\psfrag{c1}{$c$}
\psfrag{c2}{$c'$}
\includegraphics[scale=.3]{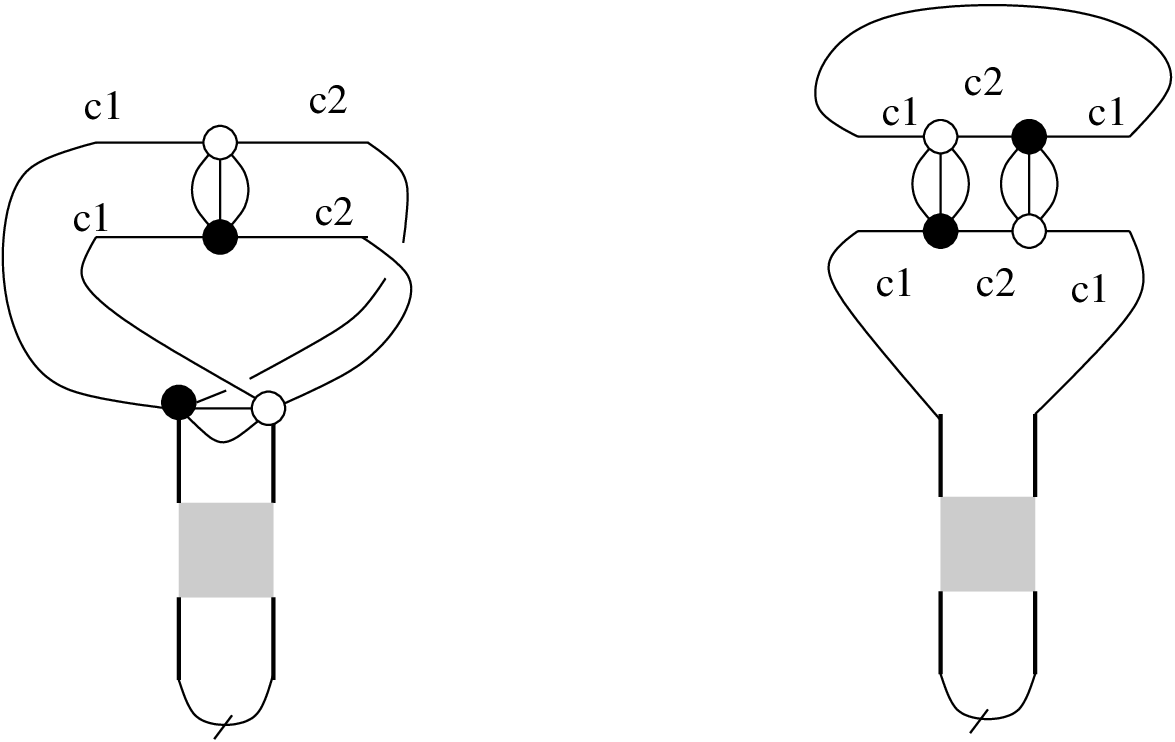}
\end{center}
\caption{Graphs with a non-separating $2$-dipole.\label{fig:problemsolution}}
\end{figure}

In the notations of the previous lemma, this corresponds to $q=D-2$ for and $C=1$. 
By deleting the $2$-dipole the degree goes down by at least $D-2$, and it goes down by exactly $D-2$ only if
for any two external colors $c_1,c_2$ of the $2$-dipole, all the faces $(c_1,c_2)$ incident to the $2$-dipole are of Type b 
(single face after the deletion).

After the deletion of the $2$-dipole one obtains a graph $\hat S'$ of degree zero. 
Inspired by Section \ref{sec:prf1}, let us mark the $D-1$ new edges of $\hat S'$ generated by the deletion with blue marks: any couple of marked edges in $\hat S'$
belong to a face. Observe that if the root of $\hat S_{\rm min}$ is incident to the dipole, then the root of $\hat S'$ is marked.  
While $\hat S'$ is melonic and can have melonic subgraphs, because $\hat S_{\rm min}$ is melon free, any melonic subgraph of $\hat S'$ 
must contain at least a marked edge.

If $\hat S'$ has two vertices, it is immediate to see that we are in one of the two cases depicted in Fig.~\ref{fig:problemsolution} with empty vertical arm, 
depending on whether the root of $\hat S'$ is marked or not.

If $\hat S'$ has four vertices or more, then it has at least two pairs of vertices, $(u_{\circ}, u_{\bullet})$ and $(v_{\circ}, v_{\bullet})$ connected by $D$ parallel edges, hence 
at least one pair, say $(v_{\circ}, v_{\bullet})$, connected by $D$ parallel non-root edges. 
Then at least one of the parallel edges connecting $v_{\circ}$ and $v_{\bullet}$ must be marked.
This edge belongs to $D-1$ faces of degree $2$ made only of parallel edges connecting $v_{\circ}$ and $v_{\bullet}$, and to only one face of degree larger than two.
Thus at least $D-3$ other parallel edges must be marked.

This implies that $\hat S'$ can not have a third pair of vertices $ (w_{\circ}, w_{\bullet}) $ connected by $D$ parallel edges, as this
would require another $D-2$ marked edges and, as $D\ge 4$, $2D-4>D-1$. It follows that $\hat S'$ is a chain of $(D-1)$-dipoles such that:
\begin{itemize}
 \item the left half-edges (incident to $u_{\circ}$ and $u_{\bullet}$) are joined together into the root edge,
 \item the chain has at least two $(D-1)$-dipoles $u_{\circ} - u_{\bullet} $ and  $v_{\circ} - v_{\bullet}$,
 \item the right half-edges are joined  into an edge,
 \item at least $D-2$ of the parallel edges connecting $v_{\circ}$ and $v_{\bullet}$ are marked.
\end{itemize}

Finally, the last marked edge either connects also $v_{\circ}$ and $v_{\bullet}$, or it is incident to one of them (say $v_{\bullet}$), 
as only these edges in the chain share faces with $D-2\ge 2$ of the parallel edges connecting  $v_{\circ}$ and $v_{\bullet}$.
This is depicted in Fig.~\ref{fig:almostthere}.
\begin{figure}[ht]
\begin{center}
\psfrag{vc}{$v_{\circ}$}
\psfrag{vb}{$v_{\bullet}$}
\includegraphics[scale=.4]{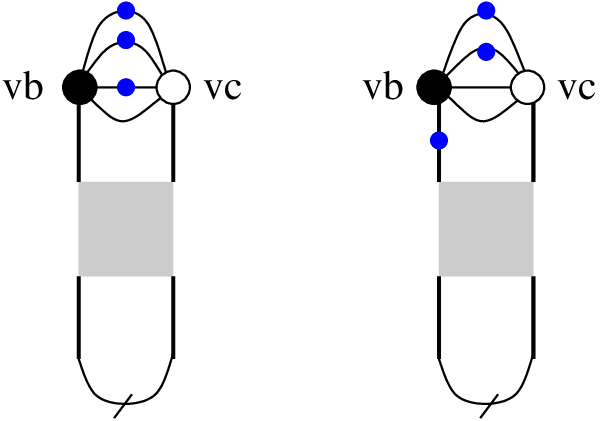}
\end{center}
\caption{Blue marks after deletion of a non-separating $2$-dipole.\label{fig:almostthere}}
\end{figure}
Reinstating the $2$-dipole leads to the two cases in Fig.~\ref{fig:problemsolution}.

 \qed
\end{proof}

\begin{lemma}\label{lem:whatdipole3}
For $D\ge 4$, if $\hat S_{\rm min}$ contains a $(D-2)$-dipole which separates it into two connected components, then it contains a non-separating $(D-1)$-dipole.
\end{lemma}
\begin{proof} 
Let us remove the $(D-2)$-dipole and mark the new edges with blue marks. The graph $\hat S_{\rm min}$ separates into two connected components
of degree zero such that one component has two marks and the other one has one.

The connected component not containing the root edge $r(\hat S_{\rm min})$ has at least two vertices and, as it does not have any melonic subgraph made only of unmarked edges,
it must contain two marked edges. It follows that this component is a chain having at least a $(D-1)$-dipole with left 
half-edges and right half-edges joined into (marked) edges. Any $(D-1)$-dipole in the chain is a non-separating
dipole of $\hat S_{\rm min}$.
\end{proof}

\begin{proposition}\label{prop:whatdipole3}
A graph $\hat S_{\rm min}$ must contain a Case II.B non-separating $(D-1)$-dipole (single resulting face after the removal).
\end{proposition}
\begin{proof}

First observe that it suffices to show that $\hat S_{\rm min}$ contains a non-separating $(D-1)$-dipole: as 
$\delta_{\rm min}\le D-2$, this dipole can only be a Case II.B non-separating dipole.
If $\hat S_{\rm min}$ contains a separating $(D-1)$-dipole, removing the maximal separating chain to which this dipole belongs,
$\hat S_{\rm min}$ splits into two graphs, one of which has degree $\delta_{\rm min}$ and, from Lemma \ref{lem:whatdipole1},
does not contain any separating $(D-1)$-dipole. It thus suffices then to show that any $\hat S_{\rm min}$ contains 
at least a $(D-1)$-dipole. According to Eq.~\eqref{eq:delta'}:
\[ (D+1)\delta_{\rm  min} + 2F_1(\hat S_{\rm min})  =  D(D+1)+ \sum_{p\geq
    2} \left[(D-1)p-D-1 \right]F_p(\hat S_{\rm min})  \ge D(D+1)\; , \]
and, as $\delta_{\rm min}\le D-2$, $F_1( \hat S_{\rm min}) \ge D+1$. Thus $\hat S_{\rm min}$ has at least a face of degree $2$ which does not contain the root edge,
hence belongs to some $(D-q)$-dipole for $1\le q\le D-2$. We denote this dipole $d$. We have:
\begin{itemize}
 \item for $D=3$, $d$ is a $2$-dipole, hence a $(D-1)$-dipole.
 \item for $D=4$, $d$ is either:
  \begin{itemize}
    \item a $2$-dipole, hence a $(D-2)$-dipole. Then $d$ can not be completely separating (Lemma~\ref{lem:whatdipole1}), 
         and if it is either non-separating (Lemma~\ref{lem:whatdipole2}) or partially separating (Lemma~\ref{lem:whatdipole3}),
         then $\hat S_{\rm min}$ contains a $(D-1)$-dipole.
    \item a $3$-dipole, hence a $(D-1)$-dipole.
  \end{itemize}
 \item for $D=5$, $d$ is either:
      \begin{itemize}
       \item a $2$-dipole. According to Lemma~\ref{lem:whatdipole1}, $d$ must be non-separating hence (Lemma~\ref{lem:whatdipole2}) $\hat S_{\rm min}$ contains a $(D-1)$-dipole.
       \item a $3$-dipole, hence a $(D-2)$-dipole. Then (Lemma~\ref{lem:whatdipole1}) $d$ must be partially separating and (Lemma~\ref{lem:whatdipole3}) $\hat S_{\rm min}$ contains a $(D-1)$-dipole.       
       \item a $4$-dipole, hence a $(D-1)$-dipole.
      \end{itemize}
 \item for $D\ge 6$, $d$ is either:
      \begin{itemize}
       \item a $2$-dipole. According to Lemma~\ref{lem:whatdipole1} $d$ must be non-separating hence (Lemma~\ref{lem:whatdipole2}) $\hat S_{\rm min}$ contains a $(D-1)$-dipole.
       \item a $(D-2)$-dipole. Then, from Lemma~\ref{lem:whatdipole1}, $d$ must be partially separating hence (Lemma~\ref{lem:whatdipole3}) $\hat S_{\rm min}$ contains a $(D-1)$-dipole.       
       \item a $(D-1)$-dipole.
      \end{itemize}
\end{itemize}
This concludes the proof. 

\qed
\end{proof}

We are finally in the position to classify the minimal reduced schemes and to determine the minimal non zero degree.

\begin{theorem}\label{thm:lolly}  
The minimal non zero degree is $\delta_{\rm min} = D-2$ and 
the only reduced schemes of minimal non zero degree  ($\tilde S_{\rm  min}$ with $\delta( \tilde S_{\rm  min}) = \delta_{\rm min}$) are  the lollipop schemes represented in Fig.~\ref{fig:lolly}.
\end{theorem}
\begin{proof} 

From Proposition~\ref{prop:whatdipole3}, the minimal realization $\hat S_{\rm min}$ of $\tilde S_{\rm min}$ as a colored graph
contains a non-separating  Case II.B $(D-1)$-dipole which we denote $d$. Deleting the \emph{maximal} (necessarily unbroken) chain $U_d$ containing $d$ in $\hat S_{\rm min}$,
the degree decreases by exactly $D-2$, hence $\delta_{\rm min}\ge D-2$ and in conjunction 
with Lemma \ref{lem:maxofmin} we conclude that $\delta_{\rm min} = D-2$.

The deletion leads to a colored graph $\hat S'_{\rm min}$ of degree 0 having two blue marks, 
such that any melonic subgraph of $\hat S'_{\rm min}$ contains at least a blue mark.

If $\hat S'_{\rm min}$ has only one edge carrying both marks, then it is either a ring or a chain with the left halfedges matched into the root edge and the right halfedges matched into the marked edge.
It follows that $\tilde S_{\rm  min}$ is of the form depicted on the right in Fig.~\ref{fig:lolly}.

Suppose now that $\hat S'_{\rm min}$ has two edges carrying the marks. Then $\hat S'_{\rm  min}$ must have at least a pair of vertices connected by $D$ parallel non-root edges. 
As any melonic subgraph of $\hat S'_{\rm  min}$ must have a blue mark, one of the 
$D$ parallel non-root edges is marked. If no other parallel non root-edge is marked, it follows that the $D-1$ parallel edges 
are a $(D-1)$-dipole in $\hat S_{\rm min}$ which extends the chain $U_d$. But this is not possible as $U_d$ is maximal. 
Hence both marked edges connect the same two vertices. It follows that $\hat S'_{\rm min}$ is a chain with left half-edges matched in the root edge,
right half-edges matched in an edge such that two internal edges of the rightmost dipole are marked
and $\tilde S_{\rm  min}$ is of the form depicted on the left in Fig.~\ref{fig:lolly}.

\qed
\end{proof}

Let us analyze first the vertical arm in Figure~\ref{fig:lolly}. Including the degenerate configurations, it can be empty, 
consist of one dipole, or consist of a (broken or unbroken) chain-vertex. Fixing the incoming color (bottom color of the arm in 
Figure~\ref{fig:lolly}) to be $c_1$, allowing all the possible outgoing colors $c_2$, and taking into account that, if the chain-vertex is unbroken
and has external colors $(c_1,c_1)$, then it can have any secondary color $c_2\neq c_1$, the generating function of the vertical arm 
in Figure~\ref{fig:lolly} is:
\begin{align*}
& 1   + B_{=;\genfrac{}{}{0pt}{}{\circ  \bullet }{   \bullet \circ } }(u)  
 +   D U_{=;\genfrac{}{}{0pt}{}{\circ  \bullet }{   \bullet \circ } }(u) 
 +  B_{=; \genfrac{}{}{0pt}{}{\circ  \circ }{   \bullet \bullet }  }(u)
+  D [ u+ B_{\neq; \genfrac{}{}{0pt}{}{\circ  \circ }{   \bullet \bullet }  }(u) 
  +   U_{\neq; \genfrac{}{}{0pt}{}{\circ  \circ }{   \bullet \bullet }  }(u) 
  + B_{\neq;\genfrac{}{}{0pt}{}{\circ  \bullet }{   \bullet \circ } }(u) 
  ] \crcr
& \qquad = 1  + \frac{Du^2(1-Du^2)}{(1-u^2)(1-D^2u^2)} + 
\frac{ D   ( D-1 ) u^3 }{ (1-u^2) (1-D^2u^2)  }+ \crcr
& \qquad \qquad  + D \left[  \frac{ u(1-Du^2)}{  (1-u^2)(1-D^2u^2)  }+  \frac{  (D-1)u^2}{  (1-u^2)(1-D^2u^2)  }\right] \crcr
& \qquad =  1 + \frac{Du^2}{ (1+u)(1-Du) } + \frac{Du}{ (1+u)(1-Du) } =\frac{1}{1-Du} \; .
\end{align*}

The generating function of the lollipop schemes are therefore:
\begin{align*}
&  T(z) \left[   \frac{1}{1-Du} \cdot u \cdot \binom{D}{2}  \cdot \frac{u}{1-u^2}\right]_{u= zT(z)^{D+1}} \; , \crcr
&  T(z) \left[  \frac{1}{1-Du} \cdot D \cdot \frac{u^2}{1-u^2}\right]_{u= zT(z)^{D+1}} \; ,
\end{align*}
where we counted the fact that the non separating chain-vertex can be reduced to a unique dipole for the leftmost scheme.
Putting the two together we obtain:
\begin{align*}
 H^0_{D-2}(z) & = T(z) \left[ \frac{1}{1-Du}    \left[ \binom{D}{2} +D \right]   \frac{u^2}{1-u^2}\right]_{u= zT(z)^{D+1}} \crcr
 & =  T(z) \frac{D(D+1)}{2}  \frac{ z^2 T(z)^{2D+2} }{ 1 - z^2 T(z)^{2D+2} } \frac{1}{1-DzT(z)^{D+1}} \; ,
\end{align*}
reproducing the result of \cite{ryanoriti}.  An alternative proof
would have been to list all the reduced schemes of
$\bigcup_{\delta\geq1} \tilde {\cal S}^0_\delta$ up to a sufficiently
large size and compute their degree and contribution: this is
admittedly quite tedious by hand, but could in principle easily lead
to an automated computation of the $H^0_\delta$  for small $\delta$.

\newpage

\section{Dominant schemes}\label{sec:asymptotics}

Combining Proposition~\ref{prop:count-core} and Theorem~\ref{thm:count-all}
with the singular expansion~\eqref{eq:sing}, we immediately see that 
$H^0_\delta(z)$ has radius of convergence $z_0$ and admits a singular
expansion near $z=z_0$ of the form: 
\[
H^0_\delta(z)= \frac{D+1}{D}\sum_{ \tilde S\in \tilde {\cal S}^0_\delta}\frac{ P_{\tilde S}( 1/ D)}{(1-1/D^2)^{{\bf U}+{\bf B}}[ 2 (D+1)/D^3]^{{\bf B}/2} 2^{\bf B} }(1-z/z_0)^{-{\bf B}/2} \left[ 1+O \left(\sqrt{1-z/z_0} \right) \right] \;,
\]
which is dominated by the reduced schemes that maximize ${\bf B}$. In other terms, with
probability tending to 1 when $k$ goes to infinity, a uniform random
colored graph with degree $\delta$ will have a reduced scheme that maximizes
${\bf B}$.
In order to identify these schemes we improve in this section the
analysis of the number of broken chain-vertices in a reduced scheme.

\subsection{A bound on the number of broken chain-vertices}

Let us use a simplification of the algorithm presented in Section~\ref{sec:iterate} and only remove broken chain-vertices in a reduced scheme $\tilde G$.
As the first step involves only non separating deletions, $\tilde G'$ is connected and every connected component of $\tilde G''$ has at least a black mark (or no separating deletion 
is performed). The algorithm goes through with several modification: 
\begin{itemize}
 \item the number of blue marks in $\tilde  G'$ and $\tilde G''$ is $m_{\rm blue} (\tilde G') = m_{\rm blue} (\tilde G'') =  2 q $, where $q$ is the number of non-separating deletions. 
 \item  the degree goes down by exactly $D$ with each non separating deletion $\delta( \tilde G) = \delta(\tilde G')+ D q$.
 \item the abstract graph $\mathfrak{F}$ associated to $\tilde G''$ is a tree $\mathfrak{T}$ and $m_{\rm black} ( \tilde G'') = 2 E(\mathfrak{T})$.    
 \item in view of Theorem~\ref{thm:lolly}, the minimal degree of a positive degree component
is $\delta_{\rm min}=(D-2)$. Denoting $c_+$ the number of such components, we have:
 \[
  (D-2) c_+ + D q  \le \delta( \tilde G)
 \]
\end{itemize}

Crucially, we have the following result.

\begin{lemma} The non root connected components of degree zero in $\tilde G''$ must have at least three marks (either blue or black). 
The non root connected components of degree zero with exactly three marks are either ring components 
or consist of two vertices connected by $D$ parallel edges, three of which are marked.
 \end{lemma}
 
 \begin{proof}
  We denote $\hat G$, $\hat G'$ and $\hat G''$ the minimal realizations of the schemes $\tilde G$, $\tilde G'$ and $\tilde G''$ as colored graphs. As
  $\tilde G$ is reduced, $\hat G$ is melon free.
  
Let us consider a non root zero degree component of $\hat G''$, say $\hat G_1$. Then $\hat G_1$ is a melonic graph such that any of its melonic subgraphs contains at least a marked edge.
Observe that $\hat G_1$ has at least a mark coming from deleting some separating maximal chain in $\hat G'$ (\ie some maximal chain in $\hat G$).
But then it must have at least another mark, as $\hat G$ has no melonic subgraph.

If $\hat G_1$ is a ring graph and has exactly two marks, then it corresponds to two chain-vertices in the scheme $\tilde G$ joined together into a longer chain, which is impossible
as $\tilde G$ is reduced.

If $\hat G_1$ is non trivial then, for any $D$-uple of parallel edges connecting the same two vertices in $\hat G_1$:
\begin{itemize}
 \item at least one of the parallel edges is marked (as any melonic subgraph of $\hat G_1$ has a marked edge),
 \item if one of the parallel edges has only one mark then at least another parallel edge has a mark, 
 (otherwise in $\tilde G$ the left (or right) half-edges of a chain-vertex would be incident to the left (or right) half-edges of a 
 $(D-1)$-dipole or chain-vertex).
\end{itemize}

If $\hat G_1$ has four vertices or more, it has two disjoint $D$-uples of parallel edges and, as there are at least two marks
for any $D$-uple of parallel edges, $\hat G_1$ has at least four marks.

If $\hat G_1$ has only two vertices joined by $D+1$ parallel edges (say $e^0,\dots e^D$), at least one of them (say $e^0$) is marked. 
Considering the $D$-uple $e^1, \dots e^D$, we conclude that one of these edges (say $e_1$) is also marked. 
If either $e^0$ and $e^1$ have only one mark, then considering the $D$-uple $e^0, e^2,\dots e^D$ or $e^1,e^2,\dots e^D$ we conclude that 
one of the edges $e^2,\dots e^D$ is marked. Thus either $\hat G_1$ has three marks on three distinct edges, or it has at least four marks.

\qed
\end{proof}
  
Let us denote $c_0$ the number of non root zero degree components (each of which has at least three marks) of $\tilde G''$. The positive degree components and the root component have at least a mark.
As $\mathfrak{T}$ is a tree we have:
\[
 1 + c_+ + c_0 =  E(\mathfrak{T}) +1 \Rightarrow  m_{\rm black} ( \tilde G'') = 2 c_+ + 2 c_0 \;,
\]
and counting the minimal number of marks in a connected component we have:
\[
 m_{\rm blue} (\tilde G'') + m_{\rm black} (\tilde G'') \ge 1 + c_+ + 3c_0 \; ,
\]
which implies:
\[
 2q  +  2 c_+ + 2 c_0 \ge 1 +  c_+ + 3c_0 \Rightarrow 2q  +    c_+ -1 \ge  c_0 \; .
\]

Moreover, this inequality is saturated only if all the zero degree non root components have exactly three marks, and all the positive degree components
and the root component have exactly one mark. The total number of broken chain-vertices is half the number of marks, hence:
\[
 {\bf B} = \frac{ m_{\rm blue} (\tilde G'')  + m_{\rm black}(\tilde G'')  }{2} = q  + c_+ + c_0 \le  3q  + 2   c_+ -1 \;.
\]
\begin{proposition}
The number ${\bf B}$ of broken chain-vertices in a reduced scheme $\tilde G$ of degree $\delta$ is at most:
\[
   {\bf B}_{\max} = 2 c_+  +3q  -1 \; ,
\]
and it saturates this bound only if in $\tilde G''$ all the positive degree components and the root component  have
exactly one mark and all the non root zero degree components have exactly three marks.

Furthermore, the parameters $c_+$ and $q $ satisfy:
\[
(D-2) c_+  + D q \leq \delta \; .
\]
and the constraint is saturated only if all the components of positive
degree of $\tilde G''$ have degree exactly $D-2$.
\end{proposition}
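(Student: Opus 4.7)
The plan is to execute the iterative removal of broken chain-vertices outlined just above the statement, and then carefully track two quantities along this process: the degree, which is distributed between the residual components and the non-separating removals, and the marks left on edges, which control the number of residual components.

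First I would set up the bookkeeping. Starting from $\tilde G$, remove the broken chain-vertices one at a time. By the case analysis of Section~\ref{sec:chain-vertex-deletion}, a non-separating broken chain-vertex removal decreases the degree by exactly $D$ (Cases II and III.b), while a separating removal splits the current component and the degrees of the two pieces sum to the original degree. Letting $q$, $p$, $c_+=|C_+'|-1$ and $C_0'$ denote the quantities already introduced in the text, this gives the identity
\[
\delta = \sum_{c\in C_+'}\delta(c) + Dq.
\]
Using the fact established in the exact enumeration discussion that the smallest possible positive degree of a component is $\alpha_D=D-2$, I would conclude $(D-2)c_+ + Dq \le \delta$, with equality exactly when every non-root component of $C_+'$ has degree $D-2$. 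This handles the second assertion.

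Next I would turn to the marked-edge count. Each non-separating removal creates at most two marked edges, and each separating removal creates at most two marked edges and one additional component; by induction on the number of removals this yields
\[
\sum_{c\in C_+'\cup C_0'} m(c) \le 2q + 2(|C_0'|+c_+).
\]
For the lower bound, I need to argue that each non-root component contributes at least one mark (immediate: it was detached by some removal) and that each degree-zero non-root component contributes at least three marks. Here is the one place where the argument departs slightly from Section~\ref{sec:iterate}: since unbroken chain-vertices and $(D-1)$-dipoles are \emph{not} removed in this process, a degree-zero component with only two marks would be a chain of $(D-1)$-dipoles with one mark at each extremity, meaning it received those marks from broken chain-vertices removed at its two ends. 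But then, together with the adjacent broken chains removed, it would form a longer proper chain, contradicting the maximality built into the scheme reduction (and the fact that such a configuration would not have left a broken chain on both sides). This is the main subtle point in the proof.

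Combining the two bounds gives $2q + c_+ - 1 \ge |C_0'|$, and then
\[
b = p + q = |C_0'| + c_+ + q \le 2c_+ + 3q - 1,
\]
establishing $b_{\max}$. Finally, tracing back where equality is possible: equality in the mark inequality forces every component of $C_0'$ to have exactly three marks and every component of $C_+'$ to have exactly one mark, which is precisely the saturation condition stated. The main obstacle, as noted, is the clean justification of the ``at least three marks'' lower bound for degree-zero components in the modified procedure, which rests on the maximality of the proper chains collapsed into chain-vertices when forming the scheme.
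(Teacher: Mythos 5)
Your proposal follows the paper's own proof essentially step for step: the same iterative removal of broken chain-vertices, the same degree bookkeeping $\delta=\sum_{c\in C_+'}\delta(c)+Dq$ with the minimal positive degree $D-2$, the same two-sided count of marked edges, and the same maximality-of-chains argument to rule out degree-zero components with only two marks. There is, however, one bookkeeping slip: the lower bound you state counts at least one mark on each \emph{non-root} component and three on each component of $C_0'$, which gives $\sum_c m(c)\ge c_+ + 3|C_0'|$ and hence only $|C_0'|\le 2q+c_+$, i.e.\ $b\le 2c_++3q$ --- you do not actually obtain the $-1$. The paper's bound is $\sum_c m(c)\ge c_+ + 1 + 3|C_0'|$, the extra $1$ coming from the root component, which also carries at least one mark (every component does, since each removal, separating or not, deposits a mark in every piece it touches and the graph is connected). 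That $-1$ is not cosmetic: it propagates into $b_{\max}=2c_++3q-1$ and hence into the exponent $\beta=2x+3y-1$ of the singular expansion, so you should add the one-line observation that the root component is marked as well.
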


\subsection{Realizability and dominant schemes}

Given integers $c_+$ and $q$ that satisfy the constraint
$(D-2)c_++Dq\leq\delta$, it is always possible to construct a reduced scheme
with these parameters that has $2c_++3q-1$ broken chain-vertices: form
$c_+$ loops each using one unbroken chain-vertex, and put these loops
and a root ring component at the $c_++1$ leaves of a binary tree whose $2c_+-1$
edges are separating broken chain-vertices. Finally add $q$ non-separating broken chain-vertices
and attach their two extremities to existing edges: each attachment
creates another broken chain-vertex (as it splits an existing one  into two), so
that $3q$ broken chain-vertices are added in total. The total number of broken chain-vertices is thus 
of $2c_++3q-1$.

\begin{proposition}
For any $\delta\geq1$, the dominant reduced schemes $\tilde G$ of degree $\delta$ are reduced schemes
with ${\bf B}_{\max}$ broken chains where  ${\bf B}_{\max}$ is the maximum of the 
integer linear program:
\[
2 c_+ +3 q-1, \textrm{ subject to the constraint } (D-2) c_+ +Dq \le \delta \;,
\]
and $\tilde G$ is such that:
\begin{itemize}
\item All the non root zero degree components in $\tilde G''$ have exactly 3 marks: they are either ring components with three marks, or 
consist in two vertices connected by $D+1$ parallel edges, three of which are marked.
\item The root component has one mark and degree 0: it is a root ring graph.
\item All the strictly positive degree components in $\tilde G''$ have one mark and degree
  $D-2$. They either represent an unbroken chain with equal external colors 
  and half-edges paired into edges $\Braket{\ell_{\bullet}, r_{\circ}}$ and $\Braket{r_{\bullet}, \ell_{\circ}}$, one of which is marked
  or they represent an unbroken chain with different external colors incident at a $(D-2)$-dipole whose third pair of half-edges is joint into a marked edge.
\item All the other elements of the schemes are ${\bf B}_{\max}$ broken chains.
\end{itemize}
\end{proposition}

Together with the asymptotic expansion~\eqref{eq:sing}, this Proposition implies Theorem \ref{thm:main2} in the Introduction.
The dominant reduced schemes can be seen as
abstract graphs whose edges represent the broken chains and whose vertices can
either be trivalent (representing the non-root zero degree components) or
univalent (representing root component and the non zero degree components).

In order to completely characterize these schemes we need to determine
${\bf B}_{\max}$. Optimizing the above linear program yields the parameters
of the candidate reduced schemes with a maximal number of broken
chain-vertices: the ``pure'' solutions are
\begin{eqnarray}
c^a_+ =\delta/(D-2) \;,\;\; q_a =0 & \Rightarrow & {\bf B}^a = 2\delta/(D-2)-1\\
c^b_+ = 0 \; , \;\; q_b=\delta/D & \Rightarrow & {\bf B}^b = 3\delta/D-1\\
\end{eqnarray}
Pure solutions are not realizable for values of $\delta$ that are
not divisible by $D$ or $D-2$, so that for $D\geq5$ mixed solutions
should be considered also. 

Let us describe the schemes in the pure cases:
\begin{itemize}
\item Case $a$, with $\delta=n\cdot(D-2)$, $n\ge 1$: since $q_a=0$, all the
  $2n-1$ broken chains are separating: the scheme is a binary tree
  with $n+1$ leaves, one carrying the root ring and the others
  carrying unbroken loops, with $n-1$ internal nodes each carrying a
  ring or a $(D-2)$-dipole, and with the $2n-1$ edges carrying the broken
  chains. The generating function of graphs associated to such schemes is obtained by counting: 
 \begin{itemize}
   \item the root ring contributes 1.
   \item counting the choices of colors, every non root leaf contributes:
   \[
   D U_{=;\genfrac{}{}{0pt}{}{\circ  \bullet }{   \bullet \circ } }(u)  + \binom{D}{2} u \left( u +   U_{\neq; \genfrac{}{}{0pt}{}{\circ  \circ }{   \bullet \bullet }  }(u)  \right) = \frac{D(D+1)}{2} \frac{u^2}{1-u^2} \; .
   \]
    \item counting the choices of the colors, every trivalent internal node contributes:
   \[
    1 + \binom{D}{2} u \;,
   \]
   where the $1$ corresponds to the case  of a ring component with three marks, and second term corresponds
   to the case of a $D-2$-dipole with all the half-edges matched into marked edges.
   \item  counting the choices of outgoing colors, the generating function for the separating chains with fixed incoming color is:
 \begin{align*}
&  B_{=;\genfrac{}{}{0pt}{}{\circ  \bullet }{   \bullet \circ } }(u)  
 +  B_{=; \genfrac{}{}{0pt}{}{\circ  \circ }{   \bullet \bullet }  }(u)
+  D B_{\neq; \genfrac{}{}{0pt}{}{\circ  \circ }{   \bullet \bullet }  }(u) 
+  D B_{\neq;\genfrac{}{}{0pt}{}{\circ  \bullet }{   \bullet \circ } }(u)  \crcr
& \qquad = \frac{  D(D-1) u^3 (1+Du ) + D (D-1)u^2(1+Du)}{(1-u^2)(1-D^2u^2)}  
=\frac{  D(D-1) u^2 }{(1-u )(1-D u )}  \; .
\end{align*}
 
 \end{itemize}
  The total contribution of these reduced schemes is:
  \begin{align*}
  & T(z)  \frac{1}{n } \binom{2n-2}{n-1}  \times \crcr
  & \times \left[ \left(   \frac{  D(D-1) u^2 }{(1-u )(1-D u )}  \right)^{2n-1} \left(  1 + \binom{D}{2} u   \right)^{n-1} 
  \left(   \frac{D(D+1)}{2} \frac{u^2}{1-u^2}   \right)^{n}  \right]_{u=zT(z)^{D+1}} \;,
  \end{align*}
  where the Catalan numbers count the choices of binary trees. Observe that for $n=1$  we recover the lollipop, but with the restriction that the vertical arm is a broken chain-vertex.
\item Case $b$, with $\delta=n\cdot D$: since $c^b_+=0$, there are no
  components of positive degree: the scheme is a graph with $3n-1$
  edges each carrying a broken chain-vertex, one node with degree one
  carrying the root ring and $2n-1$ nodes of degree 3 carrying
  either rings or $(D-2)$-dipoles. In particular this graph can be decomposed (in many
  ways) into a spanning tree with $2n-1$ edges and $n$ extra edges.
  The singular behavior of $G_{\tilde S}(u)$ for such a scheme $S$ is of the form: 
  \[
  \frac{1}{(1-Du)^{3n-1}}, \qquad(\delta=n\cdot D) \; .
  \]
 As the number of trivalent graphs grows super exponentially the
 corresponding series of dominant contributions is not summable in
 $n$.
\end{itemize}
In conclusion:
\begin{itemize}
\item For $D=3$, we get ${\bf B}^a=2\delta-1$ and ${\bf B}^b=\delta-1$ so $c^a_+ =
  \delta/(D-2), q_a = 0 $ gives the dominant contribution: for all
  $\delta>0$, the dominant schemes are the binary trees of Case $a$
  above.
\item For $D=4$, we get ${\bf B}^a=\delta -1 $ and ${\bf B}^b=3\delta/4-1$, so again
  $c^a_+ =\delta/(D-2)$ (hopefully here $\delta$ is always even)
  $q_a=0$ gives the dominant contributions and Case $a$ schemes
  dominate again, for all (even) $\delta>0$.
\item For $D=5$, we get ${\bf B}^a=2\delta/3-1$ and ${\bf B}^b=3\delta/5-1$, and
  binary trees again, but only for values of $\delta$ that are
  multiples of 3. For the other values of $\delta$, the dominant
  graphs have a lower ratio ${\bf B}/\delta$ (so that they do not appear in
  the scaling limit of next section).
\item For $D=6$, we get ${\bf B}^a=\delta/2-1$ and ${\bf B}^b=\delta/2-1$ so its a draw: all
  combinations are possible. The dominant graphs are both binary trees
  with loop leaves and 3-regular graphs, as well as mixed graphs.
\item For $D\geq 7$, we get ${\bf B}^a=2\delta/(D-2)-1<{\bf B}^b=3\delta/D-1$ so
  $q_b=\delta/D$ wins, and the dominant graphs are the rooted trivalent graphs.
\end{itemize} 

\subsection{Double scaling}

The Feynman amplitude of graphs (maps) in matrix models is $N^{2-2g}$,
where $g$ is the genus of the map.  In colored tensor models
\cite{1overN} the $(D+1)$-colored graphs come equipped with a
scaling in $N$, $N^{D - \delta}$ hence the leading behavior of the 
generating function of graphs with degree $\delta$ and ${\bf B}$
broken chains is $N^{D} \left[ N^{-\delta}  (z_0-z)^{ - \frac{ {\bf B} }{2} } \right]$.
In this context a natural question is whether the dominant terms of such a family of generating functions can be resummed. 

In the case $3\leq D\leq5$ the generating functions of colored graphs having
dominant reduced schemes can indeed be resummed to:
\begin{align*}
& T(z) \Bigg[ 1 + \sum_{n\ge 1}  \frac{1}{N^{(D-2)n}}
\frac{1}{n} \binom{2n-2}{n-1} \times    \crcr
& \qquad \qquad \times \left(   \frac{  D(D-1) u^2 }{(1-u )(1-D u )}  \right)^{2n-1} \left(  1 + \binom{D}{2} u   \right)^{n-1} 
  \left(   \frac{D(D+1)}{2} \frac{u^2}{1-u^2}   \right)^{n} \Bigg]\crcr
&=T(z) \Bigg[ 1  + \frac{1}{N^{D-2}}     \frac{  D(D-1) u^2 }{(1-u )(1-D u )}   \frac{D(D+1)}{2} \frac{u^2}{1-u^2}     \sum_{n\ge 0} \frac{1}{n+1} \binom{2n}{n} \times   \crcr
& \qquad \qquad \times \left[ \frac{1}{N^{D-2}} 
\left(    \frac{  D(D-1) u^2 }{(1-u )(1-D u )}  \right)^{2} \left(  1 + \binom{D}{2} u   \right)  \left(   \frac{D(D+1)}{2} \frac{u^2}{1-u^2}   \right) \right]^n \Bigg] \; ,
\end{align*}
and all the other generating functions of reduced schemes are either more suppressed in $1/N$ or less singular close to criticality.

Letting $N\to\infty$ and $z\to z_0$ while keeping $N^{D-2}(1-z/z_0) = x^{-1}$ fixed and large enough, the above sum over $n$ converges. 
For each such choice of $x$ we can define a non trivial limit distribution on the set of all rooted colored graphs with dominant reduced
schemes such that large schemes are favored in this distribution. 
The sum over dominant reduced schemes is:
\begin{align*}
 & T(z) \left\{1 +      
 \frac{
  1-\sqrt{1 - 4 \left[ \frac{1}{N^{D-2}} 
\left(    \frac{  D(D-1) u^2 }{(1-u )(1-D u )}  \right)^{2} \left(  1 + \binom{D}{2} u   \right)  \left(   \frac{D(D+1)}{2} \frac{u^2}{1-u^2}   \right) \right]}
 }{  2 \left[  
\left(    \frac{  D(D-1) u^2 }{(1-u )(1-D u )}  \right) \left(  1 + \binom{D}{2} u   \right)   \right]  }
 \right\} \crcr
& = T(z) \Bigg\{ 
 1 + \frac{ (1-u) (1-Du)}{ 2 D(D-1) u^2 \left( 1 + \binom{D}{2} u \right) } - \crcr
 & \qquad \qquad - \sqrt{  \frac{  (1-u)^2 (1-Du)^2 }{  4 D^2 (D-1)^2 u^4 \left(  1 + \binom{D}{2} u   \right)^2    }    
  - \frac{      \left(   \frac{D(D+1)}{2} \frac{u^2}{1-u^2}   \right)  }{N^{D-2}  \left(  1 + \binom{D}{2} u   \right)  }
 } 
\Bigg\}  \; .
\end{align*} 
Taking into account that for $z\to z_0$ we have:
\begin{align*}
&  T(z) \sim \frac{1+D}{D} - \sqrt{ 2 \frac{ D+1 }{D^3} } \sqrt{1- \frac{z}{z_0}} \;,  \quad u \sim \frac{1}{D}  - \sqrt{ 2 \frac{ D+1 }{D^3} } \sqrt{1- \frac{z}{z_0}} \crcr
& 1-Du \sim D \sqrt{ 2 \frac{ D+1 }{D^3 } } \sqrt{1- \frac{z}{z_0}} \;, 
\end{align*}
this becomes:
\[
 \frac{D+1}{D}  -   \sqrt{ 2  \frac{  D+1 }{D^3 }     }
 \sqrt{  1- \frac{z}{z_0}   -\frac{ D^2   }{N^{D-2} 2 (D-1)}
 } 
  +   \frac{2}{D^2}\sqrt{1- \frac{z}{z_0}}  \sqrt{  
  1- \frac{z}{z_0}   -\frac{1}{N^{D-2} } \frac{D^2}{2 ( D -1 )} 
 } 
 \; .
\]

Note that, as expected, we recover the singular behavior of $T(z)$ in the $N\to \infty$ limit. At finite $N$ the singularity is shifted
from $z_0$ to $z_1 = z_0 \bigl( 1-  \frac{D^2  } {N^{D-2}2(D-1)}   \bigr) <z_0$.
The new singularity governs the critical behavior of the double scaling series.

The double scaling regime identified in this paper must be studied
further.  The next step is to study the geometry of the resulting
continuum random space starting with its Hausdorff and spectral
dimensions. In the case $3\le D\le 5$, as the doubles scaling is summable, one can
attempt to iterate this procedure and construct a multiple scaling
limit in which a genuinely new random space is obtained.


\section*{Acknowledgements} 
GS thanks Vincent Rivasseau for bringing his attention to the
activities around melons in quantum gravity. Stephane Dartois and
Vincent Rivasseau are also thanked for interesting discussions on the
topic of this paper and the related \cite{ongoing}. The authors would like to thank two anonymous referees 
whose many detailed comments led to significant improvements of the manuscript.


\bibliographystyle{plain} 


\end{document}